\newcommand{\C}{{\mathbb C}}
\newcommand{\flushpar}{\par \noindent}
\newcommand{\dlog}{{\operatorname{Derlog}}}
\newcommand{\pd}[2]{\dfrac{\partial#1}{\partial#2}}
\def \bb {\mathbf {b}}
\def \bg {\mathbf {g}}
\def \bG {\mathbf {G}}
\def \bk {\mathbf {k}}
\def \bV {\mathbf {V}}
\def \b1 {\mathbf {1}}
\def \iti {\text{\it i}}
\def \itj {\text{\it j}}
\def \itm {\text{\it m}}
\def \cA {\mathcal{A}}
\def \cD {\mathcal{D}}
\def \cE {\mathcal{E}}
\def \cL {\mathcal{L}}
\def \cO {\mathcal{O}}
\def \cU {\mathcal{U}}
\def \cV {\mathcal{V}}
\def \gevar {\varepsilon}
\def \gl {\lambda}
\def \gs {\sigma}
\def \dim {{\rm dim}\,}
\def\C{\mathbb C}
\def\Sym{\mathrm{Sym}}
\def\Sk{\mathrm{Sk}}
\def\GL{\mathrm{GL}}
\def\SL{\mathrm{SL}}
\def\Diff{\mathrm{Diff}}
\def\Pf{\mathrm{Pf}}
\def\gl{\mathfrak{gl}}
\def\g{\mathfrak{g}}
\def\k{\mathfrak{k}}
\def\GL{\mathrm{GL}}
\numberwithin{equation}{section}
\title[Solvable Groups and Free Divisors]
{Solvable Groups, Free Divisors and Nonisolated Matrix Singularities I:
Towers of Free Divisors}
\author{\firstname{James} \lastname{Damon}}
\address{Department of Mathematics \\
University of North Carolina \\
Chapel Hill, NC 27599-3250 
(USA)}
\email{jndamon@math.unc.edu}
\thanks{JD was partially supported by the National Science Foundation grants
DMS-0706941 and DMS-1105470.}
\author{\firstname{Brian} \lastname{Pike}}
\address{Department of Computer and Mathematical Sciences \\
University of Toronto Scarborough \\
1265 Military Trail  \\
Toronto, ON M1C 1A4 
(Canada)}
\email{bapike@gmail.com}
\thanks{This paper contains work from BP's
Ph.D.~dissertation at Univ.~of North Carolina}
\keywords{%
prehomogeneous vector spaces, free divisors, linear free divisors,
determinantal varieties, Pfaffian varieties, solvable algebraic groups,
Cholesky-type factorizations, block representations, exceptional orbit 
varieties, infinite-dimensional solvable Lie algebras}
\subjclass[2010]{17B66 (Primary) 22E27, 11S90 (Secondary)}
\begin{document}
\begin{abstract}
We introduce a method for obtaining new classes of free divisors from 
representations $V$ of connected linear algebraic groups $G$ where $\dim 
G = \dim V$, with $V$ having an open orbit.  We give sufficient conditions 
that the complement of this open orbit, the \lq\lq exceptional orbit 
variety\rq\rq, is a free divisor (or a slightly weaker free* divisor) for 
\lq\lq block representations\rq\rq of both solvable groups and extensions 
of reductive groups by them.  These are representations for which the 
matrix defined from a basis of associated \lq\lq representation vector 
fields\rq\rq on $V$ has block triangular form, with blocks satisfying 
certain nonsingularity conditions.  
\par
For towers of Lie groups and representations this yields a tower of free 
divisors, successively obtained by adjoining varieties of singular 
matrices.  This applies to solvable groups which give classical 
Cholesky-type factorization, and a modified form of it, on spaces of 
$m \times m$ symmetric, skew-symmetric  or general matrices.  For 
skew-symmetric matrices, it further extends to representations of 
nonlinear infinite dimensional solvable Lie algebras. 
\end{abstract}

\begin{altabstract}
Nous introduisons une m\'ethode pour obtenir de nouvelles classes de 
diviseurs libres \`a partir des repr\'esentations $V$ de groupes 
alg\'ebriques lin\'eaires connexes $G$ pour lesquelles $\dim G = \dim V$ 
et $V$ a une orbite ouverte.  Nous donnons des conditions suffisantes pour 
lesquelles le compl\'ementaire de cette orbite ouverte, la \lq\lq 
vari\'et\'e des orbites exceptionelles\rq\rq, est une diviseur libre (ou un 
diviseur libre* plus faible) pour des \lq\lq repr\'esentations par 
blocs\rq\rq\, \`a la fois des groupes solvables et des extensions des 
groupes r\'eductifs par ces groupes.  Ce sont des repr\'esentations pour 
lesquelles la matrice d\'efinie \`a partir d\rq une base des \lq\lq champs 
des vecteurs associ\'es \rq\rq de la repr\'esentation $V$, a une forme 
triangulaire bloc et les blocs satisfont certaines conditions de 
non-singularit\'e.  
\par
Pour les tours des groupes de Lie et leurs repr\'esentations ce r\'esultat  
donne une tour de diviseurs libres obtenue en avoisinant successivement 
des vari\'et\'es de matrices singuli\`eres.  Il s\rq applique aux groupes 
solvables qui donnent la factorisation classique du type Cholesky, et une 
forme modifi\'ee de celle ci, sur les espaces des matrices $m \times m$
 sym\'etriques, antisym\'etriques, ou g\'en\'erales.  Pour les matrices 
antisym\'etriques, il s\rq \'etend aussi aux repr\'esentations des 
alg\`ebres de Lie solvables et non-lin\'eaires de dimension infinie. 
\end{altabstract}

\maketitle

\section*{Introduction}
\label{S:sec0}
\par
In this paper and part II \cite{DP}, we introduce a method for computing
the \lq\lq vanishing topology\rq\rq of nonisolated matrix singularities.  A
matrix singularity arises from a holomorphic germ $f_0 : \C^n, 0 \to M, 0$,
where $M$ denotes a space of matrices. If $\cV \subset M$ denotes the
variety of singular matrices, then we require that $f_0$ be transverse to
$\cV$ off $0$ in $\C^n$.  Then, $V_0 = f_0^{-1}(\cV)$ is the corresponding
matrix singularity.  Matrix singularities have appeared prominently in the
Hilbert--Burch theorem \cite{Hi}, \cite{Bh} for the representation of
Cohen--Macaulay singularities of codimension $2$ and for their
deformations by Schaps \cite{Sh}, by Buchsbaum-Eisenbud \cite{BE} for
Gorenstein singularities of codimension $3$, and in the defining support
for Cohen-Macaulay modules, see e.g. Macaulay\cite{Mc} and 
Eagon-Northcott \cite{EN}.  Considerable recent work has concerned the 
classification of various types of matrix singularities, including Bruce 
\cite{Br}, Haslinger \cite{Ha}, Bruce--Tari \cite{BrT}, and 
Goryunov--Zakalyukin \cite{GZ} and for Cohen--Macaulay singularities by 
Fr\"{u}hbis-Kr\"{u}ger--Neumer \cite{Fr} and \cite{FN}.\par
The goal of this first part of the paper is to use representation theory for 
connected solvable linear algebraic groups to place the variety of singular 
matrices in a geometric configuration of divisors whose union is a free 
divisor.  In part two, we then show how to use the resulting geometric 
configuration and an extension of the method of L\^e-Greuel \cite{LGr} to 
inductively compute the \lq\lq singular Milnor number\rq\rq of the matrix 
singularities  in terms of a sum of lengths of determinantal modules 
associated to certain free divisors (see \cite{DM} and \cite{D1}).  This 
will lead, for example, in part II to new formulas for the Milnor numbers 
of Cohen-Macaulay surface singularities.  Furthermore, the free divisors 
we construct in this way are distinguished topologically by both their 
complements and Milnor fibers being $K(\pi, 1)$\rq s \cite{DP2}.\par
In this first part of the paper, we identify a special class of 
representations of linear algebraic groups (especially solvable groups) 
which yield free divisors.  Free divisors arising from representations are 
termed \lq\lq linear free divisors\rq\rq by Mond, who with Buchweitz 
first considered  those that arise from representations of reductive 
groups using quivers of finite type \cite{BM}.  While reductive groups and 
their representations (which are completely reducible) are classified, this 
is not the case for either solvable linear algebraic groups nor their 
representations (which are not completely reducible).  We shall see that 
this apparent weakness is, in fact, an advantage.
\par
We consider an equidimensional (complex) representation of a connected 
linear algebraic group $\rho : G \to \GL(V)$, so that 
$\dim G = \dim V$, 
and for which the representation has an open orbit $\cU$.  Then, the
complement $\cE = V \backslash \cU$, the \lq\lq exceptional orbit
variety\rq\rq, is a hypersurface formed from the positive codimension
orbits.  We introduce the condition that the representation is a 
\lq\lq block representation\rq\rq, which is a refinement of the 
decomposition arising from the Lie-Kolchin theorem for solvable linear 
algebraic groups.  This is a representation for which the matrix 
representing a basis of associated vector fields on $V$ defined by the 
representation, 
using a basis for $V$, can be expressed  as a block triangular matrix, with 
the blocks satisfying certain nonsingularity conditions.  We use the Lie 
algebra structure of $G$ to identify the blocks and obtain a defining 
equation for $\cE$.  \par
In Theorem \ref{BlkRepFrDiv} we give a criterion that such a block
representation yields a linear free divisor and for a slightly weaker
version, we still obtain a free* divisor structure (where the exceptional
orbit variety is defined with nonreduced structure, see \cite{D3}).  We 
shall see more
generally that the result naturally extends to \lq\lq towers of
groups acting on a tower of representations\rq\rq to yield a tower of free
divisors in Theorem \ref{Towerthm}.  This allows us to inductively place
determinantal varieties of singular matrices within a free divisor by
adjoining a free divisor arising from a lower dimensional representation.
\par
We apply these results to representations of solvable linear algebraic 
groups associated to Cholesky-type  factorizations for the different types 
of complex matrices.  We show in  Theorem \ref{CholTow} that the 
conditions for the existence of 
Cholesky-type factorizations for the different types of complex matrices 
define the exceptional orbit varieties which are either
free divisors or free* divisors.  For those cases with only free* divisors,
we next introduce a modified form of Cholesky factorization which 
modifies the solvable groups to obtain free divisors still containing the
varieties of singular matrices.  This method extends to factorizations for 
$(n-1) \times n$ matrices (Theorem \ref{ModCholRep}).  \par
A new phenomena arises in \S \ref{S:NonlinLie} for skew-symmetric 
matrices.  We introduce a modification of a block representation which 
applies to infinite dimensional nonlinear solvable Lie algebras.  Such 
algebras are examples of \lq\lq holomorphic solvable Lie algebras\rq\rq 
not generated by finite dimensional solvable Lie algebras.  We again prove 
in Theorem \ref{Thm.nonlinLiealg} that the exceptional orbit varieties for 
these block representations are free divisors.  \par 
Moreover, in \S \ref{S:OpBLkReps} we give three operations on block 
representations which again yield block representations: quotient, 
restriction, and extension.  In \S \ref{S:RestrBlocReps} the restriction and 
extension operations are applied to block representations obtained from 
(modified) Cholesky-type factorizations to obtain auxiliary block 
representations which will play an essential role in part II in computing 
the vanishing topology of the matrix singularities.  \par
The representations we have considered so far for matrix singularities 
are induced from the simplest representations of $\GL_m(\C)$.  These 
results will as well apply to representations of solvable linear algebraic 
groups obtained by restrictions of representations of reductive groups to 
solvable subgroups and extensions by solvable groups.  These results are 
presently under investigation.  \par
We wish to thank the referee for the careful reading of the paper and the 
number of useful suggestions which he made.  

\section{Preliminaries on Free Divisors Arising from Representations of 
Algebraic Groups}
\label{S:sec1}
\par
Our basic approach uses 
hypersurface germs $\cV, 0 \subset \C^p, 0$ that are free divisors in the 
sense of Saito \cite{Sa}, and his corresponding criteria.
\subsection*{Free Divisors and Saito's Criteria} \hfill 
\par  
Quite generally if $I(\cV)$ is the defining ideal for a hypersurface germ 
$\cV, 0 \subset \C^p, 0$, we let
$$  \dlog(\cV) \,\, = \,\, \{ \zeta \in \theta_p :  \makebox{ such that }
\zeta (I(\cV)) \subseteq I(\cV) \}  $$
where $\theta_p$ denotes the module of germs of holomorphic vector 
fields on $\C^p, 0$.  Saito \cite{Sa} defines $\cV$ to be a {\it free divisor} 
if $\dlog(\cV)$ is a free $\cO_{\C^p, 0}$-module (necessarily of rank $p$).  
\par
Saito also gave two fundamental criteria for establishing that a 
hypersurface germ $\cV, 0 \subset \C^p, 0$ is a free divisor.  Suppose 
$\zeta_{i} \in \theta_p$ for $i = 1, \dots , p$.  Then, for coordinates 
$(y_1, \dots , y_p)$ for $\C^p, 0$, we may write a basis

\begin{equation}
\label{Eqn1.7}
     \zeta_{i} \,\, =  \,\, \sum_{j = 1}^{p} a_{j, i} \pd{ }{y_j}   \qquad  i = 1,
\dots ,p
\end{equation}
with $a_{j, i} \in \cO_{\C^p, 0}$.
We refer to the $p \times p$ matrix $A = (a_{j, i})$ as a {\it coefficient
matrix} or \emph{Saito matrix} for the vector fields $\{
\zeta_{i}\}$, and we call the determinant $\det(A)$ the {\it coefficient 
determinant}.  
\par
A sufficient condition that $\cV, 0$ is a free divisor is given by 
Saito\rq s criterion \cite{Sa} which has two forms.
\begin{Thm}[Saito's criterion] \hfill
\label{Saitocrit}
\begin{enumerate}
\item
The hypersurface germ $\cV, 0 \subset \C^p, 0$ is a free divisor if there
are $p$ elements $\zeta_1, \dots , \zeta_p \in \dlog(\cV)$ and a basis
$\{ w_j\}$ for $\C^p$ so that the coefficient matrix $A = (a_{i\, j})$ has
determinant which is a reduced defining equation for $\cV, 0$.
Then, $\zeta_1, \dots , \zeta_p$ is a free module basis for $\dlog(\cV)$.
\par
Alternatively,
\item Suppose the set of vector fields $\zeta_1,\dots ,
\zeta_p$ is closed under Lie bracket, so that for all $i$ and $j$
$$  [ \zeta_i, \zeta_j ]  \,\, = \,\, \sum_{k = 1}^{p} h_k^{(i, j)} \zeta_k  $$
for $h_k^{(i, j)} \in \cO_{\C^p, 0}$.  If the coefficient determinant is a 
reduced defining equation for a hypersurface germ $\cV, 0$, then $\cV, 0$ 
is a free divisor and $\zeta_1,\dots , \zeta_p$ form a free module basis 
of $\dlog(\cV)$.
\end{enumerate}
\end{Thm}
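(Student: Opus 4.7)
The plan for Part (1) is to exploit Cramer's rule. Given $\eta \in \dlog(\cV)$ written as $\eta = \sum_j b_j \pd{ }{y_j}$, I want to express $\eta = \sum_i c_i \zeta_i$ with $c_i \in \cO_{\C^p,0}$, i.e., to solve $Ac = b$. Cramer's rule produces candidates $c_i = \det(A^{(i)})/h$, where $A^{(i)}$ is $A$ with its $i$-th column replaced by $b$ and $h$ is the reduced defining equation of $\cV$. The heart of the matter is to show that $h \mid \det(A^{(i)})$.

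The key geometric input is this: at any smooth point $x$ of $\cV$, every $\zeta \in \dlog(\cV)$ evaluates to a vector in $T_x \cV$, because $\zeta(h) \in (h)$ forces $\zeta(h)(x) = 0$. Hence the $p$ vectors formed by substituting $\eta$ for $\zeta_i$ all lie in the $(p-1)$-dimensional tangent space $T_x\cV$, so $\det(A^{(i)})$ vanishes on $\cV_{\mathrm{smooth}}$ and therefore on all of $\cV$ by closure. Reducedness of $h$ then forces $h\mid \det(A^{(i)})$, making the $c_i$ holomorphic. The $\zeta_i$ form a basis rather than merely a generating set because $h$ is not a zero-divisor in $\cO_{\C^p,0}$: any relation $Ac=0$ forces $c=0$ on the complement of $\cV$, hence everywhere.

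For Part (2) the plan is to reduce to Part (1) by verifying a priori that each $\zeta_i$ already lies in $\dlog(\cV)$, where now $h := \det(A)$. I would compute $\zeta_i(h)$ by expanding the determinant $h = \det[\zeta_1\mid\cdots\mid\zeta_p]$ column by column. The componentwise action $\zeta_i(\zeta_k)$ on the $k$-th column can be rewritten as $\zeta_k(\zeta_i) + [\zeta_i,\zeta_k]$, and the Lie bracket hypothesis lets one expand $[\zeta_i,\zeta_k] = \sum_l h_l^{(i,k)}\zeta_l$. The bracket contribution yields determinants that either have two equal columns (hence vanish) or collapse to a scalar multiple of $h$. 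The residual piece $\sum_k \det[\cdots\mid\zeta_k(\zeta_i)\mid\cdots]$ is handled on the open set $\{h\neq 0\}$: cofactor expansion combined with the tautology $A^{-1}A = I$, read as an identity among vector fields, rewrites the sum as $h$ times the divergence $\sum_j \pd{a_{j,i}}{y_j}$. Both sides being holomorphic, the identity extends across $V(h)$, yielding $\zeta_i(h)\in (h)$; Part (1) then concludes.

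The main obstacle will be the determinantal manipulation in Part (2). One must carefully distinguish the componentwise derivation $\zeta_i(\zeta_k)$ from the Lie bracket $[\zeta_i,\zeta_k]$, track which derivation acts on which column after the swap, and spot the identity $\sum_k (A^{-1})_{k,j}\zeta_k = \pd{ }{y_j}$ that forces the ``wrong derivation'' sum to factor through $h$. By contrast, Part (1) is largely conceptual once the tangent-space picture of $\dlog$ is in place; its only subtlety is that the Cramer quotients be genuinely holomorphic, which is precisely where reducedness of $h$ enters.
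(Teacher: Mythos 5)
The paper does not prove this theorem; it is stated as a cited result from Saito \cite{Sa}, so there is no in-paper argument to compare against. Your reconstruction is correct and is essentially the standard (Saito's own) proof. In Part (1) the two uses of reducedness are exactly right: at a smooth point $x$ of $\cV$ one has $dh(x)\neq 0$, so $\ker dh(x)=T_x\cV$ is $(p-1)$-dimensional and contains every evaluated logarithmic vector field, forcing $\det A^{(i)}$ to vanish on $\cV$; and reducedness again (via the local analytic Nullstellensatz) promotes this vanishing to the divisibility $h\mid\det A^{(i)}$. In Part (2) your column-by-column expansion is the right device: substituting $\zeta_i(\zeta_k)=\zeta_k(\zeta_i)+[\zeta_i,\zeta_k]$, the bracket terms contribute $\bigl(\sum_k h^{(i,k)}_k\bigr)h$, while the residual sum $\sum_k\det[\cdots\mid\zeta_k(\zeta_i)\mid\cdots]$ equals, by cofactor expansion together with $AA^{-1}=I$, exactly $h\sum_j\pd{a_{j,i}}{y_j}$; the identity is derived on $\{h\neq 0\}$ but, being an equality of holomorphic germs, persists everywhere, giving $\zeta_i(h)\in(h)$, i.e.\ $\zeta_i\in\dlog(\cV)$, after which Part (1) finishes.
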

\par
We make several remarks regarding the definition and criteria.  First, in 
the case of a free divisor $\cV, 0$, there are two choices of bases 
involved in the definition, the basis $\pd{ }{y_i}$ and the set of generators 
$\zeta_1, \dots , \zeta_p$.  Hence the coefficient matrix is highly 
nonunique.  However, the coefficient determinant is well-defined up to 
multiplication by a unit as it is a generator for the $0$-th Fitting ideal of 
the quotient module $\theta_p/\dlog(\cV)$.  
Second, $\dlog(\cV)$ is more than a just finitely generated module over 
$\cO_{\C^p, 0}$; it is also a Lie algebra. However, with the exception of 
the $\{\zeta_i\}$ being required to be closed under Lie bracket in the 
second criteria, the Lie algebra structure of $\dlog(\cV)$ does not enter 
into consideration.  \par
In Saito\rq s second criterion, if we let $\cL$ denote the 
$\cO_{\C^p, 0}$--module generated by $\{ \zeta_i, i = 1, \dots , p\}$, then 
$\cL$ is also a Lie algebra.  More generally we shall refer to any finitely 
generated $\cO_{\C^p, 0}$--module $\cL$ which is also a Lie algebra as a 
{\it (local) holomorphic Lie algebra}.  We will consider holomorphic Lie 
algebras defined for certain distinguished classes of representations of 
linear algebraic groups and use the Lie algebra structure to show that the 
coefficient matrix has an especially simple form.  \par  
\subsection*{Prehomogeneous Vector Spaces and Linear Free Divisors} 
\hfill  
\par
Suppose that $\rho : G \to \GL(V)$ is a rational representation of a 
connected complex linear algebraic group.  If there is an open orbit $\cU$ 
then such a space with group action is called a prehomogeneous vector 
space and 
has been studied by Sato and Kimura \cite{So}, \cite{SK}, \cite{K} but 
from the point 
of view of harmonic analysis.  They have effectively determined the 
possible prehomogeneous vector spaces arising from irreducible
representations of reductive groups.  \par 
If $\g$ denotes the Lie algebra of $G$, then for each $X \in \g$, there is a 
vector field on $V$ defined by 
\begin{equation}
\label{Eqn1.6}
  \xi_{X}(v) \,\, = \,\,  \pd{ }{t}(\exp(t\cdot X)\cdot v)_{| t = 0} \qquad 
\makebox{ for } v \in V \, .
\end{equation}
In the case $\dim G = \dim V = n$, Mond observed that if $\{ X_i\}_{i = 
1}^{n}$ is a basis of the Lie algebra $\g$ and the coefficient matrix of 
these vector fields with respect to coordinates for $V$ has reduced 
determinant, then Saito\rq s criterion can be applied to conclude $\cE = V 
\backslash \cU$ is a free divisor with $\dlog(\cE)$ generated by the $\{ 
\xi_{X_i}, i = 1, \dots , n\}$.  This idea was applied by Buchweitz--Mond to 
reductive groups arising from quiver representations of finite type 
\cite{BM}, more general quiver representations in \cite{GMNS} and
\cite{GMS}, and
irreducible representations of reductive groups in \cite{GMS}. 
In the 
case that $\cE$ is a free divisor, we follow Mond and call it a {\it linear 
free divisor}.  \par
We shall call a representation with $\dim G = \dim V$ an {\it 
equidimensional representation}.  Also, the variety $\cE = V \backslash 
\cU$ has been called the singular set or discriminant.  We shall be 
considering in part II mappings into $V$, which also have singular sets and 
discriminants.  To avoid confusion, we shall refer to $\cE$, which is the 
union of the orbits of positive codimension, as the {\it exceptional orbit 
variety}.  
\begin{Remark}
In the case of an equidimensional representation with open orbit, if there
is a basis $\{ X_i\}$ for $\g$ such that the determinant of the coefficient
matrix defines $\cE$ but with nonreduced structure, then we refer to
$\cE$ as being a {\em linear free* divisor}.  A free* divisor structure can 
still be used for determining the topology of nonlinear sections as is done 
in \cite{DM}, except correction terms occur due to the presence of 
``virtual singularities'' (see \cite{D3}).  However, by \cite{DP2}, the 
free* divisors that occur in this paper will have complements and Milnor 
fibers with the same topological properties as free divisors.
\end{Remark}
\par
In contrast with the preceding results, we shall be concerned with 
non-reductive groups, and especially connected solvable linear algebraic 
groups.  The representations of such groups $G$ cannot be classified as in 
the reductive case.  Instead, we will make explicit use of the Lie algebra 
structure of the Lie algebra $\g$ and special properties of its 
representation on $V$.  We do so by identifying it with its image in 
$\theta(V)$, which denotes the $\cO_{V, 0}$-module of germs of 
holomorphic vector fields on $V, 0$, which is also a Lie algebra.  We will 
view it as the Lie algebra of the group $\Diff(V, 0)$ of germs of 
diffeomorphisms of $V, 0$, even though it is not an infinite dimensional 
Lie group in the usual sense.  \par
 Let $\xi \in \itm \cdot \theta(V)$, with 
$\itm$ denoting the maximal ideal of $\cO_{V, 0}$.  Integrating $\xi$ 
gives a local one-parameter group of diffeomorphism germs $\varphi_t : 
V, 0 \to V, 0$ defined for $| t | < \gevar$ for some $\gevar > 0$,  which 
satisfy $\pd{\varphi_t}{t} = \xi\circ \varphi_t$ and
$\varphi_0 = id$.  Because we are only interested in germs for $t$ near 
$0$, we only need to consider the \lq\lq exponential map\rq\rq defined in 
terms of local one--parameter subgroups:
\begin{equation*}
\exp_{\xi} : (-\epsilon,\epsilon) \to \Diff (V, 0)   \qquad \makebox{ where 
} \quad \exp_\xi(t) = \varphi_{t} \, .
\end{equation*}
\par  
Second, we have the natural inclusion $\iti : \GL(V) \hookrightarrow 
\Diff(V, 0)$, where a linear transformation $\varphi$ is viewed as a germ 
of a diffeomorphism of $V, 0$.  There is a corresponding map
\begin{align}
    \tilde \iti :  \gl(V) &\longrightarrow  \itm \cdot \theta(V)      \\
A &\mapsto \xi_A   \notag
\end{align}
where the $\xi_A (v) = A(v)$ are 
\emph{linear vector fields}, whose coefficients are linear functions.  
Then, 
$\tilde \iti$ is a bijection between $\gl(V)$ and the subspace of linear 
vector fields.  A straightforward calculation shows that $\tilde \iti$ is a 
Lie algebra homomorphism provided we use the negative of the usual Lie 
bracket for $\itm \cdot \theta(V)$. \par
Given a representation $\rho : G \to \GL(V)$ of a (complex) connected  
linear algebraic group $G$ with associated Lie algebra homomorphism 
$\tilde{\rho}$, there is the following commutative exponential diagram. 
\par
\vspace{1ex}
\flushpar
{\it Exponential Diagram for a Representation} \par
\begin{equation}
\label{CD.assoc.vfs}
\begin{CD}
{\g} @>{\tilde{\rho}}>>  { \gl(V) } @>{\tilde{\iti}}>>
{\itm \cdot \theta(V)}\\
@V{\exp}VV     @V{\exp}VV      @V{\exp}VV   \\
{G}  @>{\rho}>>  {\GL(V)}  @>{\iti}>>  {\Diff(V, 0)}
\end{CD}
\end{equation}
\flushpar
where the exponential map for Lie groups is also viewed as a map to local 
one-parameter groups $X \mapsto \exp(t\cdot X)$.
\par
If $\rho$ has finite kernel, then $\tilde \rho $ is injective.  Even though it
is not standard, we shall refer to such a representation as a {\it faithful
representation}, as we could always divide by the finite group and obtain
an induced representation which is faithful and does not alter the
corresponding Lie algebra homomorphisms.  Hence, $\tilde \iti \circ \tilde
\rho$ is an isomorphism from $\g$ onto its image, which we shall denote
by $\g_V$.  \par
Hence, $\g_V \subset \itm \cdot \theta(V)$ has exactly the same Lie
algebra theoretic properties as $\g$.  For $X \in \g$, we slightly abuse 
notation by more simply denoting $\xi_{\tilde \rho (X)}$ by $\xi_{X} \in 
\g_V$, which we refer to as the associated {\it representation vector 
fields}.  
The $\cO_{V, 0}$ --module generated by $\g_V$ is a holomorphic Lie 
algebra which has as a set of generators $\{ \xi_{X_i}\}$, as $X_i$ varies 
over a basis of $\g$.  Saito\rq s criterion applies to the $\{ \xi_{X_i}\}$; 
however, we shall use the correspondence with the Lie algebra properties 
of $\g$ to deduce the properties of the coefficient matrix.  \flushpar
{\bf Notational Convention: } We will denote vectors in the Lie algebra 
$\g$ by $X_i$ and vectors in the space $V$ by either $u_i$, $v_i$, or 
$w_i$.  \par

\flushpar
\subsection*{Naturality of the Representation Vector Fields}  \hfill
\par
The naturality of the exponential diagram leads immediately to the 
naturality of the construction of representation
vector fields.  Let $\rho : G \to \GL(V)$ and $\rho^{\prime} : H \to \GL(W)$ 
be
representations of linear algebraic groups.  Suppose there is a Lie group
homomorphism $\varphi : G \to H$  and a linear transformation
$\varphi^{\prime} : V \to W$ such that when we view $W$ as a $G$
representation via $\varphi$, then $\varphi^{\prime}$ is a homomorphism
of $G$-representations.  We denote this by saying that $\Phi= (\varphi,
\varphi^{\prime}) : (G, V) \to (H, W)$ is {\it homomorphism of groups and
representations}.
\begin{Proposition}
\label{Prop1.9}
The construction of representation vector fields is natural in the sense
that if $\Phi= (\varphi, \varphi^{\prime}) : (G, V) \to (H, W)$ is a
homomorphism of groups and representations, then for any $X \in \g$,
the representation vector fields $\xi_X$ for $G$ on $V$ and $\xi_{\tilde
\varphi(X)}$ for $H$ on $W$ are $\varphi^{\prime}$--related.
\end{Proposition}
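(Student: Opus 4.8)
The plan is to verify the defining relation of $\varphi^{\prime}$-relatedness directly from formula~\eqref{Eqn1.6}, using the naturality built into the exponential diagram~\eqref{CD.assoc.vfs}. Recall that vector fields $\xi$ on $V$ and $\eta$ on $W$ are $\varphi^{\prime}$-related precisely when $d\varphi^{\prime}_x(\xi(x)) = \eta(\varphi^{\prime}(x))$ for every $x \in V$; since $\varphi^{\prime} : V \to W$ is linear, $d\varphi^{\prime}_x = \varphi^{\prime}$ at each point, so the statement to be proved reduces to the identity $\varphi^{\prime}(\xi_v(x)) = \xi_{\tilde\varphi(v)}(\varphi^{\prime}(x))$ for all $v \in \g$ and $x \in V$.

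First I would fix $v$ and $x$ and apply $\varphi^{\prime}$ to $\xi_v(x) = \pd{ }{t}(\exp(tv)\cdot x)_{|t=0}$. Since $\varphi^{\prime}$ is linear it commutes with the $t$-derivative, so $\varphi^{\prime}(\xi_v(x)) = \pd{ }{t}\bigl(\varphi^{\prime}(\exp(tv)\cdot x)\bigr)_{|t=0}$. Next, viewing $W$ as a $G$-representation through $\varphi$ and using that $\varphi^{\prime}$ is a homomorphism of $G$-representations, for each $g \in G$ one has $\varphi^{\prime}(g\cdot x) = g\cdot\varphi^{\prime}(x) = \rho^{\prime}(\varphi(g))(\varphi^{\prime}(x))$. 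Taking $g = \exp(tv)$ and invoking the commutativity of the exponential diagram for the Lie group homomorphism $\varphi$, namely $\varphi(\exp(tv)) = \exp(t\,\tilde\varphi(v))$, gives $\varphi^{\prime}(\exp(tv)\cdot x) = \exp(t\,\tilde\varphi(v))\cdot\varphi^{\prime}(x)$. Differentiating this at $t = 0$ and comparing with~\eqref{Eqn1.6} for $H$ acting on $W$ yields $\varphi^{\prime}(\xi_v(x)) = \xi_{\tilde\varphi(v)}(\varphi^{\prime}(x))$, which is exactly the asserted $\varphi^{\prime}$-relatedness.

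I do not expect a genuine obstacle here: the argument is a formal chase through the definitions, and its one substantive ingredient — the equality $\varphi\circ\exp = \exp\circ\tilde\varphi$ of Lie group exponentials — is precisely the naturality recorded in diagram~\eqref{CD.assoc.vfs}. The only points needing care are notational: keeping the identification of $W$ as a $G$-module via $\varphi$ straight, and making sure $\tilde\varphi$ denotes the induced Lie algebra homomorphism $\g \to \h$ so that $\xi_{\tilde\varphi(v)}$ really is the representation vector field on $W$ attached to $\tilde\varphi(v)$. As an alternative one could argue entirely at the linear level: $\xi_v$ is the linear vector field $x\mapsto\tilde\rho(v)(x)$ and $\xi_{\tilde\varphi(v)}$ the one $y\mapsto\widetilde{\rho^{\prime}}(\tilde\varphi(v))(y)$, so $\varphi^{\prime}$-relatedness becomes the intertwining identity $\varphi^{\prime}\circ\tilde\rho(v) = \widetilde{\rho^{\prime}}(\tilde\varphi(v))\circ\varphi^{\prime}$, which is just the differential at the identity of the statement that $\varphi^{\prime}$ is a map of $G$-representations.
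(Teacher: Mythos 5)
Your proof is correct and follows essentially the same chain of equalities as the paper's: pull $\varphi'$ through the $t$-derivative, use $G$-equivariance of $\varphi'$ to move the group action across, replace $\varphi(\exp(tv))$ by $\exp(t\tilde\varphi(v))$ via the exponential diagram, and read off the result. The alternative linear-level intertwining argument you sketch at the end is a nice additional viewpoint but isn't in the paper.
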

\begin{proof}
By (\ref{Eqn1.6}), for $v \in V$
\begin{align}
\label{Eqn1.8}
  d \varphi^{\prime}_v(\xi_{X}(v)) \,\, &= \,\,  \pd{
}{t}(\varphi^{\prime}(\exp(t\cdot X)\cdot v))_{| t = 0}
\,\, = \,\,  \pd{ }{t}(\varphi(\exp(t\cdot X)) \cdot \varphi^{\prime}(v))_{| t 
= 0}
\notag  \\
&= \,\,  \pd{ }{t}(\exp(t\cdot \tilde\varphi(X)) \cdot 
\varphi^{\prime}(v))_{| 
t = 0}
\,\, = \,\,  \xi_{\tilde \varphi(X)}(\varphi^{\prime}(v)) .
\end{align}
Hence, $\xi_{X}$ and $\xi_{\tilde \varphi(X)}$ are
$\varphi^{\prime}$--related as asserted.
\end{proof}
\par

\section{Block Representations of Linear Algebraic Groups}
\label{S:BlkReps}
We consider representations $V$ of connected linear algebraic groups $G$ 
which need not be reductive.  These may not be completely reducible; 
hence, there may be invariant subspaces $W \subset V$ without invariant 
complements.  It then follows that we may represent the elements of $G$ 
by block upper triangular matrices; however, importantly, it does not 
follow that the corresponding coefficient matrix for a basis of 
representation vector fields need be block triangular nor that the diagonal 
blocks need be square.  \par
There is a condition which we identify, which will lead to this stronger 
property and be the  basis for much that follows.  To explain it, we first 
examine the form of the representation vector fields for $G$.  We 
choose a basis for $V$ formed from a basis $\{w_i\}$ for the invariant 
subspace $W$ and 
a complementary basis $\{ u_j\}$ to $W$.  
\begin{Lemma}
\label{Lem4.1}
In the preceding situation, 
\begin{itemize}
\item[i)] any representation vector field  $\xi_X \in \g_V$ has
the form
\begin{equation}
\label{Eqn4.1}
  \xi_X \,\,  =  \,\, \sum_{\ell} b_{\ell} u_{\ell} \, + \,  
\sum_{j} a_{j} w_j
\end{equation}
where $a_{j} \in \cO_{V, 0}$ and $b_{\ell} \in \pi^*\cO_{V/W, 0}$
for $\pi : V \to V/W$ the natural projection;
\item[ii)]
if $G$ is connected, the representation of $G$ on $V/W$ is the trivial 
representation if and only if for each $\xi_{X} \in \g_V$, the coefficients 
$b_{\ell} = 0$ in \eqref{Eqn4.1}.  
\end{itemize}
\end{Lemma}
\begin{proof}
First, we know $( id, \pi) : (G, V) \to (G, V/W)$ is a homomorphism of
groups and representations.  By Proposition \ref{Prop1.9},
the representation vector fields $\xi_X$ on $V$ and
$\xi^{\prime}_X$ on $V/W$ for $X \in \g$  are $\pi$--related.  Hence, for 
i), the representation vector field $\xi^{\prime}_X$ on $V/W$ has the form 
of the first sum on the RHS of (\ref{Eqn4.1}).  The coefficients for the 
$w_j$ will be function germs in $\cO_{V, 0}$.  \par
For ii), if $G$ acts trivially on $V/W$ then for $X \in \g$, $\xi_X$ on $V$ 
is $\pi$-related to $\xi^{\prime}_X$ on $V/W$, whose one parameter 
subgroup is the identity.  Hence, $\xi^{\prime}_X$ on $V/W$ is $0$, so the 
$b_{\ell} = 0$.  Conversely, if each $\xi_X$ on $V$ has the form 
\eqref{Eqn4.1} with the coefficients $b_{\ell} = 0$, then $\xi_X$ is 
$\pi$-related to $\xi^{\prime}_X = 0$.  Thus, the one parameter group 
generated by $X$ on $V/W$ is the identity.  As this is true for all $X \in 
\g$, it follows that the exponential map has image in the identity 
subgroup.  Thus, a neighborhood of the identity of $G$ acts trivially on 
$V/W$, hence so does $G$ by the connectedness of $G$.
\end{proof}
\par
Next we introduce a definition.
\begin{Definition}
Let $G$ be a connected linear algebraic group which acts on $V$ and which 
has a $G$--invariant subspace $W \subset V$ with $\dim W = \dim G$ 
such that $G$ acts trivially on $V/W$.  We say that $G$ has a 
{\em relatively open orbit in $W$} if there is an orbit of $G$ in $V$ whose 
generic projection onto $W$ is Zariski open.
\end{Definition}
\par
This condition can be characterized in terms of the representation vector
fields of $G$.  We choose a basis $\{ \xi_{X_i} : i = 1, \dots , k\}$ for
$\g_V$, with $k=\dim(W)=\dim(G)$.
Then, as $G$ acts trivially on $V/W$, by Lemma \ref{Lem4.1} it
follows that we can write 
\begin{equation}
\label{Eqn4.3}
 \xi_{X_i} \,\,  =  \,\,  \sum_{j} a_{j i} w_j
\end{equation}
where $a_{j i} \in \cO_{V, 0}$.  We refer to the matrix $(a_{j i})$ as a {\it
relative coefficient matrix} for $G$ and $W$.  We also refer to $\det(a_{j
i})$ as the {\it relative coefficient determinant for $G$ and $W$}.\par
We note that the composition of the projection $V\to W$ with the
orbit map $G\to G\cdot v$
is a rational map.
Since $G$ acts trivially on $V/W$, $G\cdot v\subseteq v+W$.
Then, $G$ has a relatively open orbit in $W$ 
if and only if $G$ has an open orbit in $v+W$ for some $v\in V$.  Since the 
orbit through $v$ has tangent space spanned by the set $\{ \xi_{X_i}(v) : i 
= 1, \dots , k\}$, the image is Zariski open if and only if $\det(a_{j i})$ 
is nonzero at $v$.
We conclude
\begin{Lemma}
\label{Lem4.2}
The action of $G$ on $V$ has a relatively open orbit in $W$ if and only if 
the relative coefficient determinant is not zero. \qed
\end{Lemma}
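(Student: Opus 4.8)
The plan is to interpret the relative coefficient determinant geometrically as the Jacobian of the orbit map composed with projection to $W$. Fix a generic point $x_0 \in V$ and consider the orbit map $\mu_{x_0} : G \to V$, $g \mapsto \rho(g)\cdot x_0$. Its differential at the identity $e \in G$ is the linear map $d\mu_{x_0,e} : \g \to V$ sending $v \mapsto \xi_v(x_0)$, by the very definition (\ref{Eqn1.6}) of the representation vector fields. Since $G$ acts trivially on $V/W$, Lemma \ref{Lem4.1} (in the form (\ref{Eqn4.3})) shows the image of $d\mu_{x_0,e}$ already lies in $W$, so composing with $\pi' : V \to W$ (projection along the complementary basis $\{u_j\}$) changes nothing: the composite $\pi' \circ \mu_{x_0}$ has differential at $e$ given, in the bases $\{v_i\}$ of $\g$ and $\{w_j\}$ of $W$, precisely by the relative coefficient matrix $(a_{ji})$ evaluated at $x_0$.

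Next I would translate ``relatively open orbit'' into a rank condition. By definition $G$ has a relatively open orbit in $W$ exactly when, for generic $x_0$, the image $\pi'(\rho(G)\cdot x_0)$ is Zariski dense in $W$, equivalently (since this is a constructible set) contains a nonempty Zariski open subset of $W$. A constructible subset of the irreducible variety $W$ is dense if and only if it has dimension $\dim W$, and the dimension of the image of the morphism $\pi' \circ \mu_{x_0}$ equals the generic rank of its differential. By translating by elements of $G$ one sees this generic rank is attained at $e$ for generic $x_0$ (the orbit is homogeneous, and $\pi'$ is linear, so the rank of $d(\pi'\circ\mu_{x_0})_g$ is constant along the orbit up to the generic stratification of $x_0$). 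Hence the relatively open orbit condition holds iff $d(\pi'\circ\mu_{x_0})_e : \g \to W$ has rank $\dim W = \dim G$ for generic $x_0$, i.e. is an isomorphism, i.e. the matrix $(a_{ji}(x_0))$ is invertible for generic $x_0$. Since $\det(a_{ji})$ is a holomorphic (indeed polynomial) function on $V$, it is invertible at a generic point iff it is not identically zero, which is the assertion.

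The main technical obstacle is making the ``generic rank attained at $e$'' step precise, i.e. ruling out that the orbit-projection has full dimension only because of behavior away from the identity coset while $d(\pi'\circ\mu_{x_0})_e$ is degenerate for \emph{every} admissible $x_0$. The clean way around this is to note $\pi'\circ\mu_{x_0}$ is equivariant in the appropriate sense only up to the $G$-action on the source by right translation, so its differential at $g$ is $d(\pi'\circ\mu_{x_0})_e$ precomposed with $d(R_g)$-type identifications together with the differential of $\rho(g)$ on the target $W$ (which is invertible), hence has rank independent of $g$; then one picks $x_0$ in the open stratum where the orbit dimension — and thus this rank — is maximal. Once that is in hand, the equivalence with nonvanishing of $\det(a_{ji})$ as an element of $\cO_{V,0}$ follows from the standard fact that a holomorphic function vanishing on a nonempty open set of an irreducible variety is identically zero, applied on the connected variety $V$.
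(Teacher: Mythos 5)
Your proof is correct and follows essentially the same approach as the paper: identify the relative coefficient matrix $(a_{ji}(x))$ with the differential at $e$ of the projected orbit map $G \to W$, then use constructibility of the image of a rational map plus the fact that a dense constructible subset of $W$ is Zariski open. The only difference is that you spell out the converse direction (nonvanishing of the determinant from existence of a relatively open orbit) via a right-translation/equivariance argument to move the full-rank point of the differential back to the identity, whereas the paper dispatches this direction tersely with ``and conversely.''
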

We also note that the relative coefficient determinant is also 
well-defined up to multiplication by a unit, as by (\ref{Eqn4.3}) it is a 
generator for the $0$-th Fitting ideal for the quotient module $\cO_{V, 
0}\cdot \theta_{W, 0}/\cO_{V, 0}\cdot \g_V$. 
\par 
Now we are in a position to introduce a basic notion for us, that of a block
representation.
\begin{Definition}
\label{Def4.4} 
An equidimensional representation $V$ of a connected linear algebraic 
group $G$
will be called a {\em block representation} if:
\begin{itemize}
\item[i)]  there exists a sequence of $G$-invariant subspaces
$$   V = W_k \supset W_{k-1} \supset \cdots  \supset W_{1}  \supset
W_{0} = (0) .  $$
\item[ii)] for the induced representation $\rho_j : G \to \GL(V/W_j)$, we
let $K_j = \ker(\rho_j)$; then for all $j$, $\dim K_j = \dim W_j$ and the
action of $K_j/K_{j-1}$ on $V/W_{j-1}$ has a relatively open orbit in 
$W_{j}/W_{j-1}$.
\item[iii)]  the relative coefficient determinants $p_j$ for the
representations $K_j/K_{j-1}\to\GL(V/W_{j-1})$ and subspaces 
$W_j/W_{j-1}$ are all reduced and relatively prime in pairs in 
$\cO_{V, 0}$ (by Lemma \ref{Lem4.1}, $p_j \in \cO_{V/W_{j-1}, 0}$ 
and we obtain $p_j \in \cO_{V, 0}$ via pull-back by
the projection map from $V$).
\end{itemize}
We also refer to the decomposition of $V$ using the $\{ W_j\}$ and $G$ by
the $\{ K_j\}$ with the above properties as the {\em  decomposition for 
the block representation}.  Along with i) in the definition, we note there is 
a corresponding sequence of subgroups 
$$   G = K_k \supset K_{k-1} \supset \cdots  \supset K_{1}  \supset
K_{0} \,  .  $$
\par
Furthermore, if each $p_j$ is irreducible, then we will refer to it as a
{\em maximal block representation}.  \par
If in the preceding both i) and ii) hold, and the relative coefficient
determinants are nonzero but may be nonreduced or not relatively prime in
pairs, then we say that it is a {\em nonreduced block representation}.
\end{Definition}
\par

\subsection*{Block Triangular Form}\hfill\par
We deduce for a block representation $\rho : G \to \GL(V)$ (with
subspaces and kernels as in Definition \ref{Def4.4}) a special block 
triangular form for its coefficient matrix with respect to bases 
respecting the invariant subspaces $W_j$ and the  corresponding kernels 
$K_j$.  \par
Specifically, we first choose a  basis $\{w^{(j)}_i\}$ for
$V$ such that $\{ w^{(j)}_1, \dots , w^{(j)}_{m_j}\}$ is a complementary
basis to $W_{j-1}$ in $W_j$, for each $j$.  Second, letting $\bk_j$ denote
the Lie algebra for $K_j$,  we choose a basis $\{X^{(j)}_i\}$ for $\g$ such
that $\{ X^{(j)}_1, \dots , X^{(j)}_{m_j}\}$ is a complementary basis to
$\bk_{j-1}$ in $\bk_j$.  then we obtain (partially) ordered bases 
\begin{equation}
\label{Eqn4.5a}
 \{ w^{(k)}_{1}, \cdots, w^{(k)}_{m_k}, \cdots, \cdots, w^{(1)}_{1}, 
\cdots w^{(1)}_{m_1} \} 
\end{equation}
for $V$, and 
\begin{equation}
\label{Eqn4.5b}
 \{ X^{(k)}_{1}, \cdots, X^{(k)}_{m_k}, \cdots,  \cdots, X^{(1)}_{1}, \cdots,
X^{(1)}_{m_1}\} 
\end{equation}
for $\g$.  These bases have the property that the subsets $\{ w^{(j)}_{i}: 
1\leq j \leq \ell, 1\leq i\leq m_j \}$ form bases for the subspaces 
$W_{\ell}$, and the subsets $\{ X^{(j)}_{i}: 1\leq j \leq \ell, 1\leq i\leq 
m_j \}$, for the Lie algebras $\bk_{\ell}$ of kernels $K_{\ell}$.

\begin{Proposition}
\label{Prop4.5}
Let $\rho : G \to \GL(V)$ be a block representation with the ordered bases 
for $\g$ and $V$ given by (\ref{Eqn4.5a}) and (\ref{Eqn4.5b}).  Then, the 
coefficient matrix $A$ has a lower block triangular form as in 
(\ref{matr4.5}), where each $D_j$ is a $m_j\times m_j$ matrix. \par
Then, $p_j = \det(D_j)$ are the relative coefficient determinants.
\end{Proposition}
\begin{equation}
\label{matr4.5}
A=
\begin{pmatrix}
D_k &0 & 0 & 0 & 0 \\ *  & D_{k-1} & 0 & 0 & 0 \\  *  & * & \ddots & 0 & 0
\\
*  & * &* & \ddots & 0 \\  *  & *  & *  & * & D_1
\end{pmatrix},
\end{equation}

In (\ref{matr4.5}) if $p_1 = \det(D_1)$ is irreducible, then we will refer
to the variety $\cD$ defined by $p_1$ as the {\it generalized determinant
variety} for the decomposition. \par
As an immediate corollary we have

\begin{Corollary}
\label{Cor4.6}
For a block representation, the number of irreducible components in the
exceptional orbit variety is at least the number of diagonal blocks in the
corresponding block triangular form, with equality for a maximal block
representation.
\end{Corollary}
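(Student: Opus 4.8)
The plan is to read off the number of irreducible components of $\cE$ directly from the factorization of the coefficient determinant supplied by Proposition \ref{Prop4.5}. In the bases described there the coefficient matrix $A$ of the representation vector fields is (lower) block triangular with diagonal blocks $D_1,\dots,D_k$, so that
\[
   \det(A)\;=\;\prod_{j=1}^{k}\det(D_j)\;=\;\prod_{j=1}^{k}p_j ,
\]
where $p_j\in\cO_{V,0}$ is the relative coefficient determinant for the action of $K_j/K_{j-1}$ on $W_j/W_{j-1}$, pulled back to $V$. Everything then reduces to counting distinct irreducible factors of this product.

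The first step is to identify $\cE$ set-theoretically with the zero locus $V(\det A)$. At a point $x\in V$ the orbit $G\cdot x$ is open precisely when the vectors $\xi_{v_i}(x)$ span $T_xV\cong V$, i.e. when $\det(A)(x)\neq 0$. Since each $p_j$ is nonzero by Lemma \ref{Lem4.2} together with condition ii) of Definition \ref{Def4.4}, the product $\det(A)=\prod_j p_j$ is not identically zero, so such $x$ exist; because $V$ is irreducible, the resulting open orbit $\cU$ is unique and dense, so an orbit is open iff it equals $\cU$. Hence $\cE=V\setminus\cU=V(\det A)=\bigcup_{j=1}^{k}V(p_j)$ as sets.

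The second step is the component count. Each $p_j$ is a nonzero nonunit: the representation vector fields for the linear action of $K_j/K_{j-1}$ on $V/W_{j-1}$ are linear vector fields, so $p_j$ is a homogeneous polynomial of positive degree $m_j=\dim(W_j/W_{j-1})$ in those coordinates and in particular vanishes at $0$. Thus each $V(p_j)$ is a nonempty hypersurface with at least one irreducible component, and for a maximal block representation $p_j$ is irreducible, so $V(p_j)$ is itself irreducible. Since $\cO_{V,0}$ is a UFD and the $p_j$ are relatively prime in pairs (condition iii)), no irreducible component of $V(p_i)$ can lie in $V(p_j)$ for $i\neq j$, as such a containment would force a common irreducible factor. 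Therefore the irreducible components of $\cE$ are exactly the irreducible components of the separate $V(p_j)$, all mutually distinct, and their number is $\sum_{j=1}^{k}(\text{number of irreducible components of }V(p_j))\geq k$, with equality exactly when each $p_j$ is irreducible, i.e. for a maximal block representation.

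I do not expect any serious obstacle here, since the content is already packaged in Proposition \ref{Prop4.5} and in conditions ii)--iii) of Definition \ref{Def4.4}. The two points that need a word of care are the set-theoretic identity $\cE=V(\det A)$ (which rests on the existence and uniqueness of the open orbit) and the unique-factorization argument that keeps the component sets of distinct $V(p_j)$ disjoint; the latter is the single place where the pairwise relative primality of the $p_j$ is genuinely used, and it is exactly what makes the bound sharp in the maximal case.
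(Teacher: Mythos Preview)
Your proposal is correct and follows exactly the line the paper intends: the corollary is stated as ``immediate'' from Proposition \ref{Prop4.5}, and your argument --- identifying $\cE$ with $V(\prod_j p_j)$ via the open-orbit criterion and then counting irreducible factors using the UFD property and pairwise relative primality from condition iii) --- is precisely the content behind that word. The identification $\cE = V(\det A)$ you spell out is the same argument the paper gives shortly afterward in the proof of Theorem \ref{BlkRepFrDiv}.
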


\begin{proof}[Proof of Proposition \ref{Prop4.5}]
Since $K_\ell$ acts trivially on $V/W_\ell$, by Lemma \ref{Lem4.1}, for 
$X\in \bk_\ell$ the associated representation vector field may be written 
as
\begin{equation}
\label{eqn:repvf2}
\xi_X =\sum_{j=1}^\ell \sum_{i=1}^{m_j} a_{i j} w_i^{(j)} 
\end{equation}
where as mentioned above, the basis for $W_\ell$ is given by $\{w_i^{(j)} : 
1\leq j \leq \ell, 1\leq i\leq m_j\}$.
Thus, for $\{ X_i^{(\ell)} : i = 1, \dots, m_j\}$  a complementary basis to 
$\bk_{\ell-1}$ in $\bk_{\ell}$, the columns corresponding to 
$\xi_{X_i^{(\ell)}}$ will be zero above the block $D_{\ell}$ as indicated.  
\par
Furthermore, the quotient maps $(\varphi,\varphi^{\prime}) : (K_\ell, 
V)\to (K_\ell/K_{\ell-1},V/W_{\ell-1})$ define a homomorphism of groups 
and representations.  Thus, again by Lemma \ref{Lem4.1}, the coefficients 
$a_{j\, i}$ of $w_j^{(\ell)}$, $j = 1, \dots , m_{\ell}$, for the 
$\xi_{X_i^{(\ell)}}$ are the same as those for the representation of 
$K_\ell/K_{\ell-1}$ on $V/W_{\ell-1}$.  Thus, we obtain $D_{\ell}$ as the 
relative coefficient matrix for $K_{\ell}/K_{\ell-1}$ and $V/W_{\ell-1}$.  
Thus $p_{\ell}=\det(D_{\ell})$.
\end{proof}
\par

\begin{Remark}
By Lemma \ref{Lem4.1} and Proposition \ref{Prop4.5}, in \eqref{matr4.5}
the entries of $D_\ell$ and $p_\ell=\det(D_\ell)$ are polynomials in 
$\cO_{V/W_{\ell-1}}$ that may be pulled back by the quotient maps to 
give elements of $\cO_V$.  Then, in coordinates $(x_i^{(j)})$ defined via 
the basis $\{ w_i^{(j)}\}$, we have $p_\ell\in R_\ell$,
which is the subring of $\cO_{V}$ 
generated by $\{x_i^{(j)} : \ell\leq j \leq k, 1\leq i\leq m_j\}$.  There is 
the resulting reverse sequence of subrings
$$ \cO_V=R_1\supset R_2\supset\cdots \supset R_k\, ; $$
and only $D_1$ and $p_1$ have entries in $\cO_V$.  Then, it follows that 
the determinant variety defined by $p_1$ can be 
``completed'' to the free divisor given by the exceptional orbit 
variety by adjoining the pull-back of the variety defined by $\prod_{i = 
2}^{k} p_i$.  We use this idea in \S \ref{S:TowReps} to use towers of block 
representations to inductively place such determinantal varieties between 
two free divisors.
\end{Remark}
\par
\begin{Remark}
\label{Rem4.6}
We also remark that there is a converse to Proposition \ref{Prop4.5} that 
given the invariant subspaces, associated kernels, and complementary 
bases, then the 
coefficient matrix is block lower triangular;
if also the diagonal blocks are square 
and the determinants $p_j$ are nonvanishing, then the dimension condition 
in Definition \ref{Def4.4} is satisfied and each induced
representation has a relatively open orbit.

\end{Remark}
\par
\subsection*{Exceptional Orbit Varieties as Free and Free* Divisors} 
\hfill
\par
We can now easily deduce from Proposition \ref{Prop4.5} the basic result
for obtaining linear free divisors from representations of linear algebraic 
groups.
\begin{Thm}
\label{BlkRepFrDiv}
Let $\rho : G \to  \GL(V)$ be a block representation of a connected linear
algebraic group $G$, with relative coefficient determinants $p_j, j = 1,
\dots, k$. Then, the exceptional orbit variety $\cE, 0 \subset
V, 0$ is a linear free divisor with reduced defining equation 
$\prod_{j =1}^{k} p_j = 0$  \par
If instead $\rho : G \to  \GL(V)$ is a nonreduced block representation, then
$\cE, 0 \subset V, 0$ is a linear free* divisor and $\prod_{j = 1}^{k} p_j =
0$ is a nonreduced defining equation for $\cE, 0$.
\end{Thm}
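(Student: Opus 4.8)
The plan is to deduce Theorem \ref{BlkRepFrDiv} directly from Proposition \ref{Prop4.5} together with Saito's criterion (Theorem \ref{Saitocrit}(1)) and Lemma \ref{Lem4.2}. First I would fix the bases for $\g$ and $V$ described just before Proposition \ref{Prop4.5}, so that the coefficient matrix $A$ of the representation vector fields $\{\xi_{v_i}\}$ (with respect to the chosen basis of $V$) is the lower block triangular matrix of (\ref{matr4.5}), with diagonal blocks $D_j$ of size $m_j \times m_j$ and $\det(D_j) = p_j$. Since $A$ is block triangular, $\det(A) = \prod_{j=1}^k \det(D_j) = \prod_{j=1}^k p_j$.

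The next step is to verify the two hypotheses of Saito's criterion: that $\xi_{v_i} \in \dlog(\cE)$ for all $i$, and that $\prod_j p_j$ is a reduced defining equation for $\cE$. For the first point, each $\xi_{v_i}$ is tangent to every $G$-orbit, hence tangent to the union $\cE$ of the positive-codimensional orbits, so it preserves the reduced ideal $I(\cE)$; thus $\xi_{v_i} \in \dlog(\cE)$. For the second point, the key identification is that a point $x \in V$ lies in the open orbit $\cU$ if and only if the $n$ vectors $\xi_{v_i}(x)$ span $T_xV$, i.e. if and only if $\det(A)(x) \neq 0$; therefore $\cE = V(\det A) = V(\prod_j p_j)$ as sets. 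To see that $\prod_j p_j$ is reduced: by hypothesis (iii) in Definition \ref{Def4.4} each $p_j$ is reduced and the $p_j$ are pairwise relatively prime in $\cO_{V,0}$, so their product is reduced, and it cuts out $\cE$ with reduced structure. Hence Saito's criterion applies and $\cE$ is a linear free divisor with the $\{\xi_{v_i}\}$ a free module basis for $\dlog(\cE)$ and $\prod_j p_j = 0$ a reduced defining equation.

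For the free* case, the block triangular form of Proposition \ref{Prop4.5} and the computation $\det(A) = \prod_j p_j$ still hold, and the argument that $\cE = V(\prod_j p_j)$ as sets is unchanged (Lemma \ref{Lem4.2} guarantees each $p_j$, hence the product, is a nonzero function, so $\det A \not\equiv 0$ and the open orbit is nonempty). What fails is reducedness: $\prod_j p_j$ may have repeated factors, so it defines $\cE$ with possibly nonreduced structure. Invoking the nonreduced form of Saito's criterion — equivalently, observing that $\det A$ is a nonzero coefficient determinant for vector fields preserving $I(\cE)$ and generically of rank $n$ — gives that $\cE$ is a linear free* divisor with $\prod_j p_j = 0$ a (nonreduced) defining equation, as claimed.

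I expect the main obstacle to be the careful justification that $\prod_j p_j$ cuts out \emph{all} of $\cE$ with the \emph{right} multiplicity structure — i.e. that $V(\det A)$ equals the exceptional orbit variety as a set, and that in the reduced case no spurious components or embedded behavior arises. This rests on the equivalence "$x \in \cU \iff \xi_{v_i}(x)$ span $T_xV$", which in turn uses that $\dim G = \dim V$ and that $\cU$ is a single open orbit; one must also note that $\cE$ is pure of codimension one (a hypersurface), which follows since $\det A$ is not identically zero so $\cE$ is a proper subvariety, and every component of $\cE$ containing a generic point has codimension exactly one because the rank of $A$ drops by one there. The remaining steps — block triangularity, the determinant factorization, tangency of the $\xi_{v_i}$ — are immediate from Proposition \ref{Prop4.5}, Lemma \ref{Lem4.2}, and the definitions.
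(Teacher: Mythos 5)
Your proposal is correct and follows essentially the same route as the paper: apply Proposition~\ref{Prop4.5} to get the block-triangular form, compute $\det(A) = \prod_j p_j$, identify the complement of $V(\det A)$ as the single open orbit, note that the representation vector fields lie in $\dlog(\cE)$, and invoke Saito's criterion (with reducedness supplied by Definition~\ref{Def4.4}(iii)); the free* case is the same argument minus reducedness. Your extra detail on why $\cE = V(\det A)$ is a hypersurface and the explicit appeal to Lemma~\ref{Lem4.2} are slight elaborations of what the paper leaves implicit, not a different approach.
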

\begin{proof}
By Proposition \ref{Prop4.5}, we may choose bases for $\g$ and $V$ so
that the coefficient matrix has the form (\ref{matr4.5}).  Then, by the
block triangular form, the coefficient determinant equals $\prod_{j = 
1}^{k} p_j$,which by condition iii) for block representations is reduced.
As $\rho$ is algebraic, for  $v\in V$, the orbit map $G \to G\cdot v 
\subset V$ is rational so the orbit of $v$ is Zariski open if and only if
the orbit map at $v$ is a submersion, which is true if and only if the 
coefficient determinant is nonzero at $v$. As these Zariski open orbits are 
disjoint, there can be only one.  Hence, its complement is the exceptional 
orbit variety $\cE$ defined by the vanishing of the coefficient 
determinant.  \par
Since the representation vector fields belong to $\dlog (\cE)$, the first 
form of Saito\rq s Criterion (Theorem \ref{Saitocrit}) implies that $\cE$ 
is a free divisor.  \par
In the second case, if either the determinants of the relative coefficient
matrices $p_j$ are either nonreduced or not relatively prime in pairs then,
although $\prod_{j = 1}^{k} p_j = 0$ still defines $\cE$, it is nonreduced.
Hence, $\cE$ is then only a linear free* divisor.
\end{proof} 
\par
The usefulness of this result comes from several features: its general 
applicability to non-reductive linear algebraic groups, especially solvable 
groups; the behavior of block representations under basic operations 
considered in \S \ref{S:OpBLkReps}; the simultaneous and inductive 
applicability to a tower of groups and corresponding representations in \S 
\ref{S:TowReps}; and most importantly for applications, the abundance of 
such representations especially those appearing in complex versions of 
classical Cholesky-type factorization theorems \S \ref{S:CholFact}, 
their modifications \S \ref{S:ModCholreps}, \S \ref{S:NonlinLie}, and 
restrictions \S \ref{S:RestrBlocReps}.

\begin{Example}
There is a significant contrast to be made between the coefficient 
matrices for representations of reductive groups versus those for block 
representations in the non-completely reducible case.  For example, for an 
irreducible representation of a reductive group, such as is the case for 
quiver representations of finite type studied by Buchweitz-Mond  
\cite{BM}, it is not possible to represent the coefficient matrix in 
block lower triangular form, except as given by a single block.
Hence, the components of 
the exceptional orbit variety are not directly revealed by the structure of 
the coefficient matrix.
 
More generally, consider the action of $G = \prod_{i = 1}^{m} G_i$ on $V = 
\prod_{i = 1}^{m} V_i$ induced by the product representation, where each 
$G_i$ is reductive and $V_i$ irreducible, with $\dim(G_i)=\dim(V_i)$ and 
$G_i$ having an open orbit in $V_i$ for all $i$. This defines a nonreduced 
block 
representation with $W_j = \prod_{i = 1}^{j} V_i$  and $K_j = \prod_{i = 
1}^{j} G_i$, and the coefficient matrix is just block diagonal.
If each action of $G_i$ on $V_i$ defines a linear free divisor $\cE_i$, then 
$G$ acting on $V$ defines a linear free divisor which is a product union of 
the $\cE_i$ in the sense of \cite{D2}.  However, again the structure of the 
individual $\cE_i$ is not revealed by the block structure.  By contrast, as 
we shall see in subsequent sections, the block representations in the non-
completely reducible case, especially for representations of solvable 
groups, will reveal a tower-like structure in successively larger 
subspaces which completely captures the structure of the exceptional 
orbit varieties.
\end{Example}
\par
\subsection*{Representations of Solvable Linear Algebraic Groups} 
\hfill\par
The most important special case for us will concern representations of 
connected solvable linear algebraic groups.  Recall that a linear algebraic 
group $G$ is {\it solvable} if there is a series of algebraic subgroups  $G = 
G_0 \supset G_1 \supset G_2 \supset \cdots G_{k-1} \supset G_k = \{ 
e\}$ with $G_{j+1}$ normal in $G_j$ such that $G_j/G_{j+1}$ is abelian 
for all $j$.  Equivalently, if $G^{(1)} = [G, G]$ is the (closed) commutator 
subgroup of G, and $G^{(j+1)}= [G^{(j)}, G^{(j)}]$, then for some $j$, $G^{(j)} 
= \{1\}$.  \par
Unlike reductive algebraic groups, representations of solvable linear 
algebraic groups need not be completely reducible.  Moreover, neither the 
representations nor the groups themselves can be classified.  Instead, the 
important property of solvable groups for us is given by the Lie-Kolchin 
Theorem (see e.g. \cite[Cor. 10.5]{Bo}), which asserts that a finite 
dimensional representation $V$ of a connected solvable linear algebraic 
group $G$ has a flag of $G$--invariant subspaces
$$  V = V_N \supset V_{N-1} \supset \cdots \supset V_1 \supset V_0 = \{
0\} \, , $$
where $\dim V_j = j$ for all $j$. We shall be concerned with nontrivial 
block representations for the actions of connected solvable linear 
algebraic groups where the $W_j$ form a special subset of a flag of 
$G$--invariant subspaces.  Then, not only will we give the block
representation, but we shall see that the diagonal blocks $D_j$ will be
given very naturally in terms of certain submatrices.  These will be
examined in \S\S\, \ref{S:CholFact}, \ref{S:ModCholreps}, 
\ref{S:NonlinLie}, 
and \ref{S:RestrBlocReps}.

\section{Operations on Block Representations}
\label{S:OpBLkReps}
\par
We next give several propositions which describe how block
representations behave under basic operations on representations.  These
will concern taking quotient representations, restrictions to
subrepresentations and subgroups, and extensions of representations.  We 
will give an immediate application of the extension property Proposition 
\ref{prop5.3} in the next section.  We will also apply the restriction and 
extension properties in  \S \ref{S:RestrBlocReps} to obtain auxiliary 
block representations which will be needed to carry out calculations in 
Part II.
\par
Let  $\rho :G \to \GL(V)$ be a block representation with decomposition
$$   V = W_k \supset W_{k-1} \supset \cdots  \supset W_{1}  \supset
W_{0} = (0)  $$
and normal algebraic subgroups
$$   G = K_k \supset K_{k-1} \supset \cdots  \supset K_{1}  \supset
K_{0} \, ,$$
with $K_j = \ker(\rho_j : G \to \GL(V/W_j))$ and $\dim K_{j} = \dim
W_{j}$, so $K_{0}$ is a finite group.  We also let
$p_j\in\cO_{V/W_{j-1}}$ be the relative
coefficient determinant for the action of $K_j/K_{j-1}$ on $W_j/W_{j-
1}$ in $V/W_{j-1}$.  \par
We first consider the induced quotient representation of $G/K_{\ell}$ on
$V/W_{\ell}$.

\begin{Proposition}[Quotient Property]
\label{prop5.1}
For the  block representation $\rho :G \to \GL(V)$ with its 
decomposition as above, the induced quotient representation $G/K_{\ell}
\to \GL(V/W_{\ell})$ is a block representation with decomposition
\begin{align*}
\overline{V}=V/W_{\ell} \, &= \, \overline{W}_{k-\ell} \supset
\overline{W}_{k-\ell-1} \supset \cdots
\supset \overline{W}_{1}  \supset \overline{W}_{0} = (0)  \qquad 
\makebox{ and} \\
\overline{G} \, = \, G/K_{\ell}\, &= \, \overline{K}_{k-\ell} \supset
\overline{K}_{k-\ell - 1} 
\supset \cdots \supset \overline{K}_{1}  \supset \overline{K}_{0}
\end{align*}
where $\overline{W}_{j} = W_{j+\ell}/W_{\ell}$ and $\overline{K}_{j} =
K_{j+\ell}/K_{\ell}$.  Then, the coefficient determinant is given by 
$\prod_{i = \ell + 1}^{k} p_i$.  \par
If $\rho$ is only a nonreduced block representation then the quotient
representation is a (possibly) nonreduced block representation.
\end{Proposition}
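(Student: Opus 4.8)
The plan is to verify directly that the three defining conditions of a block representation (Definition \ref{Def4.4}) pass to the quotient, using the naturality of the construction of representation vector fields (Proposition \ref{Prop1.9}) to transport the relative coefficient matrices. First I would check condition i): the subspaces $\bar W_j = W_{j+\ell}/W_\ell$ are $\bar G$-invariant because each $W_{j+\ell}$ is $G$-invariant and contains $W_\ell$, and the chain $V/W_\ell = \bar W_{k-\ell} \supset \cdots \supset \bar W_0 = (0)$ is strictly decreasing since the original one was. (A small bookkeeping point: the top index should be $k-\ell$, not $k$; I would state the decomposition with indices running $0$ through $k-\ell$.) Likewise $\bar K_j = K_{j+\ell}/K_\ell$ makes sense because $K_\ell \subseteq K_{j+\ell}$ (a larger invariant subspace $W_{j+\ell}$ gives a smaller kernel condition only in the reverse inclusion — here $K_j = \ker(\rho_j)$ decreases as $W_j$ decreases, so $K_\ell \subset K_{j+\ell}$ for $j \geq 0$), and each is normal in $\bar G$.

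Next I would identify the induced quotient of the quotient. The key observation is the canonical isomorphism $(V/W_\ell)\big/\bar W_j \cong V/W_{j+\ell}$, under which the representation $\bar\rho_j : \bar G \to \GL((V/W_\ell)/\bar W_j)$ is carried to $\rho_{j+\ell} : G/K_\ell \to \GL(V/W_{j+\ell})$; hence $\ker(\bar\rho_j) = K_{j+\ell}/K_\ell = \bar K_j$, and $\dim \bar K_j = \dim K_{j+\ell} - \dim K_\ell = \dim W_{j+\ell} - \dim W_\ell = \dim \bar W_j$, giving the dimension condition in ii). For the relatively open orbit condition, note that $\bar K_j/\bar K_{j-1} = (K_{j+\ell}/K_\ell)\big/(K_{j+\ell-1}/K_\ell) \cong K_{j+\ell}/K_{j+\ell-1}$ and $\bar W_j/\bar W_{j-1} \cong W_{j+\ell}/W_{j+\ell-1}$, so this action is literally the same action as the one for the original block representation; the relatively open orbit in $V/W_{j+\ell-1}$ is exactly the relatively open orbit in $(V/W_\ell)/\bar W_{j-1}$ under the above identification. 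This also shows that the relative coefficient determinant of $\bar K_j/\bar K_{j-1}$ on $\bar W_j/\bar W_{j-1}$ is $p_{j+\ell}$ — more precisely, by Proposition \ref{Prop1.9} applied to the quotient homomorphism $(\id,\pi):(G,V)\to(G/K_\ell, V/W_\ell)$, a basis of representation vector fields for the subquotient action downstairs is $\pi$-related to the corresponding one upstairs, so the relative coefficient matrix $D_{j+\ell}$ (which by Lemma \ref{Lem4.1} already has entries in $\cO_{V/W_{j+\ell-1},0}$) is unchanged. Hence the relative coefficient determinants of the quotient block representation are $p_{\ell+1}, \dots, p_k$.

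Condition iii) is then immediate: the $p_{\ell+1}, \dots, p_k$ are a sub-collection of the original $p_1, \dots, p_k$, hence reduced and pairwise relatively prime in $\cO_{V,0}$, and pulling back along $V \to V/W_\ell$ they remain so in $\cO_{V/W_\ell,0}$ (the pullback of a reduced, pairwise-coprime collection along a linear projection stays reduced and pairwise coprime, since each $p_i$ for $i > \ell$ already lies in $\cO_{V/W_{i-1},0} \subseteq \cO_{V/W_\ell,0}$). The coefficient determinant of the quotient representation, being block triangular with diagonal blocks $D_{\ell+1},\dots,D_k$ by Proposition \ref{Prop4.5}, equals $\prod_{i=\ell+1}^k p_i$. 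Finally, for the nonreduced case, I would observe that all of the above goes through verbatim except that one only knows the $p_i$ are nonzero, not that they are reduced or coprime; their pullbacks to $\cO_{V/W_\ell,0}$ are still nonzero, so conditions i) and ii) hold and one obtains a (possibly) nonreduced block representation. I do not anticipate a genuine obstacle here — the main care needed is purely notational, keeping the index shift $j \mapsto j+\ell$ straight and confirming the various canonical isomorphisms of subquotients are compatible with the group actions; the substantive input, that the relative coefficient data is preserved under quotients, is exactly Proposition \ref{Prop1.9}.
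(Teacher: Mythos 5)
Your proof is correct and follows essentially the same approach as the paper's: identify the subquotients via the standard isomorphism theorems, observe that $\bar K_j/\bar K_{j-1}\simeq K_{j+\ell}/K_{j+\ell-1}$ and $\bar W_j/\bar W_{j-1}\simeq W_{j+\ell}/W_{j+\ell-1}$, and conclude the relative coefficient determinants are the same polynomials $p_{\ell+1},\dots,p_k$. You are also right that the top index in the stated decomposition should be $k-\ell$ rather than $\ell$ (so that $\bar W_{k-\ell}=W_k/W_\ell=V/W_\ell$), and your explicit appeal to Proposition \ref{Prop1.9} to justify that the relative coefficient matrices are literally unchanged is a clean way to make precise what the paper leaves implicit.
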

\begin{proof}
Let $\overline{p}_j\in\cO_{\overline{V}}$ be the relative coefficient
determinant of the representation
$\overline{K}_j/\overline{K}_{j-1}\to\GL(\overline{V}/
\overline{W}_{j-1})$
for the invariant subspace $\overline{W}_j/\overline{W}_{j-1}$.
By the basic isomorphism theorems, this representation is isomorphic to 
the representation $K_{j+\ell}/K_{j+\ell-1}\to\GL(V/W_{j+\ell-1})$ with 
the invariant subspace $W_{j+\ell}/W_{j+\ell-1}$, which has relative 
coefficient determinant $p_{j+\ell}\in\cO_V$.
Each of the polynomials $\overline{p}_j$ and $p_j$ are pullbacks of 
polynomials on the respective isomorphic spaces 
$\overline{V}/\overline{W}_{j-1}$ and $V/W_{j+\ell-1}$. 
As relative coefficient determinants are well-defined by the 
representation and invariant subspace up to multiplication by a unit, these
polynomials agree via the isomorphism $\overline{V}/\overline{W}_{j-1} 
\simeq V/W_{j+\ell-1}$ up to multiplication by a unit.  \par
By hypothesis, then,
the relative coefficient determinants
for the blocks in the quotient representation are reduced and relatively
prime.  Hence, the quotient representation is a block representation. \par
If the relative coefficient determinants for $\rho$ are not necessarily
reduced or relatively prime, then neither need be those for the quotient
representation.
\end{proof}
\par
The second operation is that of restricting to an invariant subspace and
subgroup.
\begin{Proposition}[Restriction Property]
\label{prop5.2}
Let $\rho :G \to \GL(V)$ be a block representation with its 
decomposition as above, and let $K$ be a connected linear 
algebraic subgroup with $K_{\ell} \supset K \supset K_{\ell-1}$.  Suppose 
that $W$ is a $K$--invariant subspace with $W_{\ell} \supset W \supset 
W_{\ell-1}$ and $\dim K = \dim W$.  Suppose that the coefficient 
determinant $p$ of $K/K_{\ell-1}$ on $W/W_{\ell-1}$ together with the 
restrictions of the relative coefficient determinants $p_{j\, |W}$ for the 
actions of  $K_j/K_{j-1}$ on the subspace $W_j/W_{j-1}$ in $V/W_{j-1}$ 
for $j = 1, \dots , \ell - 1$ are reduced,
and relatively prime.  Then, the restricted representation  
$\overline{\rho} : K
\to \GL(W)$ is a block representation with decomposition
\begin{align*}
  W \, &= \, \overline{W}_{\ell} \supset \overline{W}_{\ell - 1} \supset 
\cdots  
\supset \overline{W}_{1}  \supset \overline{W}_{0} = (0)  \qquad 
\makebox{ and} \\
  K \,  &= \, \overline{K}_{\ell} \supset \overline{K}_{\ell - 1} \supset 
\cdots  \supset
\overline{K}_{1}  \supset \overline{K}_{0} 
\end{align*}
where for $0 \leq j < \ell$, $\overline{W}_{j} = W_{j}$  and 
$\overline{K}_{j}$ contains $K_{j}$ as an open subgroup.
\end{Proposition}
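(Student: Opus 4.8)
The plan is to verify the three conditions of Definition~\ref{Def4.4} for the restricted representation $\bar\rho : K \to GL(W)$, using the block triangular form of Proposition~\ref{Prop4.5} for the ambient representation $\rho : G \to GL(V)$ together with the naturality of representation vector fields (Proposition~\ref{Prop1.9}) applied to the inclusion $(K, W) \hookrightarrow (G, V)$, which is a homomorphism of groups and representations. The point is that essentially everything about $(K,W)$ can be read off by restricting the already-understood data for $(G,V)$.

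First I would record the candidate decomposition: set $\bar W_j = W_j$ for $0 \le j < \ell$ and $\bar W_\ell = W$; each $\bar W_j$ is $K$-invariant since it is $G$-invariant, they form a flag, and $\dim W = \dim K$, so condition i) holds. By Proposition~\ref{Prop1.9}, for $v \in \bk$ the representation vector field $\xi_v$ of $(K,W)$ is the restriction to $W$ of the representation vector field $\xi_v$ of $(G,V)$. Choose a basis of $W$ adapted to $\{\bar W_j\}$ --- the vectors $w_i^{(s)}$ ($s \le \ell-1$) of the ambient decomposition, augmented by a complementary basis $\tilde w_1, \dots , \tilde w_m$ of $W_{\ell-1}$ in $W$, where $m = \dim W - \dim W_{\ell-1} = \dim K - \dim K_{\ell-1}$ --- and a basis of $\bk$ adapted to $\bk_{\ell-1} \subset \bk$ in the analogous fashion. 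Repeating the argument of the proof of Proposition~\ref{Prop4.5} verbatim (invoking Lemma~\ref{Lem4.1} for the relevant intermediate quotients, which are again homomorphisms of groups and representations), the coefficient matrix of $(K,W)$ is again lower block triangular as in (\ref{matr4.5}), with diagonal blocks $D_1|_W, \dots , D_{\ell-1}|_W$ inherited from the ambient representation and one new $m \times m$ top block $D$, namely the coefficient matrix of the representation of $K/K_{\ell-1}$ on $W/W_{\ell-1}$, so that $\det D = p$. Hence the coefficient determinant of $(K,W)$ is $p \cdot \prod_{j=1}^{\ell-1} p_{j|W}$, which is nonzero since each factor is reduced, hence nonzero; in particular $K$ acts on $W$ with an open orbit.

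The crux is condition ii). Set $\bar K_j = \ker(\bar\rho_j : K \to GL(W/\bar W_j))$. Since $K_j$ acts trivially on $V/W_j \supseteq W/W_j$ we get $K_j \subseteq \bar K_j$, hence $\bk_j \subseteq \mathrm{Lie}(\bar K_j)$. For the reverse dimension estimate I would project the block triangular coefficient matrix of $(K,W)$ along $W \to W/W_j$: the columns indexed by $\bk_j$ become identically zero, as those vector fields are supported in $W_j$, while the remaining $\dim W - \dim W_j$ columns, restricted to $W/W_j$, form a square matrix which up to reordering is block triangular with nonsingular diagonal blocks $D, D_{\ell-1}|_W, \dots , D_{j+1}|_W$, so they project to vector fields on $W/W_j$ that are linearly independent over the function field. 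Thus the image of $\bk$ in $\gl(W/W_j)$ has dimension at least $\dim W - \dim W_j$, so $\dim \mathrm{Lie}(\bar K_j) \le \dim W_j$; with the reverse inclusion this forces $\mathrm{Lie}(\bar K_j) = \bk_j$, whence $\dim \bar K_j = \dim W_j = \dim \bar W_j$ and $K_j$ is an open subgroup of $\bar K_j$. As $\bar K_j$ and $K_j$ then share the Lie algebra $\bk_j$, the relative coefficient matrix of the action of $\bar K_j/\bar K_{j-1}$ on $\bar W_j/\bar W_{j-1}$ inside $W/\bar W_{j-1}$ is $D_j|_W$ for $j < \ell$ and $D$ for $j = \ell$, which has nonzero determinant, and $\bar K_j$ acts trivially on $W/\bar W_j$ by construction; so Lemma~\ref{Lem4.2} gives a relatively open orbit for each $j$, completing condition ii). Finally, condition iii) is precisely the hypothesis that $p$ together with the $p_{j|W}$, $1 \le j \le \ell-1$, are reduced and relatively prime in pairs. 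Thus $\bar\rho$ is a block representation with the stated decomposition and relative coefficient determinants $p_1|_W, \dots , p_{\ell-1}|_W, p$.

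I expect the two delicate points to be, first, the bookkeeping in the block triangular step --- correctly matching the bases of $W$ and $\bk$ to the flags $\{W_j\}$ and $\{K_j\}$ and applying Lemma~\ref{Lem4.1} to the intermediate quotients so that both the off-diagonal vanishing and the identification of the diagonal blocks with $D_j|_W$ and $D$ come out --- and, second, the equality $\dim \bar K_j = \dim \bar W_j$: the subtlety is that the hypotheses are phrased in terms of $K$ and $K_{\ell-1}$, whereas Definition~\ref{Def4.4} demands the honest kernels $\bar K_j$, which are a priori larger than $K_j$; the projection-of-the-coefficient-matrix argument above is exactly what pins down their dimension and identifies their Lie algebras with the $\bk_j$.
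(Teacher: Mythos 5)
Your proposal is correct and follows essentially the same route as the paper: both set $\bar K_j = \ker(K \to GL(W/W_j))$, note $K_j \subseteq \bar K_j$ trivially, and then use the nonvanishing of the product of relative coefficient determinants (equivalently, the nonsingularity of the upper-left block of the block-triangular coefficient matrix) to pin down $\dim \bar K_j = \dim K_j$, after which the identification of the diagonal blocks with $D, D_{\ell-1}|_W, \dots$ and the verification of the remaining conditions is immediate. The only cosmetic difference is that the paper argues by contradiction — a $v \in \bar\k_j \setminus \k_j$ would force an identically zero column in the relative coefficient matrix for $K/K_j$ on $W/W_j$, contradicting its determinant being a product of nonzero factors — whereas you run the same observation forward as a linear-independence count bounding $\dim \mathrm{Lie}(\bar K_j)$ from above; these are the same idea.
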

\begin{proof}
We have given the subspaces and subgroups in the statement of the 
proposition where for $j < \ell$
\begin{equation}
\label{Eqn3.12}  
\overline{K}_j  \, = \,  \ker (K  \to \GL(W/ \overline{W}_{j})) \, = \, \ker 
(K \to  \GL(W/ W_{j})) .  
\end{equation}  
To prove that this gives a block representation, it is sufficient to show 
that $K_{j}$ is an open subgroup of $\overline{K}_{j}$ for each $j$.  It then 
follows first that $\dim(\overline{K}_j)=\dim(K_j)=\dim(W_j)$ and that 
the Lie algebras of $\overline{\k}_{j}$ and  $\k_{j}$ of $\overline{K}_{j}$, 
resp. $K_{j}$, agree.  Also, by assumption 
$\dim(\overline{K}_\ell)=\dim(W_\ell)$.  Then, the relative coefficient 
determinant of $K_j/K_{j-1}$ on the subspace $W_j/W_{j-1}$ 
in $W/W_{j-1}$ is $p_j|_W$, where
$p_j$ is the relative coefficient determinant of $K_j/K_{j-1}$
on the subspace $W_j/W_{j-1}$ in $V/W_{j-1}$.  
The conclusion of the proposition will then follow since, by assumption, 
the relative coefficient determinants are all reduced and relatively prime 
in $\cO_W$.
\par
Finally we prove that $K_{j}$ is an open subgroup of $\overline{K}_{j}$. 
Suppose not, so $\dim(K_j)<\dim(\overline{K}_j)$.  By \eqref{Eqn3.12} and 
ii) of Lemma \ref{Lem4.1}, if $X \in \overline{\k}_{j}$, then a 
representation of $\xi_X$ will have zero cofficients for the basis of 
$W/W_{j}$.  If we then compute the relative coefficient matrix 
for the action of $K/K_{j}$ on $W/W_j$, by including a 
$X \in \overline{\k}_{j}\backslash \k_{j}$ in a complementary basis to 
$\k_{j}$ in $\k$ (the Lie algebra of $K$), then the relative coefficient 
matrix would have a column identically zero, and so the relative 
coefficient determinant would be $0$.  This contradicts it being equal to 
the product of nonzero relative coefficient determinants appearing in the 
statement.  Thus, $\k_j=\overline{\k}_j$, and the statement follows.  
\end{proof}
\par
Third, we have the following proposition which allows for the extension
of a block representation yielding another block representation, providing 
a partial converse to Proposition \ref{prop5.1}.
\begin{Proposition}[Extension Property]
\label{prop5.3}
Let $\rho :G \to \GL(V)$ be a representation of a connected linear 
algebraic 
group, so that $W \subset V$ is a $G$--invariant subspace and
$K = \ker (G \to \GL(V/W))$ with $\dim (K) = \dim (W)$.  Suppose that the
quotient representation $\overline{\rho}: G/K \to  \GL(V/W)$ is a block
representation with decomposition
\begin{align*}
  V/W\,  &= \, \overline{W}_{\ell} \supset \overline{W}_{\ell-1} \supset 
\cdots  
\supset \overline{W}_{1}  \supset \overline{W}_{0} = (0)  \qquad 
\makebox{ and}  \\
  \overline{G} \,  = \, G/K \,  &= \, \overline{K}_{\ell} \supset 
\overline{K}_{\ell - 1} 
\supset
\cdots  \supset \overline{K}_{1}  \supset \overline{K}_{0}\, , 
\end{align*}
for which the relative coefficient determinant for the action of $K$ on
the subspace $W$ in $V$ is reduced and relatively prime to the coefficient 
determinant for $\overline{\rho}$.  Then, $\rho$ is a block representation 
with decomposition
\begin{align*}
  V \, &= \, W_{\ell+1} \supset W_{\ell} \supset \cdots  \supset W_{1}
\supset W_{0} = (0)  \qquad \makebox{ and}  \\
  G \, &= \, K_{\ell+1 } \supset  K_{\ell} \supset \cdots  \supset  K_{1}
\supset  K_{0} = \{ Id\} \, .
\end{align*}
Here $W_{1} = W$, $K_1 = K$, and for $j = 1, \dots, \ell$, $W_{j+1} =
\pi^{-1}(\overline{W}_j)$ and $K_{j+1 } = \pi^{\prime \,
-1}(\overline{K}_j)$ for $\pi :
V \to V/W$ and $\pi^{\prime} : G \to G/K$ the projections.
\par
If instead $\overline{\rho}$ has a nonreduced block structure or the 
relative
coefficient determinant for the action of $K$ on $W$ is nonreduced or not
relatively prime to the coefficient determinant for $\overline{\rho}$, 
then,
$\rho$ is a nonreduced block representation.
\end{Proposition}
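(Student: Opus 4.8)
The plan is to verify, one by one, the three conditions in Definition \ref{Def4.4} for the proposed decomposition of $\rho$, leaning on the fact that the quotient data $(\bar W_j, \bar K_j)$ already satisfy these conditions for $\bar\rho$. First I would check condition i): the $W_{j+1} = \pi^{-1}(\bar W_j)$ are $G$-invariant because $W$ is $G$-invariant and the $\bar W_j$ are $G/K$-invariant (hence their preimages under $\pi$ are $G$-invariant), and the chain $V = W_{\ell+1}\supset\cdots\supset W_1 = W\supset W_0 = (0)$ is strictly decreasing since the $\bar W_j$ are. Likewise $K_{j+1} = \pi'^{-1}(\bar K_j)$ is a normal algebraic subgroup of $G$ since $\bar K_j$ is normal in $G/K$, and $K_1 = K$, $K_0 = \{Id\}$.

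Next, condition ii). I would identify the relevant kernels: one checks $K_{j+1} = \ker(\rho_{j+1}: G\to GL(V/W_{j+1}))$. Indeed, $V/W_{j+1} = V/\pi^{-1}(\bar W_j) \cong (V/W)/\bar W_j$, and under this identification the $G$-action factors through $G/K$ and then through $G/K \to GL((V/W)/\bar W_j)$, whose kernel is $\bar K_j$; pulling back by $\pi'$ gives $K_{j+1}$. For the dimension count: for $j\geq 1$, $\dim K_{j+1} = \dim K + \dim \bar K_j = \dim W + \dim \bar W_j = \dim W_{j+1}$, using $\dim K = \dim W$ and the block hypothesis on $\bar\rho$; for $j = 0$, $\dim K_1 = \dim K = \dim W = \dim W_1$. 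For the relatively-open-orbit condition: when $j\geq 1$, the isomorphism theorems give $K_{j+1}/K_j \cong \bar K_j/\bar K_{j-1}$ and $W_{j+1}/W_j \cong \bar W_j/\bar W_{j-1}$ compatibly with the actions, so this action has a relatively open orbit because $\bar\rho$ is a block representation; when $j = 0$, the action of $K_1/K_0 = K$ on $W_1/W_0 = W$ has a relatively open orbit precisely because its relative coefficient determinant is assumed nonzero (this is Lemma \ref{Lem4.2}, and it is implied by the hypothesis that this determinant is reduced and relatively prime to the coefficient determinant of $\bar\rho$, hence in particular nonzero).

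Then condition iii). The relative coefficient determinant $p_1$ for $K_1/K_0 = K$ on $W_1/W_0 = W$ is, by definition, the relative coefficient determinant for the action of $K$ on $W$, which by hypothesis is reduced. For $j\geq 1$, the relative coefficient determinant $p_{j+1}$ for $K_{j+1}/K_j$ on $W_{j+1}/W_j$ equals — via the isomorphisms above and Lemma \ref{Lem4.1}, which guarantees these are polynomials pulled back from the appropriate quotient — the relative coefficient determinant $\bar p_j$ of $\bar K_j/\bar K_{j-1}$ on $\bar W_j/\bar W_{j-1}$, which is reduced since $\bar\rho$ is a block representation. Moreover $\{p_2,\dots,p_{\ell+1}\} = \{\bar p_1,\dots,\bar p_\ell\}$ are pairwise relatively prime because $\bar\rho$ is a block representation, and $p_1$ is relatively prime to their product — which is exactly the coefficient determinant of $\bar\rho$ — by hypothesis. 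Hence all three conditions hold and $\rho$ is a block representation. The main obstacle, and the step deserving the most care, is making the identification $K_{j+1} = \ker(\rho_{j+1})$ and the compatibility of the successive quotient actions $K_{j+1}/K_j$ on $W_{j+1}/W_j$ with $\bar K_j/\bar K_{j-1}$ on $\bar W_j/\bar W_{j-1}$ fully precise — i.e. checking that the relative coefficient determinants genuinely coincide rather than merely agreeing up to pullback by a projection — since everything else is a routine application of the isomorphism theorems.

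For the final assertion, I would observe that the same argument goes through verbatim, except that if $\bar\rho$ is only a nonreduced block representation then the $\bar p_j$ (hence $p_{j+1}$) may be nonreduced or fail to be pairwise relatively prime, and if the relative coefficient determinant of $K$ on $W$ is nonreduced or not relatively prime to the coefficient determinant of $\bar\rho$ then $p_1$ is nonreduced or shares a factor with some $p_{j+1}$; in all these cases conditions i) and ii) still hold and the relative coefficient determinants are still nonzero (the $\bar p_j$ are nonzero since $\bar\rho$ is at least a nonreduced block representation, and the determinant for $K$ on $W$ is nonzero by hypothesis), so $\rho$ is a nonreduced block representation in the sense of Definition \ref{Def4.4}.
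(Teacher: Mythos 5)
Your proof is correct and takes essentially the same approach as the paper's: identify the kernels $K_{j+1}=\pi'^{-1}(\bar K_j)=\ker(G\to GL(V/W_{j+1}))$, observe the resulting block-lower-triangular coefficient matrix whose diagonal blocks are pulled back from the quotient, and conclude from the hypotheses that the coefficient determinant is reduced (resp.\ possibly nonreduced) and nonzero. You are somewhat more explicit than the paper — you verify each of the three conditions of Definition \ref{Def4.4} in turn, while the paper compresses much of this into the single observation that the coefficient determinant factors as the product of the relative coefficient determinant for $K$ on $W$ and the (pulled-back) coefficient determinant for $\bar\rho$, is therefore nonzero, and hence the kernel of $\rho$ is finite — but the underlying argument is the same.
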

\par
\begin{proof}
Again the proposition gives the form of the decomposition, provided
we verify the properties.  By our assumptions, $\dim K_{j} = \dim W_{j}$ 
for all
$j$.  For $1 \leq j \leq \ell$, with $\pi^{\prime} : G \to G/K$ as above,
$$  \ker (G \to \GL(V/W_j)) = \pi^{\prime \, -1}(\ker (G/K \to
\GL(\overline{W}_\ell/
\overline{W}_{j-1})))  = \pi^{\prime \, -1}(\overline{K}_{j-1}) =  K_j\, . $$  
\par
Finally, using the stated decomposition, the coefficient matrix has a
lower block triangular form.  Then, the coefficient determinant for the
representation of $\rho :G \to \GL(V)$ is the nonzero product of the 
relative coefficient determinants, which equals the product of the 
relative coefficient determinant of $K$ acting on $W$ and the coefficient
determinant of $G/K$ acting on $V/W$ (pulled back to $V$).  Hence
it is a block representation.
\end{proof}

\begin{Remark}
If we extend a block representation as in Proposition
\ref{prop5.3}
and then form the quotient by $W$ using Proposition 
\ref{prop5.1}, 
we recover the original block representation.
\end{Remark}

\section{Towers of Linear Algebraic Groups and Representations}
\label{S:TowReps}
\par
The two key questions concerning block representations are: \par
\begin{itemize}
\item[i)]  How do we find the $G$-invariant subspaces $W_j$?
\item[ii)] Given the $\{ W_j\}$, what specifically are the diagonal blocks
$D_j$?
\end{itemize}
The first question becomes more approachable when we
have a series of groups with a corresponding series of representations.
\begin{Definition}
A {\em tower of linear algebraic groups} $\bG$ is a sequence of such
groups \par
$$ \{ e\} = G_0 \subset G_1 \subset G_2 \subset \cdots \subset G_k
\subset \cdots  \, .$$
Such a tower has a {\em tower of representations} $\bV = \{ V_j\}$  if
$$ (0) = V_0 \subset V_1 \subset V_2 \subset \cdots \subset V_k \subset
\cdots $$
where each $V_k$ is a representation of $G_k$, and for the inclusion maps
$\iti_k : G_{k} \hookrightarrow G_{k+1}$, and $\itj_k : V_{k}
\hookrightarrow V_{k+1}$, the mapping $(\iti_k, \itj_k): (G_{k}, V_{k}) \to
(G_{k+1}, V_{k+1})$ is a homomorphism of groups and representations.

\end{Definition}
Then, we identify within towers when the block representation structures
are related.
\begin{Definition}
\label{Def6.2}
 A tower of connected linear algebraic groups and representations $(\bG, 
\bV)$ has a {\em block
structure} if: for all $\ell \geq 0$ the following hold:
\begin{itemize}
\item[i)]  Each $V_{\ell}$ is a block representation of $G_{\ell}$ via the
decompositions
$$  G_{\ell} = K_k^{\ell} \supset K_{k-1}^{\ell} \supset \cdots  \supset
K_{1}^{\ell}  \supset K_{0}^{\ell}  $$
where $K_{0}^{\ell}$ is a finite group, and
$$   V_{\ell} = W_k^{\ell} \supset W_{k-1}^{\ell} \supset \cdots  \supset
W_{1}^{\ell} \supset W_{0}^{\ell} = (0) . $$

\item[ii)]  For each $\ell > 0$ the composition of the natural
homomorphisms of representations
$$  (G_{\ell-1}, V_{\ell-1}) \to (G_{\ell}, V_{\ell})  \to
(G_{\ell}/K_{1}^{\ell}, V_{\ell}/W_{1}^{\ell})  $$
is an isomorphism of representations.
\end{itemize}\par
If instead in i) we only have nonreduced block representations, then we
say that the tower has a {\em nonreduced block structure}.
\end{Definition}
\par
In particular, for all $\ell$ the sequence
$$\begin{CD}
0 @>>> K_1^{\ell} @>>> G_\ell @>>> G_\ell/K_1^{\ell} @>>> 0
\end{CD}$$
splits, as $G_\ell/K_1^{\ell}\cong i_{\ell-1}(G_{\ell-1})\subset
G_{\ell}$. 
Moreover, the representations of these groups and their collections of 
invariant subspaces are compatible.  This will also allow us to use the 
properties of \S \ref{S:OpBLkReps} to give inductive criteria at each stage 
to establish the tower block structure.  \par  
We first deduce an important consequence for
the collection of exceptional orbit varieties.  Specifically the tower 
structure will allow us to canonically decompose the exceptional orbit 
varieties for the representations in terms of those from lower dimensions 
together with the generalized determinant varieties.  \par
Then, for such a tower of representations with a block structure (or
nonreduced block structure) we have the following basic theorem which
summarizes the key consequences and will yield the results for many 
spaces of matrices.  
\begin{Thm}
\label{Towerthm}
Suppose $(\bG, \bV)$ is a tower of connected linear algebraic groups and 
representations which has a block structure.   Let $\cE_{\ell}$ be the 
exceptional orbit variety for the action of $G_{\ell}$ on $V_{\ell}$.  
\begin{itemize}
\item[i)]  For each ${\ell}$, $\cE_{\ell}$ is a linear free divisor.
\item[ii)]  The quotient space $V_{\ell}/W_{1}^{\ell}$ can be naturally
identified with $V_{\ell - 1}$ as $G_{\ell-1}$--representations.
\item[iii)]  The generalized determinant variety $\cD_{\ell}$ for the 
action of
$G_{\ell}$ on $V_{\ell}$ satisfies $\cE_{\ell} = \cD_{\ell} \cup 
\pi_{\ell}^*\cE_{{\ell}-1}$, where
$\pi_{\ell}$ denotes the projection $V_{\ell} \to V_{{\ell}-1}$ induced 
from ii).
\end{itemize}
\par
If instead $(\bG, \bV)$ has a nonreduced block structure, then each
$\cE_j$ is a linear free* divisor.
\end{Thm}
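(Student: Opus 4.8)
The plan is to prove Theorem \ref{Towerthm} by induction on $\ell$, using the operations on block representations from \S \ref{S:OpBLkReps} to pass between consecutive levels of the tower, and then reading off each of the three conclusions from the resulting block structure together with Theorem \ref{BlkRepFrDiv}. The base case $\ell = 0$ (or $\ell = 1$) is immediate since $V_0 = (0)$ and $V_1$ is a block representation of $G_1$ by hypothesis i) of Definition \ref{Def6.2}.

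For the inductive step, I would first establish conclusion (ii): by hypothesis ii) of the block structure, the composite $(G_{\ell-1}, V_{\ell-1}) \to (G_\ell, V_\ell) \to (G_\ell/K_1^\ell, V_\ell/W_1^\ell)$ is an isomorphism of representations, so $V_\ell/W_1^\ell \cong V_{\ell-1}$ as $G_{\ell-1}$--representations, with $K_1^\ell$ playing the role of the kernel $K = \ker(G_\ell \to \GL(V_\ell/W_1^\ell))$ and $\dim K_1^\ell = \dim W_1^\ell$ from the block representation structure of $V_\ell$. Next, for conclusion (i): since $V_\ell$ is given to be a block representation of $G_\ell$ by Definition \ref{Def6.2}(i), Theorem \ref{BlkRepFrDiv} applies directly and shows $\cE_\ell$ is a linear free divisor with reduced defining equation $\prod_{j=1}^k p_j^{(\ell)} = 0$, where the $p_j^{(\ell)}$ are the relative coefficient determinants of $K_j^\ell/K_{j-1}^\ell$ on $W_j^\ell/W_{j-1}^\ell$. (Strictly, one should also check that the block structure of $V_\ell$ is compatible with viewing it as the extension of the block representation $V_{\ell-1} \cong V_\ell/W_1^\ell$ by the action of $K_1^\ell$ on $W_1^\ell$; this is exactly the content of Proposition \ref{prop5.3}, and it is what lets the inductive hypothesis on $V_{\ell-1}$ feed into the statement at level $\ell$, in particular identifying the relative coefficient determinants $p_j^{(\ell)}$ for $j \geq 2$ with the pulled-back $p_{j-1}^{(\ell-1)}$ via Proposition \ref{prop5.1}.)

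For conclusion (iii), I would use Proposition \ref{Prop4.5}: with the bases ordered so the coefficient matrix has the lower block triangular form (\ref{matr4.5}), the bottom diagonal block $D_1$ has determinant $p_1^{(\ell)}$ defining the generalized determinant variety $\cD_\ell$, while the remaining blocks $D_k, \dots, D_2$ are precisely (by the Quotient Property, Proposition \ref{prop5.1}, applied with $K_1^\ell$) the coefficient matrix for the block representation $G_\ell/K_1^\ell \to \GL(V_\ell/W_1^\ell) \cong \GL(V_{\ell-1})$. Hence $\prod_{j=2}^k p_j^{(\ell)}$ is the defining equation of $\cE_{\ell-1}$ pulled back along the projection $\pi_\ell : V_\ell \to V_{\ell-1}$, and the factorization $\prod_{j=1}^k p_j^{(\ell)} = p_1^{(\ell)} \cdot \pi_\ell^*(\prod_{j=1}^{k-1} p_j^{(\ell-1)})$ of the reduced defining equation translates into the set-theoretic (indeed scheme-theoretic) identity $\cE_\ell = \cD_\ell \cup \pi_\ell^* \cE_{\ell-1}$.

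I expect the main obstacle to be bookkeeping rather than conceptual: one must carefully track the re-indexing of the decompositions $\{W_j^\ell\}$, $\{K_j^\ell\}$ at level $\ell$ against $\{W_j^{\ell-1}\}$, $\{K_j^{\ell-1}\}$ at level $\ell-1$ under the identification $V_\ell/W_1^\ell \cong V_{\ell-1}$, and verify that the reducedness and pairwise-coprimality of the full collection of relative coefficient determinants at level $\ell$ follows from that at level $\ell-1$ together with the coprimality of $p_1^{(\ell)}$ with the rest — which is where the hypothesis that the tower \emph{has a block structure} (so that each $V_\ell$ is assumed to be a genuine block representation) does the work, rather than needing to re-derive it. For the nonreduced case, the same argument runs verbatim with Theorem \ref{BlkRepFrDiv}'s second conclusion and the nonreduced versions of Propositions \ref{prop5.1} and \ref{prop5.3}, yielding that each $\cE_\ell$ is a linear free* divisor.
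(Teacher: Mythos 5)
Your proposal follows essentially the same argument as the paper's proof. The paper establishes (i) immediately from Theorem~\ref{BlkRepFrDiv}, reads (ii) off Definition~\ref{Def6.2}(ii), and for (iii) chooses a basis exhibiting a coarse $2\times 2$ lower block-triangular coefficient matrix $\left(\begin{smallmatrix} A & 0 \\ * & B \end{smallmatrix}\right)$ with $A$ the coefficient matrix of $(G_{\ell-1},V_{\ell-1})$ pulled back along $\pi_\ell$ and $B$ the relative block for $K_1^\ell$ on $W_1^\ell$, exactly the decomposition you obtain from Propositions~\ref{Prop4.5} and~\ref{prop5.1}. Your framing as an induction on $\ell$ is superfluous but harmless: (i) and (ii) hold level-by-level with no inductive input, and for (iii) you only need to know the defining equation of $\cE_{\ell-1}$, not any inductive hypothesis about its structure; likewise, the appeal to Proposition~\ref{prop5.3} is not needed since the block structure of $(G_\ell,V_\ell)$ is assumed, as you yourself note. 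Aside from this slight over-packaging, the content matches the paper.
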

\begin{proof}
First, it is immediate from Theorem \ref{BlkRepFrDiv} that each 
$\cE_{\ell}$ is a linear free divisor.  Furthermore, by property ii) for a 
block structure for towers, the composition $V_{\ell-1} \to V_{\ell} \to 
V_{\ell}/W_{1}^{\ell}$ is an isomorphism which defines for each $\ell$ a 
projection $\pi_{\ell} : V_{\ell} \to  V_{\ell-1}$ with kernel 
$W_{1}^{\ell}$ which is equivariant for the induced $G_{\ell-1}$-action. 
This establishes ii).  
\par
To show iii), note that by property ii) we may view
$(G_\ell,V_\ell)$ as an extension of $(G_{\ell-1},V_{\ell-1})$
in the sense of 
Proposition \ref{prop5.3}.  By the proof of Proposition \ref{prop5.3},
$\cE_\ell$ is the union of $\pi_\ell^*(\cE_{\ell-1})$ and the locus
defined by the relative
coefficient determinant for the action of $K_1^{\ell}$ on the subspace
$W_1^\ell$ in $V_\ell$, i.e., the generalized determinant variety $D_\ell$.
\end{proof}
\par
We can also give a levelwise criterion that a tower have a block
structure.
\begin{Proposition}
\label{Prop7.2}
Suppose that a tower of linear algebraic groups and representations 
$(\bG, \bV)$  satisfies the following conditions: the representation of 
$G_1$ on $V_1$ is a block representation and for all $\ell \geq 1$ the 
following hold:
\begin{itemize}
\item[i)]  The representation $V_{\ell}$ of $G_{\ell}$ is an
equidimensional representation and has an invariant subspace $W_{\ell}
\subset V_{\ell}$ of the same dimension as $K_{\ell}$, the connected
component of the identity of $\ker(G_{\ell} \to \GL(V_{\ell}/W_{\ell}))$.
\item[ii)] The action of $K_{\ell}$ on $W_{\ell}$ has a relatively open
orbit in $V_{\ell}$, and the relative coefficient determinant for $K_{\ell}$
on $W_{\ell}$ in $V_{\ell}$ is reduced and relatively prime to the
coefficient determinant of $G_{\ell-1}$ acting on $V_{\ell-1}$ (pulled
back to $V_{\ell}$ via projection along $W_{\ell}$).
\item[iii)]  The composition of the natural homomorphisms of
representations
$$  (G_{\ell-1}, V_{\ell-1}) \to (G_{\ell}, V_{\ell})  \to (G_{\ell}/K_{\ell},
V_{\ell}/W_{\ell})  $$
is an isomorphism of representations.
\end{itemize} \par
Then, the tower $(\bG, \bV)$ has a natural block structure (with the 
decomposition for $(G_{\ell}, V_{\ell})$ given by
(\ref{Eqn7.1a}) and (\ref{Eqn7.1b}) below).  \par
If the representation of $G_1$ on $V_1$ only has a nonreduced block
representation or in condition ii) the relative coefficient determinants 
are not all reduced or not relatively prime then the tower has a 
nonreduced block
structure.
\end{Proposition}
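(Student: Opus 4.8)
The plan is to induct on $\ell$, at each step promoting the block representation structure from level $\ell-1$ to level $\ell$ by invoking the Extension Property (Proposition \ref{prop5.3}). The base case $\ell=1$ is the standing hypothesis that $G_1$ acting on $V_1$ is a block representation. For the inductive step, assume $(G_{\ell-1},V_{\ell-1})$ is already a block representation with its decomposition. Condition (iii) says the composition $(G_{\ell-1},V_{\ell-1})\to(G_\ell,V_\ell)\to(G_\ell/K_\ell,V_\ell/W_\ell)$ is an isomorphism of representations; transporting the decomposition of $(G_{\ell-1},V_{\ell-1})$ across this isomorphism shows that $G_\ell/K_\ell$ acting on $V_\ell/W_\ell$ is again a block representation, with coefficient determinant equal, under this identification, to that of $G_{\ell-1}$ on $V_{\ell-1}$.

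Next I would apply Proposition \ref{prop5.3} to the representation of $G_\ell$ on $V_\ell$, taking $W=W_\ell$ and $K=K_\ell$ (the hypothesis there that $K$ be the full kernel of $G_\ell\to\GL(V_\ell/W_\ell)$ being read up to finite index, as elsewhere in the paper). Condition (i) supplies that $W_\ell$ is $G_\ell$-invariant with $\dim K_\ell=\dim W_\ell$. Condition (ii), together with Lemma \ref{Lem4.2}, gives that $K_\ell$ acting on $W_\ell$ has a relatively open orbit and that its relative coefficient determinant is reduced and relatively prime to the coefficient determinant of the quotient representation $\bar\rho:G_\ell/K_\ell\to\GL(V_\ell/W_\ell)$, under the identification of the previous paragraph. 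Since $\bar\rho$ is a block representation, these are exactly the hypotheses of the Extension Property, so the representation of $G_\ell$ on $V_\ell$ is a block representation, with decomposition obtained by putting $W_1^\ell=W_\ell$, $K_1^\ell=K_\ell$, and $W_{j+1}^\ell$, $K_{j+1}^\ell$ the preimages, under the two projections, of the $j$-th terms of the decomposition of $(G_\ell/K_\ell,V_\ell/W_\ell)$. Unwinding the identifications, this is the decomposition stated in (\ref{Eqn7.1a}) and (\ref{Eqn7.1b}), completing the induction.

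It then remains only to verify Definition \ref{Def6.2}. Part (i) is precisely what the induction produced. Part (ii) asks that $(G_{\ell-1},V_{\ell-1})\to(G_\ell,V_\ell)\to(G_\ell/K_1^\ell,V_\ell/W_1^\ell)$ be an isomorphism of representations, and since $K_1^\ell=K_\ell$ and $W_1^\ell=W_\ell$ by construction, this is verbatim condition (iii). Hence $(\bG,\bV)$ has a block structure. For the final assertion, I would use that Proposition \ref{prop5.3} also has a nonreduced variant: if $G_1$ on $V_1$ is only a nonreduced block representation, or if at some level the relative coefficient determinants of condition (ii) fail to be reduced or pairwise relatively prime, then every application of the Extension Property yields only a nonreduced block representation, and the same induction gives a nonreduced block structure.

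The step I expect to need the most care is the bookkeeping hidden in the inductive use of Proposition \ref{prop5.3}: one must check that the relative coefficient determinants $p_j^\ell$ of the level-$\ell$ decomposition for $j\geq 2$ are the pull-backs, along the projection $V_\ell\to V_\ell/W_\ell\cong V_{\ell-1}$, of the $p_{j-1}^{\ell-1}$, so that their reducedness and pairwise relative primeness --- together with the freshly imposed condition (ii) at level $\ell$ --- genuinely propagate up the tower. This is implicit in the proofs of Propositions \ref{prop5.1} and \ref{prop5.3}, but since the whole tower statement rests on it, it is worth making explicit here.
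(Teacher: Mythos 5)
Your proof is correct and follows essentially the same route as the paper's: induct on $\ell$, use condition (iii) to identify $(G_\ell/K_\ell, V_\ell/W_\ell)$ with $(G_{\ell-1}, V_{\ell-1})$, then invoke the Extension Property (Proposition \ref{prop5.3}) with $W=W_\ell$, $K=K_\ell$ to obtain the block decomposition at level $\ell$ by pulling back the level-$(\ell-1)$ decomposition, and finally observe that condition (iii) is exactly part (ii) of Definition \ref{Def6.2}. Your remark about reading the kernel hypothesis of Proposition \ref{prop5.3} ``up to finite index'' and your closing paragraph flagging the need to track how the relative coefficient determinants pull back along $V_\ell\to V_{\ell-1}$ correspond to details the paper handles by explicitly displaying the decomposition (\ref{Eqn7.0a}) and verifying in the inductive step that the preimage construction (\ref{Eqn7.2b}) reproduces it.
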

\begin{Remark}
 If $p_j$ denotes the relative coefficient determinant for the action of
$K_j$ on $W_j$ in $V_j$, then it will follow by the proof of Theorem
\ref{Towerthm} that it is sufficient that $p_{\ell}$ is reduced and
relatively prime to each of the $p_j$, $j = 1, \dots , \ell - 1$  (pulled back
to $V_{\ell}$ by the projection $\pi_{i} : V_{\ell} \to V_{i}$ consisting of
the compositions of projections along the $W_j$).
\end{Remark}
\begin{proof}
We shall show by induction on $\ell$ that each representation of
$G_{\ell}$ on $V_{\ell}$ is a block representation.
Each inductive step is an application of Proposition \ref{prop5.3}.
We begin by defining the decomposition for $(G_{\ell}, V_{\ell})$.  \par
We first suppose that we have a trivial block
decomposition (i.e. single block) for $G_1$ on $V_1$.  We let $\pi_j : G_j 
\to G_{j-1}$ denote the projection obtained from the
composition of the projection $ G_j \to G_{j}/K_{j}$ with the inverse of
the isomorphism given by condition iii).  We can analogously define
$\pi_j^{\prime} : V_j \to V_{j-1}$.  Composing successively the $\pi_j$
we obtain projections $\pi_{\ell}^j : G_{\ell} \to G_{j}$. Likewise we
define $\pi_{\ell}^{j\, \prime} : V_{\ell} \to V_{j}$ by successive
compositions of the $\pi_j^{\prime}$.  \par
Then, we define for $1 < j \leq \ell$,
\begin{equation}
\label{Eqn7.0a}
W_{j}^{\ell} = \pi_{\ell}^{j\, \prime\, -1}(W_j)\quad \makebox{ and }
\quad  K_{j}^{\ell} = \pi_{\ell}^{j\, -1}(K_j) .
\end{equation}
For $j = 1$, we let $W_{1}^{\ell} = W_{\ell}$ and $K_{1}^{\ell} = K_{\ell}$
(also $K_{0}^{\ell} = \ker(G_{\ell} \to \GL(V_{\ell}))$).  \par
Then, the decomposition is given by
\begin{equation}
\label{Eqn7.1a}
   V_{\ell}  =  W_{\ell}^{\ell} \supset W_{\ell-1}^{\ell} \supset \cdots
\supset W_{1}^{\ell} \supset W_{0}^{\ell} = (0) \qquad \makebox{ and}
\end{equation}
\begin{equation}
\label{Eqn7.1b}
 G_{\ell} = K_{\ell}^{\ell} \supset K_{\ell-1}^{\ell} \supset \cdots
\supset K_{1}^{\ell}  \supset K_{0}^{\ell} \, \, .
\end{equation}
\par

Then, for $\ell = 1$, the decomposition given by (\ref{Eqn7.1a}) and
(\ref{Eqn7.1b}) is that for $G_1$ on $V_1$.  We assume it is true for all
$j < \ell$, and consider the  representation of $G_{\ell}$ on $V_{\ell}$.
\par
By assumption, $G_{\ell}/K_{\ell} \simeq G_{\ell-1}$ and
$V_{\ell}/W_{\ell} \simeq V_{\ell-1}$ as $G_{\ell-1}$ representations.
By the assumption, the relative coefficient determinant for the
representation of $K_{\ell}$ on $W_{\ell}$ is reduced and relatively prime
to the coefficient determinant of $G_{\ell - 1}$ acting on $V_{\ell-1}$.
Hence, we may apply Proposition \ref{prop5.3} to conclude that the
representation of $G_{\ell}$ on $V_{\ell}$ has a block representation
obtained by pulling back that of $G_{\ell-1}$ on $V_{\ell-1}$ via the
projections $\pi_{\ell} : G_{\ell} \to G_{\ell-1}$ and $\pi_{\ell}^{\prime} :
V_{\ell} \to V_{\ell-1}$. Specifically, for $j > 1$ we let 
\begin{equation}
\label{Eqn7.2b}
W_{j}^{\ell} = \pi_{\ell}^{\prime\, -1}(W_{j}^{\ell-1}) \quad \makebox{ and 
}
\quad  K_{j}^{\ell} = \pi_{\ell}^{-1}(K_{j}^{\ell-1})\,\, .
\end{equation}
For $j =1$, $W_{1}^{\ell} = W_{\ell}$ and $K_{1}^{\ell} = K_{\ell}$,
while $K_{0}^{\ell} = \ker(G_{\ell} \to \GL(V_{\ell}))$ is a finite group.
However, by the inductive assumption, (\ref{Eqn7.2b}) gives exactly
$W_{j}^{\ell}$ and $K_{j}^{\ell}$ defined for (\ref{Eqn7.0a}).  This
establishes the inductive step.  Then, assumption iii) establishes the 
second condition for the tower
having a block structure.  \par
If instead of having a trivial block decomposition for $G_1$ on $V_1$; we 
have a full block representation for $G_1$ on $V_1$, involving say $N$ 
blocks for $(G_1; V_1)$, then we can refine the block representation
given here for $(G_{\ell}; V_{\ell})$ by pulling the block decomposition for 
$(G_1; V_1)$ back via the $\pi_{\ell}^1$ and $\pi_{\ell}^{1\,\prime}$ to 
obtain a block representation with $N + \ell - 1$ blocks.  \par
If $(G_1, V_1)$ only has a nonreduced block structure or the relative
coefficient determinants are not reduced or not relatively prime, then the
above proof only shows the $(G_{\ell}, V_{\ell})$ have nonreduced block
structures.
\end{proof}
\par
The use of this Proposition to establish that certain towers of
representations have block structure will ultimately require that we
establish that the relative coefficient determinants are irreducible and
relatively prime.  The following Lemma will be applied in later sections 
for each of the families that we consider.
\begin{Lemma}
\label{Lem7.4}
Suppose $f\in\C[x_1 , \ldots , x_n, y]$,
and $g = \frac{\partial f}{\partial y} \in\C[x_1, \dots , x_n]$.
\begin{itemize}
\item[i)]  If $\gcd(f, g) = 1$ then  $f$ is irreducible.
\item[ii)]  If for each irreducible factor $g_1$ of $g$, there is a $(x_{1 0},
\dots , x_{n 0}, y_0)$ so that $g_1(x_{1 0}, \dots , x_{n 0}) = 0$  while
$f(x_{1 0}, \dots , x_{n 0}, y_0) \neq 0$, then $f$ is irreducible.
\end{itemize}
\end{Lemma}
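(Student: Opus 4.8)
The plan is to use the one piece of structure the hypothesis hands us: since $g = \partial f/\partial y$ lies in $\C[x_1,\dots,x_n]$, the polynomial $f$ is affine in $y$, so we may write $f = g\,y + h$ with $g,h \in \C[x_1,\dots,x_n]$. I would dispose at the outset of the degenerate case $g\equiv 0$: then $f$ involves no $y$ and neither statement carries content (in every application $g$ is a nonzero relative coefficient determinant), so from now on assume $g\neq 0$ and hence $\deg_y f = 1$; in particular $f$ is a non-unit, so asking whether it is irreducible is meaningful. Throughout I write $R := \C[x_1,\dots,x_n][y]$ and $S := \C[x_1,\dots,x_n]$, and I use repeatedly the elementary fact that a divisor in $R$ of a $y$-free polynomial is itself $y$-free (compare $y$-degrees, using that $S$ is a domain so $y$-degrees add).

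For (i) I would argue directly. Suppose $f = AB$ in $R$ with $A,B$ non-units. Since $y$-degrees add and $\deg_y f = 1$, one factor, say $B$, is $y$-free, i.e. $B \in S$, while $A = A_1 y + A_0$ with $A_1 \neq 0$. Comparing coefficients of $y^1$ and of $y^0$ in $f = A_1 B\,y + A_0 B$ gives $g = A_1 B$ and $h = A_0 B$, so $B$ divides both $g$ and $f$; hence $B \mid \gcd(f,g) = 1$, so $B$ is a unit — a contradiction. Therefore $f$ is irreducible.

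For (ii) I would reduce to (i) by showing the hypothesis forces $\gcd(f,g) = 1$. Suppose instead $d := \gcd(f,g)$ is a non-unit (it is nonzero since $f,g\neq 0$), and pick an irreducible factor $g_1$ of $d$. Then $g_1 \mid g$, so $g_1$ is an irreducible factor of $g$; comparing $y$-degrees shows $g_1$ is $y$-free; and $g_1 \mid f$ together with $g_1 \mid g$ gives $g_1 \mid h = f - g\,y$. Now at \emph{any} point $(x_{10},\dots,x_{n0},y_0)$ with $g_1(x_{10},\dots,x_{n0}) = 0$ we get $g(x_{10},\dots,x_{n0}) = h(x_{10},\dots,x_{n0}) = 0$, and hence $f(x_{10},\dots,x_{n0},y_0) = g(x_{10},\dots,x_{n0})\,y_0 + h(x_{10},\dots,x_{n0}) = 0$. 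So for this particular irreducible factor $g_1$ of $g$ there is no point of the form required in the hypothesis, a contradiction. Therefore $\gcd(f,g) = 1$, and (i) applies to conclude $f$ is irreducible.

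There is no genuine obstacle here; the argument is entirely elementary. The only things I would be careful to spell out are the two bookkeeping points noted above — that a divisor of a $y$-free polynomial is $y$-free (which is what lets me pass freely between divisibility in $R$ and in $S$), and the handling of the trivial case $g\equiv 0$. The conceptual content is just the linearity of $f$ in $y$ together with the observation in (ii) that a common prime divisor of $g$ and $h$ would force $f$ to vanish identically on $V(g_1)\times\C$.
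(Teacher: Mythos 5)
Your proof is correct and takes essentially the same route as the paper's: both write $f = g\,y + h$, exploit $\deg_y f = 1$ to force one factor into $\C[x_1,\dots,x_n]$, and reduce (ii) to (i) by observing that an irreducible common factor of $f$ and $g$ would make $f$ vanish on $V(g_1)\times\C$. The only difference is cosmetic: in (i) the paper routes the argument through the Gauss Lemma and contents (showing $c(f)=1$, hence the $y$-free factor $h_1 = c(h_1)$ divides $1$), whereas you compare $y$-coefficients directly to see the $y$-free factor $B$ divides both $g$ and $h$ and hence $\gcd(f,g)=1$; this is slightly more elementary but carries the same content, and your explicit treatment of the degenerate case $g\equiv 0$ is a small improvement in rigor over the paper, which tacitly assumes it away.
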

\begin{proof}[Proof of Lemma \ref{Lem7.4}] \par
i) is a consequence of the Gauss lemma applied to the polynomial ring 
$R[y] = \C[x_1, \dots , x_n, y]$ where $R = \C[x_1, \dots , x_n]$.  By the 
hypothesis, $f = g\cdot y + g_0$ has degree $1$ in $y$ with $g_0, g \in R$.  
Then, $\text{content}(f) = 1$ provided $\gcd(g_0, g) (= \gcd(f, g)) = 1$.  
The assumptions in ii) imply $\gcd(f, g) = 1$.
\end{proof}

\section{Basic Matrix Computations for Block Representations}
\label{S:MatrComps}
\par
To apply the results of the preceding sections, we must first perform
several basic calculations for two basic families of representations.  
While the calculations themselves are classical and straightforward,
we collect them 
together  in a form immediately applicable to the towers of 
representations we consider.  We let $M_{m, p}$ denote the space of 
$m \times p$ complex matrices.  We consider the following 
representations.
\flushpar
\begin{itemize}
\item[i)] the {\it linear transformation representation} on $M_{m, p}$: 
defined by
\begin{align}
\label{Eqn8.1a}
\psi : \GL_m(\C) \times \GL_p(\C) &\to \GL(M_{m, p})  \\
\psi(B, C)(A) \,\, &= \,\, B\, A\, C^{-1}  \notag
\end{align}

\item[ii)] the {\it bilinear form representation} on $M_{m, m}$: 
defined by
\begin{align}
\label{Eqn8.1b}
\theta : \GL_m(\C)  &\to \GL(M_{m, m})  \\
\theta(B)(A) \,\, &= \,\, B\, A\, B^T . \notag
\end{align}
\end{itemize}
\par
We will then further apply these computations to the restrictions to
families of solvable subgroups and subspaces which form towers
$\rho_{\ell} : G_{\ell} \to \GL(V_{\ell})$ of representations.  For these
representations and their restrictions, we will carry out the following.
\par
\begin{enumerate}
 \item identify a flag of invariant subspaces $\{ V_j\}$;
\item  from among the invariant subspaces, identify distinguished
subspaces  $W_j$ and the corresponding normal subgroups $K_j = \ker( G
\to \GL(V/W_j))$;
\item  compute the representation vector fields for a basis of the Lie
algebra;  and
\item compute the relative coefficient matrix for the representation of
$K_j/K_{j-1}$ on $W_j/W_{j-1}$ in $V/W_{j-1}$ using special bases for
the Lie algebra $\bk_j/ \bk_{j-1}$ (the Lie algebra of $K_j/K_{j-1}$) and
$W_j/W_{j-1}$ to determine the  diagonal blocks in the block
representation.
\end{enumerate}
\par

\subsection*{Linear Transformation Representations} \hfill
\par
Next, we let $B_m$ denote the Borel subgroup of $\GL(\C^m)$ consisting 
of
invertible lower triangular matrices, and $B_p^T$ denote the subgroup of
$\GL(\C^p)$ consisting of invertible upper triangular matrices (this is the
transpose of $B_p$).  We consider the representation $\rho$ of $B_m
\times B_p^T$ on $M_{m, p}$ obtained by restricting the linear
transformation  representation $\psi$ .
Eventually we will be interested in the cases $p = m$ or $m+1$.
\par
\vspace{1ex} \flushpar
\subsubsection*{Invariant Subspaces and Kernels of Quotient
Representations}  \hfill
\par
To simplify notation, for fixed $m$ and $p$ we denote $M_{m, p}$ as $M$.
We first define for given $0 \leq \ell \leq m$ and  $0 \leq k \leq p$ the
subspace $M^{(\ell , k)}$ of $M$ which consists of matrices for which the
upper left-hand $(m -\ell) \times (p -k)$ submatrix is $0$.  Thus, $\dim
M^{(\ell , k)}$ decreases with decreasing $\ell$ and $k$.  Given $m$ and
$p$ we let $E_{i, j}$ denote the elementary $m \times p$ matrix with $1$
in the $i,j$--th position, and $0$ elsewhere. \par
We first observe
\begin{Lemma}
\label{Lem8.2}
The subspaces $M^{(\ell , k)}$ are invariant subspaces for the
representation of $B_m\times B_p^T$.
\end{Lemma}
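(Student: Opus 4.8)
The plan is to reduce the claim to two elementary observations about how left multiplication by a lower-triangular matrix and right multiplication by an upper-triangular matrix each interact with the block-vanishing condition defining $M^{(\ell,k)}$. Since the action factors as $\psi(B,C)(A) = B\,(A\,C^{-1})$ with $B \in B_m$ and $C \in B_p^T$, it suffices to show separately that left multiplication by any invertible lower-triangular $m\times m$ matrix and right multiplication by $C^{-1}$ each carry $M^{(\ell,k)}$ into itself; composing the two then gives the result.

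First I would record the trivial but necessary fact that $C \in B_p^T$ invertible upper triangular forces $C^{-1}$ to be upper triangular as well, so it is enough to analyze right multiplication by an arbitrary invertible upper-triangular $p\times p$ matrix. Next, fix $A \in M^{(\ell,k)}$, i.e. $A_{s t} = 0$ whenever $s \le m-\ell$ and $t \le p-k$. For left multiplication, write $(BA)_{ij} = \sum_{s} B_{is} A_{sj}$; lower-triangularity of $B$ annihilates all terms with $s > i$, so for $i \le m-\ell$ and $j \le p-k$ every surviving term has $s \le i \le m-\ell$ together with $j \le p-k$, hence $A_{sj}=0$ and $(BA)_{ij}=0$. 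Thus $BA \in M^{(\ell,k)}$. The argument for right multiplication is dual: writing $(AC^{-1})_{ij} = \sum_t A_{it}(C^{-1})_{tj}$, upper-triangularity of $C^{-1}$ annihilates all terms with $t > j$, so for $i \le m-\ell$, $j \le p-k$ every surviving term has $t \le j \le p-k$ and $i \le m-\ell$, giving $A_{it}=0$ and $(AC^{-1})_{ij}=0$.

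Combining, the assignment $A \mapsto A C^{-1} \mapsto B(A C^{-1})$ keeps the upper-left $(m-\ell)\times(p-k)$ submatrix identically zero, so $\psi(B,C)(A) \in M^{(\ell,k)}$, which is the asserted invariance. The only point requiring care is the index bookkeeping — checking that triangularity truncates each sum to precisely the range of indices where the entries of $A$ are already known to vanish — together with the observation that inverses of invertible triangular matrices stay triangular of the same type; beyond this routine verification there is no substantive obstacle, and indeed the elementary matrices $E_{i,j}$ introduced just before the lemma give a convenient explicit basis if one prefers to phrase the computation that way.
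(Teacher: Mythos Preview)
Your proof is correct and follows essentially the same approach as the paper: both verify directly that the triangular shapes of $B$ and $C^{-1}$ force the upper-left $(m-\ell)\times(p-k)$ block of $BAC^{-1}$ to vanish whenever that block of $A$ does. The only difference is packaging---the paper carries out the computation in one step via a block-matrix product (partitioning rows as $(m-\ell)+\ell$ and columns as $(p-k)+k$), while you split it into two entry-by-entry verifications; the content is identical.
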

\begin{proof}
We partition $m$ into $m - \ell$ and $\ell$ and $p$ into $p-k$ and $k$, and
write our matrices in block forms with the rows and columns so
partitioned.  Then,
\begin{equation}
\label{Eqn8.3}  \begin{pmatrix}
B^{\prime} &0 \\
*  & *
\end{pmatrix}\cdot \begin{pmatrix}
A^{\prime}  &* \\
*  & *
\end{pmatrix}\cdot \begin{pmatrix}
C^{\prime\, -1} &* \\
0  & *
\end{pmatrix} \quad = \quad   \begin{pmatrix}
B^{\prime}\,A^{\prime}\,C^{\prime\, -1}  &* \\
*  & *
\end{pmatrix} \,\, .
\end{equation}
Then, (\ref{Eqn8.3}) shows that if $A^{\prime} = 0$ then so is
$B^{\prime}\,A^{\prime}\,C^{\prime\, -1}$.
\end{proof}
Then, we obtain an induced quotient representation
$$\rho_{\ell, k} : B_m \times B_p^T \to\GL(M/M^{(\ell, k)}) \, .$$
We consider the subgroup $K^{(\ell, k)}$ consisting of elements of
$B_m \times B_p^T$ of the form
\begin{equation}
\label{Eqn8.4}
\left(
\begin{pmatrix} \lambda \cdot I_{m - \ell} & 0 \\ * & * \end{pmatrix},
\begin{pmatrix} \lambda \cdot I_{p - k} & * \\ 0 & * \end{pmatrix}
\right),
\qquad \lambda\in\C^* \, .
\end{equation}
This subgroup has the following role.
\begin{Lemma}
\label{Lem8.4}
For the quotient representation $\rho_{\ell, k}$,  $\ker (\rho_{\ell, k}) =
K^{(\ell, k)}$.
\end{Lemma}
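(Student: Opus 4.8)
The plan is to unwind the kernel condition directly in block--matrix form and then invoke the elementary fact that a two--sided multiplication fixing every matrix must be by reciprocal scalars.

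First I would partition the rows of an $m\times p$ matrix into the first $m-\ell$ and the last $\ell$, and the columns into the first $p-k$ and the last $k$. Relative to this partition an element $B\in B_m$ is lower triangular, hence of the form $\begin{pmatrix} B' & 0 \\ * & *\end{pmatrix}$ with $B'$ its leading $(m-\ell)\times(m-\ell)$ block, while $C\in B_p^T$ is upper triangular, $C=\begin{pmatrix} C' & * \\ 0 & *\end{pmatrix}$, and $C^{-1}$ is again upper triangular with leading block $(C')^{-1}$. Exactly as in the computation (\ref{Eqn8.3}) in the proof of Lemma \ref{Lem8.2}, the leading $(m-\ell)\times(p-k)$ block of $BAC^{-1}$ equals $B'A'(C')^{-1}$, where $A'$ is the leading block of $A$; in particular it depends only on $A'$. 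Since $M^{(\ell,k)}$ is precisely the set of matrices whose leading block vanishes, $(B,C)\in\ker(\rho_{\ell, k})$ if and only if $B'A'(C')^{-1}=A'$ for every $A'\in M_{m-\ell,\,p-k}$.

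Next I would apply the elementary lemma: if $P\,X\,Q=X$ for all $X\in M_{a,b}$, then $P=\lambda I_a$ and $Q=\lambda^{-1}I_b$ for some $\lambda\in\C^*$. This follows by substituting the elementary matrices $E_{i,j}$: the product $P E_{i,j} Q$ is the outer product of the $i$-th column of $P$ with the $j$-th row of $Q$, and requiring each of these to equal $E_{i,j}$ forces $P$ and $Q$ to be diagonal with mutually reciprocal constant diagonal entries. Taking $P=B'$ and $Q=(C')^{-1}$, we get $B'=\lambda I_{m-\ell}$ and $(C')^{-1}=\lambda^{-1}I_{p-k}$, i.e. $C'=\lambda I_{p-k}$; unwinding the block shapes of $B$ and $C$ this says precisely that $(B,C)$ has the form (\ref{Eqn8.4}), so $\ker(\rho_{\ell, k})\subseteq K^{(\ell,k)}$. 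The reverse inclusion is the trivial converse: if $B'=\lambda I_{m-\ell}$ and $C'=\lambda I_{p-k}$ then $B'A'(C')^{-1}=A'$ for all $A'$, so $(B,C)$ acts trivially on $M/M^{(\ell,k)}$.

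The only delicate point -- and the closest thing to an obstacle here -- is the bookkeeping: one must check that the leading block of $BAC^{-1}$ genuinely involves only the leading blocks of $B$ and of $C^{-1}$ (so that the strictly lower, resp. upper, off--diagonal blocks of $B$ and $C$ are left completely unconstrained, as they must be for $K^{(\ell,k)}$), and that the scalar produced for $B'$ is forced to be the reciprocal of the one produced for $(C')^{-1}$, so that a single $\lambda$ appears, matching the form in (\ref{Eqn8.4}). Both are immediate from the triangular block shapes and the $E_{i,j}$ substitution above.
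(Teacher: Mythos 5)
Your proof is correct and follows essentially the same approach as the paper: reduce via the block computation (\ref{Eqn8.3}) to the condition $B'A'(C')^{-1}=A'$ for all $A'$, then substitute the elementary matrices $E_{i,j}$ to force $B'$ and $C'$ to be the same scalar multiple of the identity. Packaging the $E_{i,j}$ substitution as a standalone lemma is a cosmetic difference only.
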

\begin{proof}[Proof of Lemma \ref{Lem8.4}]
We use the partition as in equation (\ref{Eqn8.3}).  The product is in $\ker
(\rho_{\ell, k})$ if and only if
\begin{equation}
\label{Eqn8.5}
   B^{\prime}\,A^{\prime}\,C^{\prime\, -1}  \,\,=  \,\, A^{\prime}
\end{equation}
 for all $(m-\ell) \times (p-k)$ matrices $A^{\prime}$.  It follows that
$K^{(\ell, k)} \subseteq \ker (\rho_{\ell, k})$.  \par
For the reverse inclusion, we let $B^{\prime} = (b_{i, j})$ and $C^{\prime} =
(c_{i, j})$ and examine (\ref{Eqn8.5}) for $A^{\prime} = E_{i, j}$, the
$(m-\ell) \times (p-k)$--elementary matrices for $1 \leq i \leq m-\ell$,
and $1 \leq j \leq p-k$.  We see that $b_{i, j} = 0$ and $c_{i, j} = 0$ for
$i \neq j$, and then $b_{i, i} = b_{j, j}$ and $c_{i, i} = c_{j, j}$ for all $i$
and $j$.  This implies $B^{\prime} = \lambda I_{m-\ell}$, $C^{\prime} =
\kappa I_{p-k}$, and (\ref{Eqn8.3}) implies $\lambda = \kappa$.
\end{proof}
\par
We note that a consequence of Lemma \ref{Lem8.4}, is that the
representation $\rho$ is not faithful, and hence cannot be an
equidimensional representation.  We shall see in the next section that by
restricting to appropriate solvable subgroups we can overcome this in
different ways.   First, we determine the associated representation vector
fields. \par
\vspace{1ex} \flushpar
\subsubsection*{Representation Vector Fields}  \hfill
\par
The derivative of $\rho$ at $(I_m, I_p)$ is given by straightforward
calculation to be
\begin{equation}
\label{Eqn8.7}
    d\rho(B, C)(A) \,\, = \,\, B\, A -  A\, C
\end{equation}
for $(B, C) \in \gl_m \oplus \gl_p$ and $A \in M$.  This computes $\pd{
}{t}(\exp(tB)\, A\, \exp(tC)^{-1})_{|t = 0}$, and hence is the representation
vector field corresponding to $(B, C) $ evaluated at $A$.  We obtain two
sets of vector fields
\begin{equation}
\label{Eqn8.8}
\xi_{i, j} = \xi_{(E_{i, j}, 0)}  \quad \makebox{ and } \quad  \zeta_{i, j} =
\xi_{(0, E_{i, j})}\, \, .
\end{equation}
We calculate them using $(\ref{Eqn8.7})$ to obtain for $A = (a_{i,j})$, 
\begin{align}
\label{Eqn8.9}
\xi_{k, \ell}(A) \,\, &= \,\, E_{k, \ell}\, A \,\, = \,\, \sum_{s = 1}^{p} 
a_{\ell,s} E_{k, s} \quad \makebox{ and } \\
\quad  \zeta_{k, \ell}(A) \,\, &= \,\,  - A\,E_{k, \ell} \,\, = \,\, - \sum_{s
= 1}^{m} a_{s, k} E_{s, \ell}\,\, . \notag
\end{align}
These can be described as follows: $\xi_{k, \ell}$
associates to the matrix $A$ the matrix  all of whose rows are zero
except for the $k$--th which is the $\ell$--row of $A$.  Similarly  
$\zeta_{k, \ell}$ associates to the matrix $A$ the matrix  all
of whose columns are zero except for the $k$--th column which is minus
the $\ell$--th column of $A$.

\subsection*{Bilinear Form Representations} \hfill
\par
We next make analogous computations for the bilinear form
representations.
 \par
\vspace{1ex} \flushpar
\subsubsection*{Invariant Subspaces and Kernels of Quotient
Representations}  \hfill

For the bilinear form representation $\theta$ on $M = M_{m, m}$, we
observe that it is obtained by composition of $\rho$ (for the case $p = m$)
with the Lie group homomorphism $\gs : B_m \to B_m \times B_m^T$
defined by $\gs(B) = (B, (B^{-1})^T)$.  Since $\theta = \rho\circ \gs$, it is
immediate that the invariant subspaces $M^{(\ell , k)}$ for $B_m \times
B_m^T$ via $\rho$ are also invariant for $B_m$ via $\theta$.  Also, it
immediately follows that for the quotient representation
$$  \theta_{\ell, k} : B_m \to \GL(M/M^{(\ell , k)})\, , $$
$\ker(\theta_{\ell, k}) = \gs^{-1}(\ker(\rho_{\ell, k}))$.  However, by
Lemma \ref{Lem8.4}, $\ker(\rho_{\ell, k}) = K^{(\ell, k)}$.  Thus, an 
element $B \in\ker(\theta_{\ell, k})  = \gs^{-1}(K^{(\ell, k)})$ has the form
\begin{equation}
\label{Eqn8.9b}
B  \,\, = \,\,  \begin{pmatrix} \lambda \cdot I_{\ell} & 0 \\ * & *
\end{pmatrix},
\qquad \lambda\in\C^* \,\, .
\end{equation}
Also, by (\ref{Eqn8.4})
\begin{equation}
\label{Eqn8.10}
(B^{-1})^T \,\, = \,\,  \begin{pmatrix} \lambda^{-1} \cdot I_{\ell} & * \\ 0
& * \end{pmatrix}  \,\, = \,\, \begin{pmatrix} \lambda \cdot I_{k} & * \\ 0
& * \end{pmatrix} \,\, .
\end{equation}
Hence, $\lambda = \pm 1$, and
\begin{equation}
\label{Eqn8.11}
B  \,\, = \,\,  \begin{pmatrix} \pm I_{r} & 0 \\ * & * \end{pmatrix},
\qquad \makebox{ where } r = \max\{\ell, k\}\, .
\end{equation}
We summarize this in the following Lemma.  
\begin{Lemma}
\label{Lem8.12}
For the bilinear form representations,
\begin{enumerate}
\item The $M^{(\ell , k)}$ are invariant subspaces.
\item  The kernel of the quotient representation $\theta_{\ell, k}$
consists of the elements of the form (\ref{Eqn8.11}).
\end{enumerate}
\end{Lemma}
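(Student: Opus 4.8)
The plan is to deduce both assertions from the corresponding facts about the linear transformation representation $\rho$ on $M = M_{m,m}$ (the case $p = m$), namely Lemmas \ref{Lem8.2} and \ref{Lem8.4}, by transporting them through the group homomorphism $\gs : B_m \to B_m \times B_m^T$, $\gs(B) = (B, (B^{-1})^T)$, using the factorization $\theta = \rho \circ \gs$ recorded above. Since $\gs$ is a homomorphism of linear algebraic groups and $\theta$ factors through it, the invariant subspaces and the kernels of the quotient representations for $\theta$ should be obtained simply by pulling back those for $\rho$.

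For part (1) this is immediate: for every $B \in B_m$ we have $\theta(B) = \rho(\gs(B))$ with $\gs(B) \in B_m \times B_m^T$, so Lemma \ref{Lem8.2} gives $\theta(B)(M^{(\ell,k)}) \subseteq M^{(\ell,k)}$; hence each $M^{(\ell,k)}$ is $\theta$--invariant and the quotient representations $\theta_{\ell,k}$ are defined. For part (2), the factorization descends to $\theta_{\ell,k} = \rho_{\ell,k} \circ \gs$, whence $\ker(\theta_{\ell,k}) = \gs^{-1}(\ker(\rho_{\ell,k})) = \gs^{-1}(K^{(\ell,k)})$ by Lemma \ref{Lem8.4}. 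It then remains to describe $\gs^{-1}(K^{(\ell,k)})$ concretely. Unwinding the membership $(B, (B^{-1})^T) \in K^{(\ell,k)}$ against the normal form (\ref{Eqn8.4}) imposes, for a common scalar $\lambda \in \C^*$, a scalar block $\lambda I$ in the top-left corner of the lower triangular matrix $B$ -- putting $B$ in the shape (\ref{Eqn8.9b}) -- together with a scalar block $\lambda I$ in the top-left corner of the upper triangular matrix $(B^{-1})^T$. Since $B$ already carries a scalar corner block, inverting and transposing (the computation (\ref{Eqn8.10})) rewrites the second condition as a condition on the same region of $B$ but with scalar $\lambda^{-1}$; overlapping the two regions forces $\lambda = \lambda^{-1}$, i.e. $\lambda = \pm 1$, and comparing their sizes forces the scalar block of $B$ to have the size $r = \max\{\ell, k\}$. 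This is precisely the normal form (\ref{Eqn8.11}), which yields (2).

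I expect the only genuine work to be this last bookkeeping step for (2): one should split into the cases $\ell \le k$ and $\ell \ge k$ to confirm that the two triangularity-and-scalarity conditions on $B$ and on $(B^{-1})^T$ are jointly satisfiable precisely when $\lambda = \pm 1$ and the common scalar block has size $\max\{\ell, k\}$, with no further constraint on the strictly lower-triangular part of $B$. Apart from that index chase, every step is a formal consequence of the factorization $\theta = \rho \circ \gs$ together with Lemmas \ref{Lem8.2} and \ref{Lem8.4}.
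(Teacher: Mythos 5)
Your argument is correct and is essentially identical to the paper's, which presents the proof in the running text immediately preceding the lemma: both parts are obtained by transporting Lemmas \ref{Lem8.2} and \ref{Lem8.4} through the factorization $\theta = \rho \circ \gs$. The kernel description then follows from exactly the same index comparison of the scalar corner blocks of $B$ and $(B^{-1})^T$ carried out in \eqref{Eqn8.9b}--\eqref{Eqn8.11}.
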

\par
 \par
\vspace{1ex} \flushpar
\subsubsection*{Representation Vector Fields}  \hfill
\par
We can compute the representation vector fields either by using the
naturality of the exponential diagram or by directly computing $d\theta$.
In the first case, we see that corresponding to $E_{k, \ell}$ is the vector
field $\xi_{E_{k, \ell}}  =  \xi_{k, \ell} - \zeta_{\ell, k}$ using the
notation of (\ref{Eqn8.8}).  \par
Alternatively, the corresponding representation for Lie algebras $\bb_m$
sends $B \in \bb_m$ to the linear transformation sending $A \mapsto B\,
A + A\, B^T$.  This also defines the corresponding representation vector
field $\xi_B$ at $A$.  Applied to $E_{k, \ell}$, we obtain
\begin{equation}
\label{Eqn8.13}
\xi_{E_{k, \ell}}(A) \,\, = \,\,  E_{k, \ell}\, A \, + \, A\, E_{\ell, k}\,\, .
\end{equation}
\par
This action can be viewed as the action on bilinear forms defined by
matrices $A$.  We will eventually restrict this action to symmetric and
skew--symmetric bilinear forms. We apply the above analysis to this
representation.  \par
To continue further, we next identify the solvable subgroups to which we 
will
restrict the representations in order to obtain equidimensional
representations.

\par
\section{Cholesky-Type Factorizations as Block Representations of
Solvable Linear Algebraic Groups}
\label{S:CholFact}
\par
In this section, we explain how the various forms of classical 
``Cholesky-type factorization'' can be understood via representations 
of solvable groups on spaces of matrices leading to the construction of 
free (or free*) divisors containing the variety of singular matrices.  \par
Traditionally, it is well--known that certain matrices can be put in
normal forms after multiplication by appropriate matrices.  The basic
example is for symmetric matrices, where a symmetric matrix $A$ can be
diagonalized by composing it with an appropriate invertible matrix $B$ to
obtain $B\cdot A \cdot B^T$.  The choice of $B$ is highly nonunique.  For
real matrices, Cholesky factorization gives a unique choice for $B$
provided $A$ satisfies certain determinantal conditions.  
More generally, by \emph{Cholesky-type factorization} we mean a
general collection of results for factoring real matrices into products of
upper and lower triangular matrices.  These factorizations are 
traditionally used to simplify the solution of certain problems in applied 
linear algebra.  For the cases of symmetric matrices and LU decomposition 
for general $m \times m$ matrices see \cite{Dm} and for skew symmetric 
matrices see \cite{BBW}. \par
Here we state the versions of these theorems for complex matrices.  The 
complex versions can be proven either by directly adapting the real proofs, 
as in \cite{P}, or they will also follow from Theorem \ref{CholTow}.  \par
 Let $A = (a_{i j})$ denote an $m \times m$ complex matrix which
may be symmetric, general, or skew-symmetric.  We let $A^{(k)}$ denote
the $k \times k$ upper left hand corner submatrix. \par

\begin{Thm}[Complex Cholesky-Type Factorization] \hfill
\label{ComplCholFac}
\begin{enumerate}
\item  {\em Complex Cholesky factorization: } If $A$ is a
complex symmetric matrix with $\det (A^{(k)}) \neq 0$  for $k = 1, \dots,
m$, then there exists a lower triangular matrix $B$, which is unique up to
multiplication by a diagonal matrix with diagonal entries $\pm 1$, so that
$A = B \cdot B^T$.
\item {\em Complex LU factorization: } If $A$ is a general complex
matrix with $\det (A^{(k)}) \neq 0$  for $k = 1, \dots, m$, then there
exists a unique lower triangular matrix $B$ and a unique upper triangular
matrix $C$ which has diagonal entries $= 1$ so that $A = B \cdot C$.
\item  {\em Complex Skew-symmetric Cholesky factorization : } If $A$ is
a skew-symmetric matrix for $m  = 2\ell$ with $\det (A^{(2k)}) \neq 0$
for $k = 1, \dots, \ell$, then there exists a lower block triangular matrix
$B$ with $2 \times 2$--diagonal blocks of the form a)
in (\ref{Eqn9.1}) with complex entries $r$ (i.e. $ = r\cdot I$), so that $A = 
B \cdot J \cdot B^T$, for $J$
the $2\ell \times 2\ell$ skew-symmetric matrix with
$2 \times 2$--diagonal blocks of the form b) in (\ref{Eqn9.1}).  Then, $B$ 
is unique up to multiplication by block diagonal matrices with a $2 \times 
2$ diagonal blocks $= \pm I$.  For $m = 2\ell + 1$, then there is again a 
unique factorization except now $B$ has an additional entry of $1$ in the 
last diagonal position, and $J$ is replaced by $J^{\prime}$ which has $J$ 
as the upper left corner $2\ell \times 2\ell$ submatrix, with remaining 
entries $ = 0$.
\begin{equation}
\label{Eqn9.1}
a) \qquad \begin{pmatrix}
r &0 \\ 0  & r
\end{pmatrix} , \quad r > 0\qquad \makebox{ and } \quad b) \qquad  \,\,
\begin{pmatrix}
0 & -1 \\ 1  &  0
\end{pmatrix}
\end{equation}
\end{enumerate}
\end{Thm}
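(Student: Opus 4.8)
The plan is to prove all three parts by induction on the matrix size, in each case peeling off the last row and column --- or, in the skew--symmetric case, the last pair of rows and columns --- and reducing the existence and uniqueness of the extra data in $B$ (and in $C$) to the invertibility of the triangular factor already constructed together with the existence of square roots in $\C$. At the end I will indicate how each statement alternatively follows from Theorem \ref{CholTow}: the factorization $A = B\cdot B^{T}$ (respectively $A = B\cdot C$, $A = B\cdot J\cdot B^{T}$) is precisely the assertion that $A$ lies in the orbit of the identity (respectively of $J$) under the relevant solvable group action, with the asserted ambiguity being the stabilizer of that base point; once Theorem \ref{CholTow} identifies the exceptional orbit variety with the vanishing locus of the relevant product of leading minors ($\prod_k \det(A^{(k)})$, respectively $\prod_k \det(A^{(2k)})$), the hypotheses say exactly that $A$ lies in the open orbit. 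I will present the direct inductive argument, which is self--contained and adapts the classical real proofs (see \cite{P}).

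For part (1), write $A$ in block form with leading $(m-1)\times(m-1)$ block $A^{(m-1)}$, last column $a$ and corner entry $\alpha$, and correspondingly seek $B = \begin{pmatrix} B' & 0 \\ b^{T} & \beta\end{pmatrix}$ with $B'$ lower triangular. The hypothesis gives $\det(A^{(k)})\neq 0$ for $k = 1,\dots,m-1$, so by induction $A^{(m-1)} = B'(B')^{T}$ with $B'$ lower triangular and invertible, unique up to a diagonal $\pm 1$ factor. Expanding $BB^{T}$ forces $B'b = a$, whose unique solution is $b = (B')^{-1}a$, together with $\beta^{2} = \alpha - b^{T}b$. The Schur complement identity gives $\alpha - b^{T}b = \det(A)/\det(A^{(m-1)})$, nonzero by hypothesis; hence $\beta$ exists in $\C$, there are exactly the two choices $\beta = \pm(\alpha - b^{T}b)^{1/2}$, and every admissible $B$ arises this way --- which is precisely the claimed $\pm 1$ diagonal ambiguity. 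Part (2) is the same bookkeeping with a second, unit--diagonal upper triangular factor: after the inductive step $A^{(m-1)} = B'C'$, expansion forces the last column of $C$ above the corner to be $(B')^{-1}a$, the last row of $B$ before the corner to be $c^{T}(C')^{-1}$ (where $c^{T}$ is the last row of $A$), and the corner entry of $B$ to be $\alpha - b^{T}d$, all uniquely determined; no square root, hence no sign, appears, and one notes that this corner entry equals $\det(A)/\det(A^{(m-1)})$, though only the invertibility of the smaller triangular factors, i.e.\ $\det(A^{(k)})\neq 0$ for $k < m$, is actually used in the recursion.

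For part (3) the induction runs in steps of two. Partition $A$ as $(2\ell - 2) + 2$ with blocks $A^{(2\ell-2)}$, $P$, $-P^{T}$, $Q$, where $Q$ is $2\times 2$ skew--symmetric and hence a scalar multiple of the standard $2\times 2$ block of $J$; seek $B = \begin{pmatrix} B' & 0 \\ R & r\,I_{2}\end{pmatrix}$. By induction $A^{(2\ell-2)} = B'\,J'\,(B')^{T}$ with $B'$ of the required form and invertible, where $J'$ denotes the $(2\ell-2)\times(2\ell-2)$ block of $J$. Expanding $B\,J\,B^{T}$ forces $R^{T} = (J')^{-1}(B')^{-1}P$ uniquely, since $B'$ and $J$ are invertible, and then the trailing $2\times 2$ block equation reads $r^{2}J_{2} = Q - R\,J'\,R^{T}$; the right--hand side is again $2\times 2$ skew--symmetric, so this is genuinely a single scalar equation $r^{2} = c$, and substituting the expression for $R$ identifies $c$ with the scalar representing the Schur complement of $A^{(2\ell-2)}$ in $A^{(2\ell)}$, whence $c^{2} = \det(A^{(2\ell)})/\det(A^{(2\ell-2)})\neq 0$. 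Thus $r = \pm c^{1/2}$, giving the stated $\pm I_{2}$ ambiguity one block at a time; peeling $2\times 2$ blocks off a $J$--preserving $B$ of the admissible form, from the top--left corner downward, confirms that the total ambiguity is exactly multiplication by block--diagonal matrices with $\pm I_{2}$ blocks. For $m = 2\ell + 1$ one appends one further row to $B$ with trailing entry $1$; since $J^{\prime}$ (in the paper's notation) has a zero last row and column, expanding $B\,J^{\prime}\,B^{T}$ leaves the new row of $B$ uniquely determined by the equation analogous to the one for $R$ above, with no extra condition, and only $\det(A^{(2k)})\neq 0$ for $k \leq \ell$ is used.

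I expect part (3) to be the main obstacle: one must carefully carry the skew form $J$ through the block expansion, verify that the trailing block equation really collapses to a scalar square--root equation, and match the resulting scalar to the determinantal hypothesis $\det(A^{(2k)})\neq 0$ rather than to some unrelated minor. Parts (1) and (2) are the classical computations and should be routine once the Schur complement identity $\det(A) = \det(A^{(m-1)})\cdot(\alpha - b^{T}b)$ and its LU analogue are in hand.
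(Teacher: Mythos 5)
Your proposal is correct and follows exactly the first route the paper itself indicates (``the complex versions can be proven \ldots by directly adapting the real proofs, as in \cite{P}''): a block induction peeling off the last row/column (or pair of rows/columns in the skew-symmetric case), with the Schur complement identity $\det(A)=\det(A^{(m-1)})(\alpha-b^Tb)$ and its analogues supplying the nondegeneracy of the corner equation. You also correctly note the paper's alternative derivation from Theorem~\ref{CholTow}, where the hypotheses place $A$ in the open orbit and the stated nonuniqueness is the isotropy subgroup of the base point, as the paper's Remark~3 after that theorem observes.
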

\par

\subsection*{Complex Cholesky Factorizations via Solvable Group
Representations}
 \par
We can view these results as really statements about representations
of solvable groups on spaces of $m \times m$ complex matrices which
will either be symmetric, general, or skew-symmetric (with $m$ even).
We consider for each of these cases the analogous representations of
solvable linear algebraic groups which we shall show form towers of
(possibly nonreduced) block representations for solvable groups.  \par
\vspace{1ex}
\flushpar
{\it  General $m \times m$ Complex Matrices : }
As earlier $M_{m, m}$ denotes the space of $m \times m$ general complex
matrices, with $B_m$ the Borel subgroup of invertible lower triangular
$m \times m$ matrices.  We also let $N_m$ be the unipotent subgroup of
$B_m^T$, consisting of the invertible upper triangular $m \times m$
matrices with $1$\rq s on the diagonal.  The representation of $B_m
\times N_m$ on $M_{m, m}$ is the restriction of the linear transformation
representation  (\ref{Eqn8.1a}).  
The inclusion homomorphisms  $B_{m-1} \times N_{m-1} \hookrightarrow
B_m \times N_{m} $ and inclusions $M_{m-1, m-1} \hookrightarrow M_{m,
m}$ are defined as in (\ref{Eqn9.2a}).
\begin{equation}
\label{Eqn9.2a}
B \mapsto \begin{pmatrix}
B &0 \\ 0  & 1
\end{pmatrix}
\qquad C \mapsto \begin{pmatrix}
C &0 \\ 0  & 1
\end{pmatrix} \qquad \makebox{ and } \qquad  A \mapsto \begin{pmatrix}
A &0 \\ 0  & 0
\end{pmatrix}
\end{equation}
These then define a tower of representations of connected solvable 
algebraic groups. \par
Second we consider restrictions of the bilinear form representations.  We
may decompose $M_{m, m}$, viewed as a representation of the Borel
subgroup $B_m$, as $M_{m, m} = \Sym_m \oplus \Sk_m$, where 
$\Sym_m$
denotes the space of $m \times m$ complex symmetric matrices and
$\Sk_m$ the space of skew--symmetric matrices.  Hence, we can restrict
the representation to each summand.  \par
\vspace{1ex} \flushpar
{\it  Complex Symmetric Matrices : }
The representation of $B_m$ on $\Sym_m$ is the restriction of the 
bilinear
form representation.  
The inclusion homomorphisms  $B_{m-1} \hookrightarrow B_m$ and
inclusions $\Sym_{m-1} \hookrightarrow \Sym_m$ are defined as in
(\ref{Eqn9.2a}) and
define a tower of solvable group representations.  \par
\vspace{1ex} \flushpar
{\it  Complex Skew-Symmetric Matrices : }
If instead we consider the representation on the summand $\Sk_m$, then
we further restrict to a subgroup of $B_m$.  For $m = 2\ell$ or 
$m = 2\ell + 1$, we let $D_m$ denote the subgroup of $B_m$ consisting of 
all lower triangle matrices of the type described in (3) of Theorem 
\ref{ComplCholFac}.  The representation of $D_m$ on $\Sk_m$ is the 
restricted representation.  
The inclusion homomorphism  $D_{m-1} \hookrightarrow D_m$ and
inclusion $\Sk_{m-1} \hookrightarrow \Sk_m$ are as in (\ref{Eqn9.2a}); 
and 
together these representations again form a tower of representations of 
connected solvable algebraic groups.
\\

The representations in each of these cases are equidimensional
representations.  Simple counting arguments show the groups and vector
spaces have the same dimension.  Moreover, in each case the subgroups
intersect the kernels of the representations $\psi$ and $\theta$ in finite
subgroups.  Hence they are equidimensional.  \par
The corresponding Cholesky-type factorization then asserts that the
representation has an open orbit and that the exceptional orbit variety is
defined by the vanishing of one of the conditions for the existence of the
factorization.  The open orbit is the orbit of one the basic matrices: the
identity matrix in the first two cases, and $J$ for the third.
\par
We let $A = (a_{i j})$ denote an $m \times m$ complex matrix which may
be symmetric, general, or skew-symmetric.  As above, $A^{(k)}$ denotes 
the $k \times k$ upper left-hand corner submatrix. 
Then, these towers have the following properties.
\begin{Thm} \hfill
\label{CholTow}
\begin{itemize}
\item[i)] The tower of representations of $\{ B_m\}$ on $\{ \Sym_m\}$ is 
a
tower of block representations and the exceptional orbit varieties are
free divisors defined by $\prod_{k =1}^{m} \det(A^{(k)}) = 0$. 
\item[ii)]  The tower of representations of $\{ B_m \times N_m\}$ on $\{
M_m\}$ is a tower of non-reduced block representations and the
exceptional orbit varieties are free* divisors defined by $\prod_{k =1}^{m}
\det(A^{(k)}) = 0$.
\item[iii)] The tower of representations of $\{ D_m \}$ on $\{ \Sk_m\}$ is
a tower of non-reduced block representations and the exceptional orbit
varieties are free* divisors defined by $\prod_{k =1}^{\ell} \det(A^{(2k)})
= 0$, where $m = 2\ell$ or $2\ell+1$.
\end{itemize}
\end{Thm}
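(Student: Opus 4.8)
The plan is to reduce the whole statement to the levelwise criterion of Proposition \ref{Prop7.2}. For each of the three families I would exhibit, at each stage of the tower, the invariant subspace $W_m\subset V_m$, the normal subgroup $K_m=\ker(G_m\to\GL(V_m/W_m))$ and the relative coefficient determinant $p_m$, and then verify the three hypotheses of that proposition. Once this is done, Proposition \ref{Prop7.2} yields the (possibly nonreduced) block structure of the tower, Theorem \ref{BlkRepFrDiv} turns this into the free (or free*) divisor $\cE_m$, and the inductive identity $\cE_m=\cD_m\cup\pi_m^*\cE_{m-1}$ from the proof of Theorem \ref{Towerthm}(iii) identifies the reduced defining equation of $\cE_m$ as the product, up the tower, of the new diagonal-block determinants. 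The base cases ($m=1$, or $m=2$ in the skew family) are immediate, and equidimensionality has already been recorded in \S\ref{S:CholFact}. For the skew family I would apply Proposition \ref{Prop7.2} to the subtower of even sizes $\{(D_{2\ell},Sk_{2\ell})\}$, since the $2\times 2$-block shape of $D_m$ and of the matrix $J$ forces one to advance two rows and columns at a time; the odd-sized spaces $Sk_{2\ell+1}$ are then treated by an entirely analogous decomposition.

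Step one, the subspaces and kernels. In all three cases the invariant subspaces are the spaces $M^{(\ell,k)}$ of \S\ref{S:MatrComps} (Lemma \ref{Lem8.2}), intersected with $Sym_m$ or $Sk_m$ in the bilinear-form cases. I would take $W_m=M^{(1,1)}$ for the $\{B_m\times N_m\}$--tower, $W_m=Sym_m\cap M^{(1,1)}$ for the $\{B_m\}$--tower, and $W_{2\ell}=Sk_{2\ell}\cap M^{(2,2)}$ for the skew subtower, so that the quotient $V_m/W_m$ is canonically the previous representation via the inclusions (\ref{Eqn9.2a}); condition iii) of Proposition \ref{Prop7.2} is then tautological from these choices. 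The kernels $K_m$ are read off from Lemmas \ref{Lem8.4} and \ref{Lem8.12}: $K_m$ consists of the elements whose relevant principal block equals $\pm$ the identity together with an arbitrary ``new'' lower strip, and a dimension count (using $\dim B_m=\dim Sym_m$, $\dim(B_m\times N_m)=\dim M_m$, $\dim D_{2\ell}=\dim Sk_{2\ell}$) gives $\dim K_m=\dim W_m$ and that $K_m$ meets the kernel of $\psi$ (resp. $\theta$) in a finite group, so condition i) holds.

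Step two, the relative coefficient determinants, using the explicit vector-field formulas (\ref{Eqn8.9}) and (\ref{Eqn8.13}). Choosing in $\bk_m/\bk_{m-1}$ the basis dual to the ``new'' group directions and in $W_m$ the basis of elementary symmetric/skew matrices along the last row(s) and column(s), these formulas show the relative coefficient matrix is block triangular, and evaluating its diagonal block gives, up to a nonzero scalar, $p_m=\det(A^{(m)})$ for the symmetric tower, $p_m=\det(A^{(m-1)})\det(A^{(m)})$ for the general tower, and $p_\ell$ a nontrivial power of the principal Pfaffian $\Pf(A^{(2\ell)})$ (times a power of $\Pf(A^{(2\ell-2)})$) for the skew tower. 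Step three, the arithmetic of these factors, via Lemma \ref{Lem7.4}: $\det(A^{(k)})$ is linear in $a_{kk}$ with derivative $\det(A^{(k-1)})$, and $\Pf(A^{(2k)})$ is linear in $a_{2k-1,2k}$ with derivative $\Pf(A^{(2k-2)})$, so induction together with Lemma \ref{Lem7.4}(ii) (exhibit a matrix with $A^{(k-1)}$ singular but $A^{(k)}$ nonsingular, and the Pfaffian analogue) shows each $\det(A^{(k)})$ and each $\Pf(A^{(2k)})$ is irreducible and that consecutive ones are relatively prime. Hence the symmetric tower has all $p_m$ reduced and pairwise relatively prime up the tower --- a genuine block structure, giving free divisors with defining equation $\prod_{k=1}^m\det(A^{(k)})=0$ --- while in the general tower $p_m$ shares the factor $\det(A^{(m-1)})$ with $p_{m-1}$ and in the skew tower $p_\ell$ is a proper power of $\Pf(A^{(2\ell)})$, so both give only a nonreduced block structure, i.e.\ free* divisors, supported on $\{\prod_{k=1}^m\det(A^{(k)})=0\}$ and $\{\prod_{k=1}^\ell\det(A^{(2k)})=0\}$ respectively.

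The hard part will be the skew-symmetric tower. The $2\times 2$-block structure of $D_m$ and of $J$ destroys the clean ``one row and column at a time'' pattern that makes the symmetric and general cases routine: the naive choice of $W_m$ produces a kernel $K_m$ of the wrong dimension, so one is forced to group rows and columns in pairs and choose the intermediate invariant subspaces accordingly. Carrying out the relative-coefficient-matrix computation in this setting --- keeping track of the scalar parameter attached to each diagonal $2\times 2$ block, showing that the diagonal block determinants are exactly the (powers of) principal Pfaffians, and confirming via Lemma \ref{Lem7.4} that those Pfaffians are irreducible and relatively prime in pairs --- is the most delicate part; the rest is bookkeeping with the formulas of \S\ref{S:MatrComps}.
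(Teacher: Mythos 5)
Your approach for parts (i) and (ii) matches the paper's: reduce to the levelwise criterion (Proposition \ref{Prop7.2}), take the invariant subspaces from $M^{(\ell,k)}\cap Sym_m$ (resp.\ $M^{(\ell,k)}$), read off the kernels from Lemmas \ref{Lem8.4} and \ref{Lem8.12}, compute the diagonal blocks from (\ref{Eqn8.9}) and (\ref{Eqn8.13}), and use Lemma \ref{Lem7.4}(ii) with the derivative $\partial\det(A^{(k)})/\partial a_{kk}=\det(A^{(k-1)})$ plus an explicit witness matrix to get irreducibility and reducedness in the symmetric case. The paper additionally refines the flag by inserting $M^{(\ell,\ell-1)}$ between $M^{(\ell-1,\ell-1)}$ and $M^{(\ell,\ell)}$ in the general-matrix case, which makes the two contributions $\det(A^{(m-1)})$ and $\det(A^{(m)})$ appear as separate blocks; your single composite block $p_m=\det(A^{(m-1)})\det(A^{(m)})$ is equivalent, and you correctly identify the repeated factor that forces the nonreduced conclusion.

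For part (iii) you diverge from the paper, and you have correctly flagged where the difficulty lies. You propose to work directly with $D_m$, advancing two rows and columns at a time (even-sized zero blocks), and to handle odd $m$ by an analogous scheme. The paper explicitly notes that $D_m$ does admit a non-reduced block representation via even-sized zero blocks, but deliberately avoids it: instead it replaces $D_m$ by the group $G_m$ whose $2\times 2$ diagonal blocks have the form $\begin{pmatrix}1 & 0 \\ 0 & r\end{pmatrix}$, $r\neq 0$, shows $G_m$ and $D_m$ have the same open orbit (via the subgroup $K$ of block-$\SL_2$ diagonal matrices fixing $J$), and then uses the full one-step flag $Sk_m^{(\ell,\ell)}$. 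With $G_m$ each diagonal block is either $A^{(\ell')}$ (whose determinant is $\Pf(A^{(\ell')})^2$) or $A^{(\ell'+1)}$ with one row and column deleted (whose determinant factors as $\Pf(A^{(\ell'+1)})\Pf(A^{(\ell'-1)})$ by the Pfaffian identities in \cite{MM}), and $m$ even and odd are handled uniformly. Your route buys a statement closer in form to the Cholesky factorization of Theorem \ref{ComplCholFac}(3), at the cost of $(m-\ell)\times(m-\ell)$ relative blocks per step and a separate odd case; the paper's route buys smaller blocks, explicit Pfaffian factorizations, and a uniform treatment, at the cost of a short extra argument identifying the two open orbits. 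If you pursue your own route, you should be aware that what you call $p_\ell$ will be a product like $\Pf(A^{(2\ell)})^a\Pf(A^{(2\ell-2)})^b$ with $a+b>1$, so the nonreducedness you are counting on will come not from sharing factors with $p_{\ell-1}$ but from the single block itself already being a nonreduced polynomial; Lemma \ref{Lem7.4} still gives irreducibility of each principal Pfaffian, so the conclusion goes through.
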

\par
\begin{Remark}  We make three remarks regarding this result. \par
 1)  Independently, Mond and coworkers \cite{BM}, \cite{GMNS}
in their work with reductive groups separately discovered the result for 
symmetric matrices by just directly applying the Saito criterion.  \par
2)  In the cases of general or skew-symmetric matrices, the exceptional 
orbit varieties are only free* divisors.  We will see in Theorems 
\ref{ModCholRep} and \ref{Thm.nonlinLiealg} that we can modify the 
solvable groups so the resulting representation gives a modified 
Cholesky-type factorization with exceptional orbit variety which still 
contains the variety of singular
matrices and which is a free divisor.  \par
3)  As a corollary of Theorem \ref{CholTow}, we deduce Cholesky-type 
factorization in the complex cases as exactly characterizing the elements 
belonging to the open orbit in each case.  The only point which has to be 
separately checked is the non-uniqueness, which is equivalent to 
determining the isotropy subgroup for the basic matrix in each case.
\end{Remark}
\par
\begin{proof}[Proof of Theorem \ref{CholTow}]
The proof will be an application of Proposition \ref{Prop7.2} for each of
the cases.  We begin with the case for the linear transformation
representation of $G_m = B_m \times N_m$ on $M_{m, m}$, the $m \times
m$ matrices.  We claim that the partial flag 
\begin{equation}
\label{Eqn.9.3a}
M \,\, = \,\,  M_{m, m}  \supset M^{(m-1, m-1)}   \supset  \dots M^{(1, 1)}
\supset M^{(0,0)}=0
\end{equation}
(using the notation of \S  \ref{S:MatrComps})
gives a nonreduced block representation.
By Lemma \ref{Lem8.4}, $K_{\ell} = K^{(\ell, \ell)}$
is the kernel of the quotient representation $\rho_{\ell, \ell} : G_m \to
\GL(M/M^{(\ell, \ell)})$.   We claim that together these give a nonreduced
block representation for $(G_m, M_{m, m})$.  \par
To show this, it is sufficient to compute the relative coefficient
matrix for the representation of $K_{\ell}/K_{\ell-1}$ on $M^{(\ell,
\ell)}/M^{(\ell - 1, \ell - 1)}$.  In fact, it is
useful to introduce
for $1\leq \ell< m$
a refinement of the decomposition by introducing
subrepresentations  $M^{(\ell, \ell)} \supset  M^{(\ell, \ell - 1)}  \supset
M^{(\ell - 1, \ell - 1)}$ in the sequence (\ref{Eqn.9.3a}), and the
corresponding kernels given by Lemma \ref{Lem8.4}
$K_{\ell} \supset K^{(\ell, \ell - 1)}  \supset K_{\ell-1}$.
\par
First, we consider the representation of $K_{\ell}/K^{(\ell, \ell-1)}$ on
$M^{(\ell, \ell)}/M^{(\ell, \ell - 1)}$.  To simplify notation, we let
$\ell^{\prime} = m - \ell$.  We use the complementary bases
$$  \{ E_{1\, \ell^{\prime} + 1},   E_{2\, \ell^{\prime} + 1}, \dots ,
E_{\ell^{\prime}\, \ell^{\prime} + 1}\} \quad \makebox{to  $M^{(\ell, \ell -
1)}$ in $M^{(\ell, \ell)}$, and } $$
$$  \{ (0,E_{1 \, \ell^{\prime} + 1}), (0,E_{2 \, \ell^{\prime} + 1}), \dots ,
(0,E_{\ell^{\prime}\, \ell^{\prime} + 1})\} \quad \makebox{ to $\bk^{(\ell,
\ell-1)}$ in $\bk^{(\ell, \ell)}$}. $$
Here $\bk^{(\ell, \ell)}/\bk^{(\ell, \ell-1)}$ is the Lie algebra of the
quotient group $N_m^{(\ell)}/N_m^{(\ell-1)}$, where $N_m^{(k)}$ denotes
the subgroup of $N_m$ consisting of matrices whose upper left $(m-k)
\times (m-k)$ submatrix is the identity.   \par
Using the notation of (\ref{Eqn8.8}) and \S  \ref{S:CholFact}, the
associated representation vector fields are  $\zeta_{j, \ell^{\prime} + 1} =
\xi_{(0, E_{j, \ell^{\prime} + 1})}$, $j = 1, \dots , \ell$.
Then, by using (\ref{Eqn8.9}), we compute the the relative coefficient
matrix with respect to the given bases and $A = (a_{i\, j})$
\begin{equation}
\label{Eqn9.4b}
 \zeta_{j, \ell^{\prime} + 1}(A)  \,\, = \,\, - \sum_{i = 1}^{m} a_{i, j}E_{i\,
\ell^{\prime} + 1} \, .
\end{equation}
Using (\ref{Eqn9.4b}), we see that with respect to the relative basis for
$M^{(\ell, \ell - 1)}$ in $M^{(\ell, \ell)}$ we obtain the relative coefficient
matrix $- (a_{i, j})$ for $i, j = 1, \dots \ell^{\prime}$.  For the $m \times
m$ matrix $A = (a_{i, j})$, this is the matrix $-A^{(\ell^{\prime})}$.
\par
Second, we consider the representation of $K^{(\ell, \ell-1)}/K_{\ell - 1}$
on $M^{(\ell, \ell - 1)}/M^{(\ell - 1, \ell - 1)}$.  Now we use the relative
bases
$$  \{ E_{\ell^{\prime} + 1\, 1},   E_{\ell^{\prime} + 1\, 2}, \dots ,
E_{\ell^{\prime} + 1\, \ell^{\prime} + 1}\} \quad \makebox{to  $M^{(\ell -
1, \ell - 1)}$ in $M^{(\ell, \ell - 1)}$, and } $$
$$  \{ (E_{\ell^{\prime} + 1\, 1},0), (E_{\ell^{\prime} + 1\, 2},0), \dots ,
(E_{\ell^{\prime} + 1\, \ell^{\prime} + 1},0)\} \quad \makebox{ to 
$\bk^{(\ell -
1, \ell-1)}$ in $\bk^{(\ell, \ell - 1)}$} \, . $$
Now $\bk^{(\ell - 1, \ell-1)}/\bk^{(\ell, \ell - 1)}$ is the Lie algebra of
the quotient group $B_m^{(\ell)}/B_m^{(\ell-1)}$, where $B_m^{(k)}$
denotes the subgroup of $B_m$ consisting of matrices whose upper left
$(m-k) \times (m-k)$ submatrix is the identity.  By (\ref{Eqn8.8}) the
associated representation vector fields  are $\xi_{\ell^{\prime} + 1, j} =
\xi_{(E_{\ell^{\prime} + 1, j}, 0)}$, $ j = 1, \dots , \ell^{\prime} + 1$.  An
argument analogous to the above using (\ref{Eqn8.9}) gives the relative
coefficient matrix to be the transpose of $A^{(\ell^{\prime} + 1)}$.
\par
Hence, we see that there will be contributions to the coefficient
determinant (up to a sign) of $\det A^{(\ell^{\prime})}$ twice appearing
for both $M^{(\ell, \ell - 1)} \subset M^{(\ell, \ell)}$ and $M^{(\ell, \ell)}$
in $M^{(\ell+ 1, \ell)}$.  Hence, the coefficient determinant is
$$  \prod_{k =1}^{m-1} \det (A^{(k)})^2  \cdot  \det (A)\, ,  $$
which is nonreduced.
\par
Next, for (i), we let $\Sym_m ^{(j, j)} = \Sym_m \cap M^{(j, j)}$.  By
Lemma \ref{Lem8.12}, these are invariant subspaces.  We claim that the 
partial flag
\begin{equation}
\label{eqn:syminvs}
\Sym_m \supset \Sym_m ^{(m-1,m-1)} \supset\cdots \cdots
\Sym_m ^{(1,1)}\supset 0
\end{equation}
gives a block representation of $B_m$ on $\Sym_m$.  By
Lemma \ref{Lem8.12} 
$$
K_\ell=\left\{\begin{pmatrix} \pm I_{m-\ell} & 0 \\ * & *
\end{pmatrix}
\in B_m\right\}
$$
is in the kernel of the quotient representation
$\rho_{\ell,\ell}:L_m\to\GL(\Sym_m/\Sym_m^{(\ell,\ell)})$; 
and an argument similar to that in the proof of Lemma \ref{Lem8.4} shows 
it is the entire kernel.  
Let $\ell^{\prime}=m-\ell$.  We let $e_{i\, j} = E_{i\, j} + E_{j\, i} \in 
\Sym_m$ and use the complementary bases
$$  \{ e_{1\, \ell^{\prime} + 1},  \dots , e_{\ell^{\prime}+1\, \ell^{\prime} 
+ 1}\} \quad 
\makebox{to $\Sym_m(\C)^{(\ell-1, \ell - 1)}$ in $\Sym_m(\C)^{(\ell, 
\ell)}$, and } $$
$$  \{ E_{\ell^{\prime} + 1\,1}, \dots , E_{\ell^{\prime}+1\, \ell^{\prime}+1 
}\} \quad \makebox{ to $\bk_{\ell-1}$ in $\bk_{\ell}$}\, . $$
By an analogue of \eqref{Eqn8.9}, but applied to \eqref{Eqn8.13}, 
the relative coefficient matrix with respect to these bases at
$A\in\Sym_m(\C)$ is $A^{(\ell^{\prime})}$.
Hence, the coefficient determinant is
\begin{equation}
\label{eqn:prodsym}
\prod_{\ell=1}^m \det(A^{(\ell)})\, .
\end{equation}
It only remains to show that \eqref{eqn:prodsym} is reduced.
We first show by induction on $\ell$
that each $p_\ell(A)=\det(A^{(\ell)})$ is irreducible.
Since $p_1$ is homogeneous of degree $1$, it is irreducible.
Assume by the induction hypothesis that $p_{\ell-1}$ is irreducible.
Expanding the determinant $p_\ell$ along the last column shows that its
derivative in the $E_{\ell,\ell}$ direction is $p_{\ell-1}$.
Since $p_{\ell-1}$ vanishes at \eqref{eqn:symmexample} and $p_{\ell}$
does not, $p_\ell$ is irreducible by Lemma \ref{Lem7.4}(ii).
\begin{equation}
\label{eqn:symmexample}
\begin{pmatrix}
I_{\ell-1} & & & \\
   & 0 & 1 & \\
   & 1 & 0 & \\
   & & & 0 \end{pmatrix}\in\Sym_m(\C) \, .
\end{equation}
Thus, \eqref{eqn:prodsym} is a factorization into irreducible
polynomials, and as each term $p_\ell$ has degree $\ell$, all terms
are relatively prime and hence \eqref{eqn:prodsym} is reduced.
\par
Lastly, consider (iii).  Though $D_m$ has a non-reduced block
representation using invariant subspaces having even-sized zero
blocks, it is easier to use a different group which has a finer
non-reduced block representation and the same open orbit.  Let $G_m$
be defined in the same way as $D_m$ but with $2\times 2$ diagonal
blocks of the form $\begin{pmatrix} 1 & 0 \\ 0 & r \end{pmatrix}$,
$r\neq 0$.  We claim that the partial flag
\begin{equation}
\label{eqn:skewblockspaces}
\Sk_m(\C) \supset \Sk_m(\C)^{(m-1,m-1)}\supset \cdots \supset
\Sk_m(\C)^{(1,1)}\supset 0
\end{equation}
gives a non-reduced block representation of $G_m$.  By Lemma
\ref{Lem8.12}, \eqref{eqn:skewblockspaces} are invariant subspaces
and 
\begin{equation}
K_\ell=\left\{\begin{pmatrix} \pm I_{m-\ell} & 0 \\ * & *
\end{pmatrix}
\in G_m\right\}
\end{equation}
is in the kernel of the quotient representation
$\rho_{\ell,\ell}:G_m\to\GL(\Sk_m(\C)/\Sk_m(\C)^{(\ell,\ell)})$.

\begin{figure}
$$
\begin{matrix} 
   &
     \begin{array}{ccc}   
       \overbrace{
         \hphantom{\begin{matrix} * \;\;\cdots\;\;* \end{matrix}}
       }^{ \ell^{\prime}}
  &
       \overbrace{
         \hphantom{\begin{matrix} 1\quad 0\end{matrix}}
       }^{2}
  &
        \hphantom{\begin{matrix} 1 & 0 \end{matrix}}
     \end{array}
   \\
     \begin{array}{r}    
       \textrm{$\ell^{\prime}$ even} \left\{\vphantom{\begin{matrix} 0 \\ 0 
\end{matrix}}\right. \\
       2 \left\{\vphantom{\begin{array}{c} * \\ * \end{array}}\right. \\
       \vphantom{ \begin{matrix} 0 \\ 0 \end{matrix}}
     \end{array}
     \mspace{-25mu}
   &
     \left(\begin{array}{c|c|c}
       \phantom{\begin{matrix} 0 \\ 0 \end{matrix}} & & \\
 	\hline
       * \;\;\cdots\;\;* & 1\quad 0 & \\
       * \;\;\cdots\;\;* & 0\quad * & \\
 	\hline
         & & \phantom{\begin{matrix} 1 & 0 \\ 0 & 1 \end{matrix}}\\
     \end{array}\right)
\end{matrix}
$$
\caption{The group $G_m$ used in the proof of Theorem
\ref{CholTow}(iii).}
\label{fig:ellprime}
\end{figure}
 We let $\overline{e}_{i\, j} = E_{i\, j} - E_{j\, i} \in \Sk_m(\C)$ for $ 1 
\leq i < j 
\leq m$ and let $\ell^{\prime}=m-\ell$.  We see in Figure 
\ref{fig:ellprime} the form of $G_m$, and obtain the resulting 
complementary bases.  \par
When $\ell^{\prime}$ is even, we use the
complementary bases
$$  \{ \overline{e}_{1\, \ell^{\prime} + 1},   \dots ,
\overline{ e}_{\ell^{\prime}\, \ell^{\prime} +
1}\} \quad \makebox{to
$\Sk_m(\C)^{(\ell-1, \ell - 1)}$ in $\Sk_m(\C)^{(\ell, \ell)}$, and } $$
$$  \{ E_{\ell^{\prime} + 1\,1}, E_{\ell^{\prime} + 1\, 2}, \dots ,
E_{\ell^{\prime}+1\, \ell^{\prime} }\} \quad \makebox{ to
$\bk^{(\ell-1,
\ell-1)}$ in $\bk^{(\ell, \ell)}$}. $$
By an analogue of \eqref{Eqn8.9} for \eqref{Eqn8.13}, we find that at
$A=(a_{i\,j})\in\Sk_m(\C)$, the relative coefficient matrix for these
bases is $A^{(\ell^{\prime})}$.
Its determinant is the square of the Pfaffian $\Pf(A^{(\ell^{\prime})})$. 
\par
When $\ell^{\prime}$ is odd, we use the
complementary bases.
$$  \{ \overline{ e}_{1\, \ell^{\prime} + 1},   \dots ,
\overline{e}_{\ell^{\prime}\, \ell^{\prime} +
1}\} \quad \makebox{to
$\Sk_m(\C)^{(\ell-1, \ell - 1)}$ in $\Sk_m(\C)^{(\ell, \ell)}$, and } $$
$$  \{ E_{\ell^{\prime} + 1\,1}, \dots ,
E_{\ell^{\prime}+1\, \ell^{\prime}-1},
E_{\ell^{\prime}+1\, \ell^{\prime}+1}\} \quad \makebox{ to
$\bk^{(\ell-1,
\ell-1)}$ in $\bk^{(\ell, \ell)}$}. $$
\par

We find that the resulting relative coefficient matrix for these bases
is $A^{(\ell^{\prime}+1)}$ with column $\ell^{\prime}$ and row
$\ell^{\prime}+1$ deleted.
Its determinant factors as the product
of Pfaffians, $\Pf(A^{(\ell^{\prime}+1)})\Pf(A^{(\ell^{\prime}-1)})$
(see \cite{MM}, \S 406-415).
Hence, the coefficient determinant is nonreduced,
with factorization
$$
\left(\prod_{i=1}^{k-1} \Pf(A^{(2i)})^4\right)
\Pf(A^{(2k)})
\qquad\textrm{or}\qquad
\left(\prod_{i=1}^{k-1} \Pf(A^{(2i)})^4\right)
\Pf(A^{(2k)})^3
$$
when $m=2k$ or $m=2k+1$, respectively.  
\par
We now show that $G_m$ and $D_m$ have the same open orbit. 
Let $J$ be the matrix from Theorem \ref{ComplCholFac} (3), an element of 
the open orbit of $G_m$.  Let $K$ be the group of
invertible $m\times m$ diagonal matrices with $2\times 2$ diagonal
blocks in $\SL_2(\C)$ (with a last entry of $1$ if $m$ is odd).
Easy calculations show that $K$ lies in the
isotropy group at $J$, and that for all $A\in G_m$ (resp., all $B\in
D_m$), there exists a $C\in K$ so that $AC\in D_m$ (resp., $BC\in
G_m$); thus $AJA^T=ACJ(AC)^T$ (resp., $BJB^T=BCJ(BC)^T$), and
$G_m$ and $D_m$ have the same open orbit.
\end{proof}

\section{Modified Cholesky-Type Factorizations as Block
Representations}
\label{S:ModCholreps}  
\par
In the previous section we saw that for both general $m \times m$
matrices and skew--symmetric matrices, the corresponding exceptional
orbit varieties are only free* divisors.  In this section we address the
first case by considering a modification of the Cholesky-type
representation for general $m \times m$ matrices.  This further extends
to the space of $(m-1) \times m$ general matrices.  In each case there 
will result a modified form of Cholesky-type factorization.  \par
\vspace{1ex}
\flushpar
{\it General $m \times m$ complex matrices : }
For general $m \times m$ complex matrices we let $C_m$ denote the
subgroup of invertible upper triangular matrices with first diagonal entry
$ = 1$ and other entries in the first row $0$.  $C_m$ is naturally
isomorphic to $B_{m-1}^T$ via
\begin{align}
B_{m-1}^T \,\,  &\longrightarrow \,\, C_m  \\
B  \quad &\mapsto  \begin{pmatrix}
1 & 0 \\ 0  &  B
\end{pmatrix} \,\, . \notag
\end{align}
We consider the action of $B_m \times C_m$ on $V = M_{m, m}$ by $(B,
C)\cdot A = B\, A\, C^{-1}$.  This is the restriction of the linear
transformation representation.  We again have the natural inclusions
$M_{m, m} \hookrightarrow M_{m+1, m+1}$  and $B_m \times C_m
\hookrightarrow B_{m+1} \times C_{m+1}$ where the inclusions (of each
factor) are as in (\ref{Eqn9.2a}).
These inclusions define a tower of representations of $\{ B_m \times
C_m\}$ on $\{ M_{m, m}\}$.
\par
\vspace{1ex} \flushpar
{\it General $(m-1) \times m$ complex matrices : }
We modify the preceding action to obtain a representation of $B_{m-1}
\times C_m$ on $V = M_{m-1, m}$ by $(B, C)\cdot A = B\, A\, C^{-1}$.  We
again have the natural inclusions $M_{m-1, m} \hookrightarrow M_{m,
m+1}$ as in (\ref{Eqn9.2a}).  Together with the natural inclusions $B_{m-
1} \times C_m \hookrightarrow B_{m} \times C_{m+1}$, we again obtain a
tower of representations of $\{ B_{m-1} \times C_m\}$ on $\{ M_{m-1,
m}\}$.
\\

To describe the exceptional orbit varieties, for an $m \times m$ matrix
$A$, we let $\hat A$ denote the $m \times (m-1)$ matrix obtained by
deleting the first column of $A$.  If instead $A$ is an $(m-1) \times m$
matrix, we let $\hat A$ denote the $(m-1) \times (m-1)$ matrix obtained
by deleting the first column of $A$.  In either case, we let $\hat A^{(k)}$
denote the $k \times k$  upper left submatrix of $\hat A$, for $1 \leq k
\leq m-1$.  Then, the towers of modified Cholesky-type representations
given above have the following properties.

\begin{Thm}[Modified Cholesky-Type Representation] \hfill
\label{ModCholRep}
\begin{enumerate}
\item {\em Modified LU decomposition: } The tower of representations $\{
B_m \times C_m\}$ on $\{ M_{m, m}\}$ has a block representation and the
exceptional orbit varieties are free divisors defined by $$\prod_{k =1}^{m}
\det(A^{(k)})\cdot \prod_{k =1}^{m-1}\det(\hat A^{(k)}) \,\, = \,\, 0\, . $$
\item  {\em Modified Cholesky representation for $(m-1) \times m$
matrices: } The tower of representations $\{ B_{m-1} \times C_m\}$ on
$\{ M_{m-1, m}\}$ has a block representation and the exceptional orbit
varieties are free divisors defined by
$$\prod_{k =1}^{m-1} \det(A^{(k)})\cdot \prod_{k =1}^{m-1}\det(\hat
A^{(k)}) \,\, = \,\, 0 \, . $$
\end{enumerate}
\end{Thm}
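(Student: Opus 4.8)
The plan is to verify, for each of the two towers, the hypotheses of Proposition \ref{Prop7.2}, and then to quote Theorem \ref{BlkRepFrDiv} (equivalently Theorem \ref{Towerthm}) to conclude that each exceptional orbit variety is a linear free divisor whose reduced defining equation is the product of the relative coefficient determinants taken over all levels of the tower.  First I would record equidimensionality by the dimension counts $\dim(B_m\times C_m)=\tfrac{m(m+1)}{2}+\tfrac{(m-1)m}{2}=m^2=\dim M_{m,m}$ and $\dim(B_{m-1}\times C_m)=(m-1)m=\dim M_{m-1,m}$; since $B_m\times C_m$ and $B_{m-1}\times C_m$ meet the kernel of $\psi$ only in the trivial group (the condition $C\in C_m$ forces any scalar matrix in $C_m$ to be $I$), the representations are faithful and equidimensional.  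For the top level of the first tower I would take the $G$--invariant subspace $W_m=M^{(1,1)}$, the matrices whose upper left $(m-1)\times(m-1)$ block is zero, which is invariant by Lemma \ref{Lem8.2}; then $V_m/W_m$ is naturally $M_{m-1,m-1}$ as a $B_{m-1}\times C_{m-1}$--representation, which gives condition (iii), and, arguing as in Lemma \ref{Lem8.4}, the group $K_m=\ker(G_m\to\GL(V_m/W_m))^{\circ}$ is the connected subgroup whose Lie algebra is spanned by the \emph{last row} generators $\xi_{(E_{mj},0)}$, $j=1,\dots,m$, together with the \emph{last column} generators $\xi_{(0,E_{jm})}$, $j=2,\dots,m$; a parameter count gives $\dim K_m=2m-1=\dim W_m$.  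For the second tower one uses the analogous $W_m=M^{(1,1)}\subset M_{m-1,m}$, now of dimension $2(m-1)$, with $V_m/W_m\cong M_{m-2,m-1}$.

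The computational core is the relative coefficient determinant $p_m$ of $K_m$ acting on $W_m$.  Using the formulas \eqref{Eqn8.9} for the representation vector fields $\xi_{k,\ell},\zeta_{k,\ell}$ and choosing bases of $W_m$ and of the Lie algebra $\bk_m$ adapted to the splitting of $W_m$ into its last-column and last-row summands, I expect the relative coefficient matrix to be block triangular with two diagonal blocks: one equal, up to sign, to $\hat A^{(m-1)}$ (coming from the column generators, whose sources and targets all lie among columns $2,\dots,m$ because the first row of every $C\in C_m$ is frozen), and one equal, up to sign and a permutation of rows, to the leading $m\times m$ corner of $A^{T}$, whose determinant is $\pm\det A^{(m)}$.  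Thus $p_m=\pm\det(A^{(m)})\cdot\det(\hat A^{(m-1)})$, and the entirely parallel computation in the rectangular case gives $p_m=\pm\det(A^{(m-1)})\cdot\det(\hat A^{(m-1)})$.  This is precisely where the modification pays off: in the unmodified representation $B_m\times N_m$ of Theorem \ref{CholTow}(ii) the column operations reproduce the same minors $\det A^{(k)}$ as the row operations, forcing squares and only a free* divisor, whereas freezing the first row of $C$ makes the column operations reproduce the minors of $\hat A$ instead.  Multiplying the $p_j$ down each tower, and discarding the empty determinants $\det A^{(0)}=\det\hat A^{(0)}=1$, produces exactly the stated products $\prod_{k=1}^{m}\det(A^{(k)})\cdot\prod_{k=1}^{m-1}\det(\hat A^{(k)})$ and $\prod_{k=1}^{m-1}\det(A^{(k)})\cdot\prod_{k=1}^{m-1}\det(\hat A^{(k)})$.

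It remains to check that these relative coefficient determinants are reduced and relatively prime in pairs, the hypothesis that upgrades a nonreduced block representation to a genuine block representation, and for this I would invoke Lemma \ref{Lem7.4}(ii).  By induction on $k$, $\det A^{(k)}$ is irreducible: its partial derivative in the $E_{k,k}$--direction is $\det A^{(k-1)}$, irreducible by the inductive hypothesis, and $\det A^{(k-1)}$ vanishes at the matrix which is the identity on its leading $(k-1)\times(k-1)$ corner with a $2\times2$ antidiagonal swap in positions $k-1,k$ (as in \eqref{eqn:symmexample}), where $\det A^{(k)}$ does not vanish.  The same argument applied to the $(k,k+1)$--entry, whose associated derivative is $\det\hat A^{(k-1)}$, shows $\det\hat A^{(k)}$ is irreducible.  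Finally, no two members of $\{\det A^{(k)}\}_{k}\cup\{\det\hat A^{(k)}\}_{k}$ are constant multiples of one another: those of different homogeneous degree visibly so, and $\det A^{(k)}$ versus $\det\hat A^{(k)}$ because $\det A^{(k)}$ involves the variable $a_{1,1}$ while $\det\hat A^{(k)}$, built from columns $2,\dots,k+1$, does not.  Being irreducible, they are therefore pairwise coprime in the polynomial ring, so Proposition \ref{Prop7.2} gives the block structure (indeed a maximal one) and Theorem \ref{BlkRepFrDiv} gives the free divisor conclusion with the asserted defining equation.  The main obstacle I anticipate is bookkeeping rather than ideas: arranging the adapted bases so that the relative coefficient matrix is manifestly block triangular with the asserted diagonal blocks, and keeping the signs and row permutations straight; once $p_m$ has been pinned down, the irreducibility step is the routine one via Lemma \ref{Lem7.4}.
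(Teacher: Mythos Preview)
Your proposal is correct and follows essentially the same strategy as the paper: verify the levelwise hypotheses of Proposition~\ref{Prop7.2} using $W_m=M^{(1,1)}$, compute the relative coefficient determinant at each level as a product of two corner minors, and invoke Lemma~\ref{Lem7.4}(ii) for irreducibility and coprimality.  The differences are organizational.  The paper makes the block-triangularity of the relative coefficient matrix explicit by inserting the intermediate invariant subspace $M^{(\ell,\ell-1)}$ between $M^{(\ell,\ell)}$ and $M^{(\ell-1,\ell-1)}$, which splits the relative block cleanly into a $B_m$-piece (giving $\det A^{(\ell')}$) and a $C_m$-piece (giving $\det\hat A^{(\ell')}$); you instead argue directly that the $(2m-1)\times(2m-1)$ relative matrix is block triangular with those same diagonal blocks.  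For part~(2), the paper obtains the rectangular case by applying the Quotient Property (Proposition~\ref{prop5.1}) to the square case, quotienting by $M_{m,m}^{(1,0)}$, whereas you run the parallel computation directly on $M_{m-1,m}$; both routes yield the same $p_m$.  For coprimality the paper distinguishes $\det A^{(k)}$ from $\det\hat A^{(k)}$ by evaluating at the test matrix~\eqref{eqn:modluirredexs}a), while your observation that only the former involves $a_{1,1}$ is an equally clean alternative.  One small correction: your parenthetical ``indeed a maximal one'' is not quite right for the one-block-per-level decomposition you describe, since each $p_m$ (for $m\ge 2$) is a product of two irreducibles; maximality would require the finer $M^{(\ell,\ell-1)}$ refinement.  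This does not affect the argument.
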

\par
\begin{proof}
For i), we let $\tau$ denote the restriction of $\rho$ to $G_m = B_m 
\times C_m$.  We will apply Proposition \ref{Prop7.2} using the same 
chain of invariant subspaces $\{ W_j\}$ of $M = M_{m, m}$ formed from 
$M^{(\ell, \ell)}$ and the refinements obtained by introducing the 
intermediate subspaces $M^{(\ell, \ell-1)}$ used in the proof of ii) in 
Theorem \ref{CholTow}.  We let $K^{(\ell, \ell-1)}$ denote the 
corresponding kernels for $G_m = B_m \times C_m$ acting on $M_{m, m}$.  
Because the group $B_m$ is unchanged the computation for the 
representation of $K^{(\ell, \ell-1)}/K_{\ell - 1}$  on $M^{(\ell, \ell - 
1)}/M^{(\ell - 1, \ell -1)}$ is the same as in ii) of Theorem \ref{CholTow}.  
\par
We next have to replace the calculation for $N_m$ by that for $C_m$ for
the representation of $K_{\ell}/K^{(\ell, \ell-1)}$ on $M^{(\ell,
\ell)}/M^{(\ell, \ell - 1)}$.  We note that this changes exactly one vector in
the basis, replacing $E_{\ell^{\prime} + 1\, 1}$ by  $E_{\ell^{\prime} + 1\,
\ell^{\prime} + 1}$.  When we compute the associated representation
vector field, we obtain the column vector formed from the first
$\ell^{\prime}$ entries of the $\ell^{\prime} + 1$ column of $A$.  Hence,
we remove the first column and replace it by the  $\ell^{\prime} + 1$-st
column.  This is exactly the matrix  $-(\hat A)^{(\ell^{\prime})}$.
Hence the coefficient determinant is (up to a sign)
\begin{equation}
\label{eqn:modluprod}
\prod_{j = 1}^{m} \det (A^{(j)})  \cdot \prod_{k = 1}^{m-1} \det (\hat
A^{(k)})\, . 
\end{equation} 
\par
\vspace{1ex}
\par
We now show \eqref{eqn:modluprod} is reduced.  We proceed by induction
on the size of the determinant.  The functions
$A\mapsto \det(A^{(1)})$ and
$A\mapsto \det({\hat A}^{(1)})$ are irreducible
since they are homogeneous of degree $1$.
Suppose $A\mapsto \det(A^{(k)})$ (respectively, 
$A\mapsto \det({\hat A}^{(k)})$) are irreducible for $k < j$. These 
determinants are related by differentiation:  
$$\pd{\det(A^{(j)}) }{a_{j\, j}} \, = \, \det(A^{(j-1)})  \qquad \makebox{ and 
}  \qquad  \pd{ \det({\hat A}^{(j)}) }{a_{j\, j+1}} \, = \,  \det({\hat A}^{(j-
1)})  \, .   $$
Thus, we may apply
Lemma \ref{Lem7.4}(ii), using the induction hypothesis, to 
\eqref{eqn:modluirredexs}a)
(respectively, \eqref{eqn:modluirredexs}b) )
and deduce that the $j \times j$ determinants are irreducible.
\begin{equation}
\label{eqn:modluirredexs}
a)
\begin{pmatrix}
I_{j-2} & & & \\
 & 0 & 1 & \\
 & 1 & 0 & \\
 & & & 0 \end{pmatrix}
\qquad
b)
\begin{pmatrix}
0_{(j-2) \times 1} & I_{j-2} & & & \\
& & 0 & 1 & \\
& & 1 & 0 & \\
& & & & 0 \end{pmatrix}
\end{equation}
Thus, each factor of \eqref{eqn:modluprod} is irreducible.
Based on the (polynomial) degrees of $A\mapsto \det(A^{(j)})$ and
$A\mapsto \det({\hat A}^{(j)})$
 and their values at \eqref{eqn:modluirredexs}a), we conclude the factors 
are irreducible and distinct; hence, \eqref{eqn:modluprod} is reduced.
\par
Hence, the modified Cholesky-type representation on $m \times m$
complex matrices is a block representation.  Furthermore, the induced
quotient representation of $G_m = B_m \times C_m$  on $M_{m, m}/M^{(1,
1)}$ has kernel $K_1$ and it is easy to check that $G_m/K_1 \simeq
G_{m-1}$.  Hence, the $(G_m, M_{m, m})$ form a tower of block
representations.
\par
We obtain ii) of the theorem by modifying the proof of i).  Now $G_m = 
B_{m-1} \times C_m$ is acting on $M = M_{m-1, m}$, and 
when using the intermediate subspaces $M^{(\ell,\ell-1)}$, the
last nontrivial group and subspace in the block structure for
$(G_m,M_{m,m})$ is $K^{(m-1,m)} \subset G_m = B_{m-1} \times C_m$ and 
$M_{m,m}^{(1,0)}$, whose relative coefficient determinant is the 
determinant function.  \par
By Proposition \ref{prop5.1}, the representation of $G_m/ K^{(1, 0)}$ on 
the quotient $M_{m,m}/M_{m,m}^{(1,0)}$ gives a block representation 
isomorphic to the one described.  In turn, the block representation for 
$M_{m-1,m}$ has $M_{m-1,m}^{(1,1)}$ as an invariant subspace with 
$K^{(1,1)}$ the kernel of the induced quotient representation.  Forming the 
quotient $M_{m-1,m}/M_{m-1,m}^{(1,1)}$ gives a block representation of 
$G_m/K^{(1,1)}$ isomorphic to the one on $M_{m-2,m-1}$.
Hence, we obtain a tower of block representations.
\end{proof}
We have the following consequences for modified forms of Cholesky-type
factorizations which follow from Theorem \ref{ModCholRep}.

\begin{Thm}[Modified Cholesky-Type Factorization] \hfill
\label{ModCholFac}
\begin{enumerate}
\item {\em Modified LU decomposition: } If $A$ is a general complex $m
\times m$ matrix with $\det (A^{(k)}) \neq 0$  for $k = 1, \dots, m$ and
$\det (\hat A^{(k)}) \neq 0$  for $k = 1, \dots, m-1$, then there exists a
unique lower triangular matrix $B$ and a unique  upper triangular matrix
$C$, which has first diagonal entry $= 1$, and remaining first row entries
$= 0$ so that $A = B \cdot K \cdot C$, where $K$ has the form of a) in
(\ref{Eqn9.4}).
\item  {\em Modified Cholesky factorization for $(m-1) \times m$
matrices: } If $A$ is an $(m-1) \times m$ complex matrix with $\det
(A^{(k)}) \neq 0$  for $k = 1, \dots, m - 1$, $\det (\hat A^{(k)}) \neq 0$  for
$k = 1, \dots, m-1$, then there exists a unique $(m-1) \times (m-1)$
lower triangular matrix $B$ and a unique $m \times m$ matrix $C$ having
the same form as in (1), so that $A = B \cdot K^{\prime} \cdot C$, where
$K^{\prime}$ has the form of b) in (\ref{Eqn9.4}).
\end{enumerate}
\begin{equation}
\label{Eqn9.4}
a) \qquad \begin{pmatrix}
1 &1 & 0 & 0 & 0 \\ 0  & 1 & 1 & 0 & 0 \\  0  & 0 & \ddots & 1 & 0 \\
0  & 0 & 0 & 1 & 1 \\  0  & 0  & 0  & 0 & 1
\end{pmatrix}  \qquad \makebox{ and } \quad b) \qquad    \begin{pmatrix}
1 &1 & 0 & 0 & 0 & 0 \\ 0  & 1 & 1 & 0 & 0 & 0 \\  0  & 0 & \ddots & 1 & 0
& 0 \\
0  & 0 & 0 & 1 & 1 & 0 \\  0  & 0  & 0 & 0  & 1 & 1
\end{pmatrix}
\end{equation}
\end{Thm}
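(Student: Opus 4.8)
The plan is to deduce Theorem \ref{ModCholFac} from Theorem \ref{ModCholRep} exactly as Theorem \ref{ComplCholFac} follows from Theorem \ref{CholTow}: the factorization statement is a concrete reformulation of ``the representation has an open orbit,'' together with a determination of the isotropy subgroup at the basic matrix.

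First I would record that both representations in Theorem \ref{ModCholRep} are equidimensional by a dimension count, $\dim B_m + \dim C_m = m(m+1)/2 + m(m-1)/2 = m^2 = \dim M_{m, m}$ and $\dim B_{m-1} + \dim C_m = (m-1)m = \dim M_{m-1, m}$, so by Theorem \ref{ModCholRep} (via Theorem \ref{BlkRepFrDiv}) each has an open orbit $\cU$ whose complement $\cE$ is the free divisor cut out by the displayed product of determinants. Hence a matrix $A$ lies in $\cU$ if and only if all the relevant $\det(A^{(k)})$ and $\det(\hat A^{(k)})$ are nonzero. Next I would identify $\cU$ as the orbit of the basic matrix $K$ (resp.\ $K^{\prime}$) from \eqref{Eqn9.4}: since $K$ is upper triangular with $1$'s on its diagonal, each $K^{(k)}$ is again upper triangular with $1$'s on the diagonal, so $\det(K^{(k)}) = 1$, and deleting the first column of $K$ yields a matrix whose upper-left $k \times k$ block is lower triangular with $1$'s on the diagonal, so $\det(\hat K^{(k)}) = 1$ as well (and similarly for $K^{\prime}$). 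Thus $K$ (resp.\ $K^{\prime}$) lies in $\cU$, and therefore every $A$ satisfying the stated nonvanishing conditions can be written $A = B\,K\,(C^{\prime})^{-1}$ with $B \in B_m$ and $C^{\prime} \in C_m$ (resp.\ $B \in B_{m-1}$, $K^{\prime}$ in place of $K$). Setting $C = (C^{\prime})^{-1}$ and noting that $C_m$ is a group, so that $C$ is again upper triangular with first diagonal entry $1$ and remaining first-row entries $0$, produces a factorization $A = B \cdot K \cdot C$ (resp.\ $A = B \cdot K^{\prime} \cdot C$) of the asserted shape.

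For uniqueness, if $A = B_1 K C_1 = B_2 K C_2$ with $B_i \in B_m$, $C_i \in C_m$, then $(B_2^{-1}B_1,\, C_2 C_1^{-1})$ lies in the isotropy subgroup $H \subset B_m \times C_m$ of $K$ under the action $(B, C)\cdot A = B A C^{-1}$, so it suffices to prove $H = \{(I, I)\}$ (and the analogue for $K^{\prime}$). Since the action is equidimensional with an open orbit, $H$ is already finite, and triviality follows from a direct computation: writing $K = I + N$ with $N$ the superdiagonal matrix, the relation $B K C^{-1} = K$ becomes $B + BN = C + NC$, and comparing entries row by row — the first row forcing $C$ to have row $(1, 0, \dots, 0)$, then propagating down the diagonal and across the off-diagonal entries using lower-triangularity of $B$ and upper-triangularity of $C$ — forces $B = C = I$ (with the analogous bookkeeping for the rectangular case $B \in B_{m-1}$, $C \in C_m$, $K^{\prime}$). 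This last isotropy computation is the only non-formal point, and is the step I expect to require the most care, although it is entirely elementary.

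Finally, I would note the nonuniqueness phrasing in the statement is sharp: the factorization is unique on the nose because $H$ is trivial, in contrast with the classical cases in Theorem \ref{ComplCholFac} where the isotropy group is a nontrivial finite group; so once $H = \{(I,I)\}$ is established there is nothing further to check.
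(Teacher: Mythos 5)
Your proposal is correct and follows essentially the same route as the paper: the paper's proof consists of the single sentence that the result ``follows from Theorem \ref{ModCholRep} by directly checking that the matrices a), respectively b), in (\ref{Eqn9.4}) are not in the exceptional orbit varieties,'' which is exactly your existence argument, and the uniqueness claim is handled (as the paper's Remark after Theorem \ref{CholTow} indicates) by computing the isotropy subgroup at the basic matrix, which you do explicitly. The only blemish is your closing remark contrasting this with Theorem \ref{ComplCholFac}: in case (2) of that theorem (complex LU) the isotropy is also trivial, so the contrast really applies only to cases (1) and (3).
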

\par
The factorization theorem follows from Theorem \ref{ModCholRep} by
directly checking that the matrices a), respectively b), in (\ref{Eqn9.4}) 
are
not in the exceptional orbit varieties.
\par
We summarize in Table \ref{table2.0}, each type of complex (modified)
Cholesky-type representation, the space of complex matrices, the
solvable group and the representation type. 

\begin{table}
\begin{tabular}{lccc}
Cholesky-type  & Matrix Space & Solvable Group & Representation \\
\quad Factorization  &     &      &     \\
\hline
Symmetric matrices & $\Sym_m$ & $B_m$  &  Bil\\
General  matrices  &$M_{m, m}$ & $B_m \times N_m$  &  LT\\
Skew-symmetric &  $\Sk_m$  & $D_m$  &  Bil  \\
\hline \hline
Modified Cholesky  &     &    &     \\
\quad -type Factorization  &     &      &     \\
\hline
General $m \times m$ & $M_{m, m}$  &  $B_m \times C_m$   &   LT   \\
General $(m-1) \times m$  & $M_{m-1, m}$ & $B_{m-1} \times C_m$   &
LT     \\
\end{tabular}
\vspace*{.2cm}
\caption{\label{table2.0} Solvable groups and (nonreduced) Block
representations for (modified) Cholesky-type Factorization arising from
either the linear transformation representation (LT) or bilinear
representation (Bil).}
\end{table}

\section{Block Representations for Nonlinear Solvable Lie Algebras}
\label{S:NonlinLie}
In the preceding section we saw that the Cholesky-type representations 
for the spaces of general $m \times m$ and $m \times (m+1)$ matrices 
were nonreduced block representations, yielding free* divisors.  However, 
by modifying the solvable groups and representations we obtained block 
representations, whose exceptional orbit varieties are free divisors and 
contain the determinantal varieties.  In this section, we take a different 
approach to modifying the Cholesky representation on $\Sk_{m}(\C)$ to 
obtain a representation whose exceptional orbit variety is a free divisor 
containing the Pfaffian variety.  The underlying reason for this change is 
that factorization properties of determinants of submatrices of skew-
symmetric matrices suggests that a reduced exceptional orbit variety may 
not be possible for a solvable linear algebraic group.  However, the 
essential ideas of the block representation will continue to be valid if we 
replace the finite dimensional solvable Lie algebra by an infinite 
dimensional solvable holomorphic Lie algebra which has the analog of a 
block representation.  \par 
 We will then obtain the exceptional orbit varieties which are 
\lq\lq nonlinear\rq\rq free divisors.  The resulting sequence of free 
divisors on $\Sk_m(\C)$ (for all $m$) have the tower-like property that 
they are formed by repeated additions of generalized determinantal and 
Pfaffian varieties (c.f., Theorem \ref{Towerthm}(iii)).
We shall present the main ideas here, but we will refer to \S 5.2 of 
\cite{P} for certain technical details of the computations.   \par

We first consider the bilinear form representation on $\Sk_m(\C)$ of
the group 
\begin{equation}
\quad  G_m =  \left\{
\begin{pmatrix}
T_{2} & 0_{2, m-2} \\ 0_{m-2, 2}  &  B_{m-2}
\end{pmatrix} \right\},
\end{equation}
where $T_2$ is the group of $2\times 2$ invertible diagonal matrices.
Let $\g_m$ be the Lie algebra of $G_m$.
When $m=3$, the exceptional orbit variety of this representation is
the normal crossings linear free divisor on $\Sk_3(\C)$.
For $m>3$, this representation cannot have an open orbit, as
$\dim(\Sk_m(\C))-\dim(G_m)=m-3$.  Nonetheless, this is a 
representation of the finite dimensional solvable Lie algebra $\g_m$ on 
$\Sk_m(\C)$.  The associated representation vector fields generate a 
solvable holomorphic Lie algebra $\cL_m^{(0)}$.  Our goal is to construct 
an extension of $\cL_m^{(0)}$ by adjoining as generators $m-3$ nonlinear 
Pfaffian vector fields to obtain a solvable holomorphic Lie algebra 
$\cL_m$ which is a 
free $\cO_{s_m}$ module of rank $s_m = \dim_{\C} \Sk_m(\C)  = 
\tbinom{m}{2}$, where we abbreviate $\cO_{\Sk_m(\C), 0}$ as 
$\cO_{s_m}$.  Then we will apply Saito\rq s criterion to deduce that the 
resulting \lq\lq exceptional orbit variety\rq\rq is a free divisor. 
\par
For $S\subseteq \{1,\ldots,m\}$ and $A\in\Sk_m(\C)$, we define 
$\Pf_S(A)$ to be the Pfaffian of the matrix obtained by deleting all rows 
and columns of $A$ not indexed by $S$.
For any $i\in\{2,\ldots,m\}$, let $\epsilon(i)$ be either $1$ or
$2$, so that $\epsilon(i)$ and $i$ have opposite parity, and hence  
$\{\epsilon(i),\epsilon(i)+1,\ldots,i\}$ has even cardinality.
As in \S \ref{S:CholFact}, we let $\overline{e}_{i,j} = E_{i,j} -E_{j,i} \in 
\Sk_m(\C)$ for $1 \leq i < j \leq m$.  Then for $2\leq k\leq m-2$, define
\begin{equation}
\eta_k(A)=\sum_{k<p<q\leq m} \Pf_{\{\epsilon(k),\ldots,k,p,q\}}(A)
 \cdot \overline{e}_{p,q},
\end{equation}
which is a (homogeneous) vector field on $\Sk_m(\C)$ of degree $\lfloor 
\frac{k}{2}\rfloor$.  Here $\overline{e}_{p,q}$, viewed as a constant 
vector 
field, denotes $\pd{ }{a_{p, q}} - \pd{ }{a_{q, p}}$ and hence has degree $-
1$. \par
For example, if $m = 2\ell$, the degrees of the $\eta_k$ form a sequence 
$1, 1, 2, 2, \dots$, ending with a single top degree $\ell - 1$; while for $m 
= 2\ell +1$, the sequence consists of successive pairs of integers. For $m$ 
even, the top vector field is just $\Pf(A) \,\overline{e}_{m-1, m}$.  \par 
Then, $\cL_m$ will be the $\cO_{s_m}$-module generated by a basis $\{ 
\xi_{E_{i, j}}\}$ of representation vector fields associated to $G_m$ and 
$\{ \eta_k$, $2\leq k\leq m-2\}$.  Note this module has $s_m$ generators 
so Saito\rq s  criterion may be applied.  We let, as earlier, $\hat A$ 
denote the matrix $A$ with the left column removed, and let $\hat{\hat 
A}$ be the matrix $A$ with the two left columns deleted.  \par
Then, the modification of the Cholesky-type representation for the 
$\Sk_m(\C)$ is given by the following result.
\begin{Thm}
\label{Thm.nonlinLiealg}
The $\cO_{s_m}$ module $\cL_m$ is a solvable holomorphic Lie algebra 
for $m \geq 3$.  In addition, it is a free $\cO_{s_m}$ module of rank 
$s_m$, and it defines a free divisor on $\Sk_m(\C)$ given by the equation 
\begin{equation}
\label{eqn:skewnonlinFD0}
\prod_{k=1}^{m-2} \det\left({\hat{\hat A}}^{(k)}\right) \cdot
\prod_{k=2}^m \Pf_{\{\epsilon(k),\ldots,k\}}(A) \, = \, 0  \,\, .
\end{equation}
\end{Thm}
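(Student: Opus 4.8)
The plan is to treat $\cL_m$ exactly as a block representation is treated in Theorem~\ref{BlkRepFrDiv}, except that the finite-dimensional Lie algebra $\g$ of an algebraic group is replaced by the infinite-dimensional holomorphic Lie algebra $\cL_m$. Its $s_m$ generators consist of the $\dim G_m = \binom{m}{2}-(m-3)$ linear representation vector fields $\xi_{E_{i,j}}$ for $G_m$ together with the $m-3$ Pfaffian fields $\eta_k$, $2\le k\le m-2$, so that the second form of Saito's criterion (Theorem~\ref{Saitocrit}) becomes applicable once the coefficient determinant is understood. Concretely I would: (a) compute the coefficient matrix of these $s_m$ generators with respect to a basis of $\Sk_m(\C)$ adapted to a flag of $G_m$-invariant subspaces and show its determinant equals, up to a nonzero constant, the product in \eqref{eqn:skewnonlinFD0}; (b) show that product is reduced; (c) show the generators are closed under Lie bracket, so that $\cL_m$ is a holomorphic Lie algebra; and (d) check solvability. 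The base case $m=3$ is the normal crossings divisor $a_{1,2}a_{1,3}a_{2,3}$ already noted in the text, so one may induct on $m$.

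For (a), I would use the flag of invariant subspaces $\Sk_m(\C)^{(j,j)}$ of Lemma~\ref{Lem8.12}, refined as in the proof of Theorem~\ref{CholTow}(iii), adapted to the block form of $G_m$ (the torus $T_2$ scaling the first two coordinates, the Borel $B_{m-2}$ acting on the rest). Using $\xi_{E_{k,\ell}}(A)=E_{k,\ell}A+AE_{\ell,k}$ from \eqref{Eqn8.13}, the linear fields alone fill the coefficient matrix in block-lower-triangular fashion, but—because of the dimension deficit $m-3$—the diagonal blocks produced are not square: each is a submatrix of $A$ (or, for the skew blocks, a matrix whose determinant is a square of a Pfaffian) missing one column. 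The content of the computation is that the field $\eta_k$ contributes, below the block diagonal of the linear part, precisely the one missing column at level $k$, because $\eta_k(A)=\sum_{k<p<q\le m}\Pf_{\{\epsilon(k),\ldots,k,p,q\}}(A)\,\bar e_{p,q}$ is supported on the $\bar e_{p,q}$ with $p,q>k$. Expanding these Pfaffians along the added row identifies the completed $k$th diagonal block as one with determinant $\det(\hat{\hat A}^{(k)})$ or $\Pf_{\{\epsilon(k),\ldots,k\}}(A)$ (times a lower-order Pfaffian from the partner block), so the product of all diagonal blocks is \eqref{eqn:skewnonlinFD0} up to a constant; a degree count ($\deg$ of each $\eta_k$ column is $\lfloor k/2\rfloor+1$, matching the total degree of \eqref{eqn:skewnonlinFD0}) confirms the bookkeeping. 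Many of these index computations are the details deferred to \S 5.2 of \cite{P}.

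For (b), each factor of \eqref{eqn:skewnonlinFD0} is irreducible by Lemma~\ref{Lem7.4}(ii), using the differentiation identities $\partial\det(\hat{\hat A}^{(k)})/\partial a_{k,k+2}=\det(\hat{\hat A}^{(k-1)})$ and $\partial\Pf_{\{\epsilon(k),\ldots,k\}}(A)/\partial a_{k-1,k}=\pm\Pf_{\{\epsilon(k-2),\ldots,k-2\}}(A)$ (both higher factors being linear in the indicated variable): irreducibility of the lower-order factors propagates once one exhibits, for each, a skew matrix at which the lower factor vanishes but the higher one does not—exactly the kind of explicit matrices used in the proofs of Theorems~\ref{CholTow} and \ref{ModCholRep} in \eqref{eqn:symmexample} and \eqref{eqn:modluirredexs}. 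Since the (weighted) degrees of the distinct factors are forced apart, no two can agree up to a unit, so \eqref{eqn:skewnonlinFD0} is reduced.

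The part I expect to be the real obstacle is (c)–(d): showing $[\xi_{E_{i,j}},\eta_k]$ and $[\eta_j,\eta_k]$ lie in the $\cO_{s_m}$-module $\cL_m$, which rests on Pfaffian expansion identities for the $\Pf_S(A)$, and identifying a decreasing chain of ideals—built from the $\eta_k$ (all of positive degree) and from $[\cL_m^{(0)},\cL_m^{(0)}]$—whose derived series terminates, yielding solvability (recall $\cL_m^{(0)}$ is already solvable as the image of the solvable $\g_m$). These are again the technical computations of \S 5.2 of \cite{P}. Once closure under bracket and the reducedness of \eqref{eqn:skewnonlinFD0} are in place, Theorem~\ref{Saitocrit}(2) immediately gives that $\cL_m$ is a free $\cO_{s_m}$-module of rank $s_m$ and that the hypersurface \eqref{eqn:skewnonlinFD0} is a free divisor, which is the assertion of the theorem.
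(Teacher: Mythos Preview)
Your proposal is correct and follows essentially the same route as the paper: Saito's second criterion applied to the $s_m$ generators, with the coefficient matrix shown to be block lower triangular relative to the flag $\Sk_m(\C)^{(\ell,\ell)}$ (each $\eta_{\ell'-1}$ supplying the single missing column in the $\ell$th block, whose determinant then factors as $\det(\hat{\hat A}^{(\ell'-1)})$ times a Pfaffian), reducedness via Lemma~\ref{Lem7.4}, and closure under bracket via Pfaffian identities deferred to \cite{P}. The only packaging differences are that the paper records the bracket computations explicitly as Proposition~\ref{prop:nonlinvfbracket} (resting on the Dress--Wenzel identity, Theorem~\ref{thm:dw}) and isolates your solvability argument as the general Lemma~\ref{solLiealg}: a holomorphic Lie algebra generated by homogeneous vector fields of nonnegative degree is solvable iff its degree-$0$ part is.
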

\begin{Remark}
\label{Rem.11.1}
We note in (\ref{eqn:skewnonlinFD0}), that when $k$ is odd, $\epsilon(k) = 
2$, so that $\Pf_{\{\epsilon(k),\ldots,k\}}(A)$ is the Pfaffian of the 
$(k-1) \times (k-1)$ upper left-hand submatrix of the matrix obtained 
from $A$ by first deleting the top row and first column.
\end{Remark}
\par
Before proving this theorem, we illustrate it in the simplest nontrivial 
case of $\Sk_4(\C)$.
\begin{Example}
\label{Ex.nonlinalg}
First,  $\dim_{\C} \Sk_4(\C) = 6$; while $\dim G_4 = 5$, with Lie algebra 
$\bg_4$ having basis $\{ E_{1, 1}, E_{2, 2}, E_{3, 3}, E_{4,3}, E_{4,4}\}$. 
For $\cL_4$, we adjoin to the representation vector fields associated to 
the basis for $\bg_4$ an additional generator $\eta_2 = \Pf(A)\cdot
\overline{e}_{3, 4}
\, (= \Pf(A) \cdot (\pd{ }{a_{3, 4}} -\pd{ }{a_{4, 3}}))$.  Then the 
coefficient matrix using the basis $\{\overline{e}_{1, 2},
\overline{e}_{1, 3},
\overline{e}_{2, 3}, \overline{e}_{1, 4}, \overline{e}_{2, 4}, 
\overline{e}_{3, 4}\}$ is \par  
$$
\begin{pmatrix}
a_{12} & a_{12} & 0 & 0 & 0 & 0\\
a_{13} & 0 & a_{13} & 0 & 0 & 0\\
0 & a_{23} & a_{23} & 0 & 0 & 0\\
a_{14} & 0 & 0 & a_{13} & a_{14} & 0 \\
0 & a_{24} & 0 & a_{23} & a_{24} & 0\\
0 & 0 & a_{34} & 0 & a_{34} &\Pf (A)
\end{pmatrix}
$$

\flushpar
which has block lower triangular form, with determinant 
$$a_{1\, 2} a_{1\, 3} a_{2\, 3} (a_{1\, 3} a_{2\, 4} - a_{1\, 4}a_{2\, 
3})\cdot \Pf(A) \, .$$  
The term $a_{2\, 3}$ is the Pfaffian $\Pf_{\{2, 3\}}(A)$ as described in 
Remark \ref{Rem.11.1}.  The determinant has degree $7$ and, by the 
theorem, defines a free divisor, which is not a linear free divisor.  
\end{Example}
\begin{proof}[Proof of Theorem \ref{Thm.nonlinLiealg}] 
To prove the theorem we will apply Saito\rq s Criterion (Theorem 
\ref{Saitocrit}(2)).  For it, we first 
show that $\cL_m$ is a holomorphic Lie algebra.
Since $\g_m$ is a Lie algebra, it is sufficient to show that both 
$[\xi,\eta_k]$ and $[\eta_k,\eta_l]\in\cL_m$ for all $2 \leq  \ell, k \leq 
m-2$ and any representation vector field $\xi$ associated to $G_m$.
\begin{Proposition}
\label{prop:nonlinvfbracket}
If $E_{p,q}\in\g_m$, then
$$[\xi_{E_{p,q}},\eta_k]=\left\{
\begin{array}{ll}
\eta_k & \textrm{if $p=q$ and $\epsilon(k)\leq p\leq k$}
\\
0 & \textrm{otherwise}
\end{array}\right..$$
If $k<l$, then
$$[\eta_k,\eta_l]=
\frac{1}{2}(\delta_{\epsilon(k),\epsilon(l)}+l-k-1)
\Pf_{\{\epsilon(k),\ldots,k\}}\cdot \eta_l.$$
\end{Proposition}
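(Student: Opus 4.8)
The plan is to verify the two bracket formulas by direct computation, exploiting the fact that all the vector fields in sight have simple, explicit coefficients. For the first bracket, I would use the formula from Lemma~\ref{Lem4.1} and the description of the linear representation vector fields in \eqref{Eqn8.13}: since $E_{p,q} \in \g_m$, the vector field $\xi_{E_{p,q}}$ acts on $A \in \Sk_m(\C)$ by $A \mapsto E_{p,q}\,A + A\,E_{q,p}$. The key observation is that $\eta_k = \sum_{k<p<q\leq m} \Pf_{\{\epsilon(k),\ldots,k,p,q\}}(A)\,\bar e_{p,q}$ is, up to the constant vector fields $\bar e_{p,q}$, a sum of the coordinate functions $\Pf_S(A)$ with $S = \{\epsilon(k),\ldots,k,p,q\}$. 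Computing $[\xi_{E_{p,q}},\eta_k]$ then amounts to two pieces: the action of the linear vector field $\xi_{E_{p,q}}$ (a derivation) on the Pfaffian coefficient functions $\Pf_S(A)$, and the Lie bracket of $\xi_{E_{p,q}}$ with the constant vector fields $\bar e_{p',q'}$. The second piece contributes the ``lower-order'' terms and the first piece contributes the weight term; since $\Pf_S$ is (up to sign) unchanged by row/column operations that respect the flag, the net effect is that $\xi_{E_{p,q}}$ scales $\eta_k$ by $1$ precisely when $p=q$ and the index $p$ lies in the window $\{\epsilon(k),\ldots,k\}$ that appears in every term of $\eta_k$, and annihilates it otherwise. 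The diagonal-versus-off-diagonal dichotomy is exactly the statement that $\g_m$ restricted to the relevant indices acts by rescaling rows/columns of $A$.

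For the second bracket $[\eta_k,\eta_l]$ with $k<l$, I would again write both fields out in coordinates and compute. Since $\eta_k$ and $\eta_l$ both have constant-coefficient ``direction'' parts $\bar e_{p,q}$, the bracket $[\eta_k,\eta_l]$ only involves the derivatives of the Pfaffian coefficients of one field in the directions of the other; the $\bar e_{p,q}$ being constant vector fields means their mutual brackets and self-brackets vanish. So I need to compute $\eta_k(\Pf_{\{\epsilon(l),\ldots,l,p,q\}})$ and $\eta_l(\Pf_{\{\epsilon(k),\ldots,k,p,q\}})$. For $k<l$ the window $\{\epsilon(l),\ldots,l\}$ contains $\{\epsilon(k),\ldots,k\}$ (after adjusting for parity via $\epsilon$), and the differentiation of a Pfaffian $\Pf_S$ in the direction $\bar e_{p,q}$ picks out (by the standard expansion of a Pfaffian along a pair of rows/columns, cf.\ \cite{MM}) a smaller Pfaffian. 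The combinatorics of which index pairs $(p,q)$ appear in both sums, together with the Pfaffian expansion identity, collapses the double sum to the single term $\Pf_{\{\epsilon(k),\ldots,k\}}\cdot\eta_l$, and the rational coefficient $\frac12(\delta_{\epsilon(k),\epsilon(l)} + l - k - 1)$ is a count of the surviving terms (the Kronecker delta records whether the two windows share their starting index, which affects the count by one). The term $\eta_l(\Pf_{\{\epsilon(k),\ldots,k,p,q\}})$ either vanishes or produces terms proportional to $\eta_k$-type data that is absorbed, since $l>k$ means the larger Pfaffians needed do not appear.

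The main obstacle will be the bookkeeping in the $[\eta_k,\eta_l]$ computation: keeping track of signs in the Pfaffian expansions (Pfaffians are only defined up to the ordering convention on the index set $S$, and $\bar e_{p,q} = E_{p,q}-E_{q,p}$ is antisymmetric, so sign conventions must be fixed once and propagated carefully), and verifying that the cross-terms indexed by pairs $(p,q)$ that lie in one window but not the other genuinely cancel or reorganize into a multiple of $\eta_l$ rather than leaving a residual term outside $\cL_m$. I would handle this by fixing the convention that $\Pf_S$ always uses the increasing order on $S$, then using the recursive Pfaffian expansion $\Pf_S = \sum_{j} (-1)^{\text{something}} a_{i_1 i_j}\Pf_{S\setminus\{i_1,i_j\}}$ to reduce everything to Pfaffians of the fixed windows $\{\epsilon(k),\ldots,k\}$. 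The degree bookkeeping is a useful consistency check: $\deg\eta_k = \lfloor k/2\rfloor$ and $\deg\eta_l = \lfloor l/2\rfloor$, while $\bar e_{p,q}$ has degree $-1$, so $[\eta_k,\eta_l]$ has degree $\lfloor k/2\rfloor + \lfloor l/2\rfloor - 1$, which matches $\deg\Pf_{\{\epsilon(k),\ldots,k\}} + \deg\eta_l = (k - \epsilon(k)+1)/2 \cdot \text{(even cardinality)} \ldots$ — more precisely $\lfloor k/2\rfloor + \lfloor l/2\rfloor - 1$ once one notes $\Pf_{\{\epsilon(k),\ldots,k\}}$ has degree $\lfloor k/2 \rfloor$; this confirms the shape of the answer before one commits to the constant. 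Details of this last computation are in \S 5.2 of \cite{P}, which I would cite for the remaining verifications.
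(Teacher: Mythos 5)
The paper's own ``proof'' is itself a deferral to Appendix~A of \cite{P}, with one substantive remark: the computation ``makes repeated applications'' of the Dress--Wenzel Pfaffian identity (Theorem~\ref{thm:dw}), a Pl\"ucker-type quadratic relation among Pfaffians. Your proposal also defers the heavy bookkeeping to \cite{P}, but the strategic description you give of what makes the computation go through is incomplete, and this is a real gap rather than a stylistic difference. You correctly identify that since each $\bar e_{p,q}$ is a constant vector field, $[\eta_k,\eta_l]$ reduces to applying one field as a derivation to the Pfaffian coefficients of the other, and that the derivative $\partial \Pf_S/\partial a_{p,q}$ is (up to sign) a smaller Pfaffian by the cofactor expansion. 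But after that step you are left with a double sum of \emph{products} of Pfaffians of the form $\Pf_{\{\epsilon(k),\ldots,k,p,q\}}\cdot\Pf_{\{\epsilon(l),\ldots,l,p',q'\}\setminus\{p,q\}}$, and the assertion that ``the combinatorics \ldots collapses the double sum to the single term $\Pf_{\{\epsilon(k),\ldots,k\}}\cdot\eta_l$'' is exactly the content of a quadratic Pfaffian relation, not something the recursive cofactor expansion delivers on its own. The identity that produces the collapse is Dress--Wenzel:
\[
\sum_{\tau=1}^\ell (-1)^\tau\,\Pf_{I_1\Delta\{i_\tau\}}\,\Pf_{I_2\Delta\{i_\tau\}}=0,
\]
applied with suitable choices of $I_1, I_2$ built from the windows $\{\epsilon(k),\ldots,k\}$, $\{\epsilon(l),\ldots,l\}$ and the extra indices $p,q$. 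Without invoking a relation of this type, the cross-terms you describe (pairs in one window but not the other) do not visibly cancel or reorganize into a multiple of $\eta_l$; they sit there as an unresolved bilinear expression in Pfaffians. Your degree count is a useful sanity check on the \emph{shape} of the answer, and your reading of the $\delta_{\epsilon(k),\epsilon(l)}$ term is plausible, but the identity that actually forces the answer is the missing ingredient you would need to name and use. (A secondary remark: the details you cite are in Appendix~A of \cite{P}, not \S 5.2.)
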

\begin{proof}
The full details are given in Appendix A of \cite{P}.
However, we remark that the computation of these Lie brackets is very 
lengthy, and makes repeated applications of the following Pfaffian 
identity of Dress-Wenzel.  
\end{proof}
\begin{Thm}[Dress-Wenzel \cite{DW}]
\label{thm:dw}
Let $I_1,I_2\subseteq \{1, \ldots, m\}$.  Write the
symmetric difference $I_1\Delta I_2=\{i_1,\ldots,i_\ell\}$ with 
$i_1 < \cdots < i_\ell$.  Then
$$
\sum_{\tau=1}^\ell (-1)^\tau \Pf_{I_1\Delta\{i_\tau\}}
\Pf_{I_2\Delta\{i_\tau\}}=0.$$
\end{Thm}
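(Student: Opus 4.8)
The plan is to derive this identity by realizing the Pfaffians of principal submatrices as coefficients in the exterior algebra $\bigwedge^\bullet\C^m$. Write $e_1,\dots,e_m$ for the standard basis, and for $S=\{s_1<\dots<s_k\}$ put $e_S=e_{s_1}\wedge\dots\wedge e_{s_k}$. Given the skew-symmetric matrix $A=(a_{ij})$, form the $2$-vector $\omega_A=\sum_{i<j}a_{ij}\,e_i\wedge e_j$ and the mixed-degree element $\Omega=\exp(\omega_A)=\sum_{k\geq 0}\tfrac{1}{k!}\,\omega_A^{\wedge k}$. The first point is the standard fact that $\Omega=\sum_S \Pf_S(A)\,e_S$, the sum running over subsets $S$ of even cardinality (with $\Pf_\emptyset=1$, and all odd-degree components of $\Omega$ vanishing); that is, $\Pf_S(A)$ is precisely the coefficient of $e_S$ in $\Omega$. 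One gets this by expanding $\omega_A^{\wedge k}$ and recognizing the Pfaffian expansion.

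Next I would establish a Ward-type identity: for each $i$ the interior product $\iota_{e_i^*}$ is an odd antiderivation of $\bigwedge^\bullet\C^m$ with $\iota_{e_i^*}\omega_A=\sum_j a_{ij}e_j$, so the antiderivation rule yields $\iota_{e_i^*}\Omega=\bigl(\sum_j a_{ij}e_j\bigr)\wedge\Omega$. Working then in the graded tensor square $\bigwedge^\bullet\C^m\otimes\bigwedge^\bullet\C^m$, introduce the operator
\[
\Theta=\sum_{i=1}^m\Bigl(\iota_{e_i^*}\otimes(e_i\wedge\,\cdot\,)+(e_i\wedge\,\cdot\,)\otimes\iota_{e_i^*}\Bigr)
\]
and compute $\Theta(\Omega\otimes\Omega)$ via the Ward identity. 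Since $\Omega$ has only even-degree components the Koszul signs are trivial here, and after relabeling the summation index in one of the two sums everything collapses to $\sum_{i,j}(a_{ij}+a_{ji})\,(e_j\wedge\Omega)\otimes(e_i\wedge\Omega)$, which vanishes by skew-symmetry of $A$. Hence $\Theta(\Omega\otimes\Omega)=0$.

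The last step is to read off the coefficient of $e_{I_1}\otimes e_{I_2}$. The summand $\iota_{e_i^*}\otimes(e_i\wedge\,\cdot\,)$ contributes only for $i\in I_2\setminus I_1$, producing a term $\pm\Pf_{I_1\cup\{i\}}(A)\,\Pf_{I_2\setminus\{i\}}(A)=\pm\Pf_{I_1\Delta\{i\}}(A)\,\Pf_{I_2\Delta\{i\}}(A)$, and $(e_i\wedge\,\cdot\,)\otimes\iota_{e_i^*}$ contributes only for $i\in I_1\setminus I_2$, again of the form $\pm\Pf_{I_1\Delta\{i\}}(A)\,\Pf_{I_2\Delta\{i\}}(A)$. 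As $(I_2\setminus I_1)\sqcup(I_1\setminus I_2)=I_1\Delta I_2=\{i_1,\dots,i_\ell\}$, this gives $\sum_{\tau=1}^\ell c_\tau\,\Pf_{I_1\Delta\{i_\tau\}}(A)\,\Pf_{I_2\Delta\{i_\tau\}}(A)=0$ for certain signs $c_\tau$. To conclude, I would compute $c_\tau$ from $\iota_{e_i^*}e_S=(-1)^{\#\{s\in S:\,s<i\}}e_{S\setminus\{i\}}$ and $e_i\wedge e_T=(-1)^{\#\{t\in T:\,t<i\}}e_{T\cup\{i\}}$: it comes out as $(-1)^{\#\{s\in I_1:\,s<i_\tau\}+\#\{s\in I_2:\,s<i_\tau\}}$, which modulo $2$ equals $\#\{s\in I_1\Delta I_2:\,s<i_\tau\}=\tau-1$, so $c_\tau=\pm(-1)^\tau$ with the overall sign independent of $\tau$. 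The asserted identity follows. (If $|I_1|$ or $|I_2|$ is even all terms vanish, so the only substantive case is $|I_1|,|I_2|$ both odd, in which $\ell$ is even.)

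The step I expect to take the most care is the sign bookkeeping — the Koszul signs in the graded tensor square and the identification $c_\tau=\pm(-1)^\tau$ — though it is entirely routine. A wholly elementary alternative is available: expand each $\Pf_S$ as a signed sum over perfect matchings of $S$ and build a fixed-point-free, sign-reversing involution on the triples (a matching of $I_1\Delta\{i_\tau\}$, a matching of $I_2\Delta\{i_\tau\}$, the index $\tau$). In the superposition of the two matchings — a disjoint union of alternating cycles together with $\ell/2$ alternating paths whose endpoints lie in $I_1\Delta I_2$ — one swaps the two colors along the path through $i_\tau$, which sends $i_\tau$ to the other endpoint of that path and preserves the monomial; the delicate part is checking that this flips $(-1)^\tau\,\epsilon(\mu_1)\,\epsilon(\mu_2)$, which is why I would favor the exterior-algebra argument.
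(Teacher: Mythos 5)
The paper offers no proof of this statement: Theorem \ref{thm:dw} is quoted verbatim from Dress and Wenzel and attributed to \cite{DW} without argument, so there is nothing in the paper to compare your proposal against. That said, your exterior-algebra proof is correct and self-contained. The identification $\exp(\omega_A)=\sum_S \Pf_S(A)\,e_S$ (sum over even-size $S$, with $\Pf_\emptyset=1$), the contraction identity $\iota_{e_i^*}\exp(\omega_A)=\bigl(\sum_j a_{ij}e_j\bigr)\wedge\exp(\omega_A)$, and the vanishing of $\Theta(\Omega\otimes\Omega)$ by skew-symmetry of $A$ after reindexing are all sound. Reading off the coefficient of $e_{I_1}\otimes e_{I_2}$ produces, for $i_\tau\in I_1\Delta I_2$, the sign $(-1)^{\#\{s\in I_1:\,s<i_\tau\}+\#\{s\in I_2:\,s<i_\tau\}}$, and since $\#\{s\in I_1:\,s<i_\tau\}+\#\{s\in I_2:\,s<i_\tau\}\equiv\#\{s\in I_1\Delta I_2:\,s<i_\tau\}=\tau-1\pmod{2}$, this equals $-(-1)^\tau$, a global sign independent of $\tau$, giving exactly the stated relation. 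Your closing parity remark is also correct: unless $|I_1|$ and $|I_2|$ are both odd, every summand contains a Pfaffian of an odd-order skew-symmetric submatrix (hence zero), and in the nontrivial case $\ell=|I_1\Delta I_2|$ is even. This is the standard ``exponential of a two-form'' derivation of the Grassmann--Pl\"{u}cker relations for Pfaffians and is a perfectly adequate replacement for the bare citation; the sign-reversing involution on superpositions of matchings that you sketch as an alternative is the more elementary route and closer in spirit to what one finds in combinatorial treatments of the same identity.
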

\par  
We next show that $\cL_m$ is free as an $\cO_{s_m}$-module.  To do this, 
we determine the coefficient matrix of the generators of $\cL_m$. \par
By the discussion in \S \ref{S:MatrComps} and \S \ref{ComplCholFac}, the 
bilinear form representation has the invariant subspaces
$\Sk_m(\C)^{(\ell,\ell)} = \Sk_m(\C) \cap M^{(\ell, \ell)}$, and the
kernels of the induced quotient representations for $0\leq \ell\leq m-3$ 
are
\begin{equation}
\label{Eqn9.9}
K_\ell=\left\{\begin{pmatrix}
\pm I_{m-\ell} & 0 \\ * & B_\ell \end{pmatrix}\in G_m\right\}.
\end{equation}
(The kernels for $\ell=m-2,m-1$ do not take this form.) We denote the Lie 
algebras of $K_{\ell}$ by $\bk_{\ell}$.  \par
For the decomposition, we consider $\Sk_m(\C)^{(\ell,\ell)}$ for $0\leq 
\ell\leq m-3$ (together with $\Sk_m(\C)$).  First, the complementary 
basis for $\Sk_m(\C)^{(m-3, m-3)}$ in $\Sk_m(\C)$ is $\{ \overline{e}_{1, 
2}, 
\overline{e}_{1, 3}, \overline{e}_{2, 3}\}$, and $\{ E_{1, 1}, E_{2, 2}, E_{3, 
3}\}$ is a 
complementary basis for $\bk_{m-3}$ in $\bg_m$.
\par 
For $\ell \leq m-3$, as earlier we let $\ell^{\prime} = m-\ell$, and use 
the complementary bases
$$  \{ \overline{e}_{1\, \ell^{\prime} + 1},   \dots ,
\overline{e}_{\ell^{\prime}\, \ell^{\prime} +
1}\} \quad \makebox{to
$\Sk_m(\C)^{(\ell-1, \ell - 1)}$ in $\Sk_m(\C)^{(\ell, \ell)}$. } $$
For the subgroups $K_\ell$, we use the corresponding complementary 
bases
$$  \{ E_{\ell^{\prime} + 1\,3}, \dots ,
E_{\ell^{\prime}+1\, \ell^{\prime}+1}\} \quad \makebox{ to
$\bk_{\ell-1}$ in $\bk_{\ell}$}. $$
\par
As 
$$\dim (\Sk_m(\C)^{(\ell, \ell)}/\Sk_m(\C)^{(\ell-1, \ell - 1)}) \,\, =  \,\, 
\ell^{\prime} \,\, =  \,\, \dim \bk_{\ell}/\bk_{\ell-1} + 1 \, , $$
we adjoin a single $\eta_k$ with $k = m -\ell-1 = \ell^{\prime} -1$.  We 
note that just as for $\xi_{ E_{\ell^{\prime} + 1\,j}}$, this 
$\eta_{\ell^{\prime}-1}$ has $0$ coefficients for the relative basis of 
$\Sk_m(\C)/\Sk_m(\C)^{(\ell, \ell)}$.  
\begin{Proposition}
\label{prop:nonlincoeff}
With the above relative bases (with the corresponding 
$\eta_{\ell^{\prime} -1}$ adjoined to the appropriate relative bases as 
indicated) the coefficient matrix of 
$\cL_m$ is block lower triangular with $m-2$ diagonal blocks
$\{D_{\ell}\}$ (as in \eqref{matr4.5}), where
at $A=(a_{ij})\in\Sk_m(\C)$,
$$
D_{m-2}(A)=\begin{pmatrix}
a_{12} & a_{12} & 0 \\
a_{13} & 0 & a_{13} \\
0 & a_{23} & a_{23}
\end{pmatrix} $$
and for $1 \leq \ell \leq m-3$, with $\ell^{\prime} = m - \ell$, there is 
the $\ell^{\prime} \times \ell^{\prime}$ diagonal block
\begin{equation}
\label{eqn:skewdiagblock}
D_{\ell}(A) =
\begin{pmatrix} 
a_{1,3} & \cdots & a_{1,\ell^{\prime} + 1}  & 0\\
\vdots & \ddots & \vdots & \vdots \\
a_{\ell^{\prime}-1,3} & \cdots & a_{\ell^{\prime} - 1, \ell^{\prime} + 1} & 
0 \\
a_{\ell^{\prime}, 3} & \cdots & a_{\ell^{\prime}, \ell^{\prime} + 1} 
& \Pf_{\{\epsilon(\ell^{\prime} - 1),\ldots, \ell^{\prime} - 1\}}(A)
\end{pmatrix}.
\end{equation}
Hence, the coefficient determinant for this block is
\begin{equation}
\label{eqn:skewcoeffdet}
 \det (D_{\ell}(A)) \,\, = \,\, \det (\hat{\hat A}^{(\ell^{\prime}-1)}) \cdot 
\Pf_{\{\epsilon(\ell^{\prime} - 1),\ldots, \ell^{\prime} - 1\}}(A)\, . 
\end{equation}
\end{Proposition}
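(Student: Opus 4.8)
The plan is to prove the Proposition by directly computing the coefficient matrix of the generating set $\{\xi_{E_{i,j}}\}\cup\{\eta_k\}$ of $\cL_m$ in bases adapted to the $G_m$-invariant flag $\Sk_m(\C)\supset\Sk_m(\C)^{(m-3,m-3)}\supset\cdots\supset\Sk_m(\C)^{(1,1)}\supset 0$ (invariance being Lemma \ref{Lem8.12}) and to the chain of subalgebras $\g_m\supset\bk_{m-3}\supset\cdots\supset\bk_1$, with each adjoined field $\eta_{\ell'-1}$ (where $\ell'=m-\ell$) assigned to the level of $\Sk_m(\C)^{(\ell,\ell)}/\Sk_m(\C)^{(\ell-1,\ell-1)}$. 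First I would establish the block lower triangular shape \eqref{matr4.5} with its $m-2$ diagonal blocks. For $v\in\bk_\ell$ the subgroup $K_\ell=\ker(\rho_{\ell,\ell})$ acts trivially on $\Sk_m(\C)/\Sk_m(\C)^{(\ell,\ell)}$, so by Lemma \ref{Lem4.1} the column of $\xi_v$ vanishes above the $\ell$-th diagonal block; and for the adjoined field one reads off from its definition that every $\bar e_{p,q}$ occurring in $\eta_{\ell'-1}(A)$ has $p>\ell'-1$, hence $q>p\geq\ell'$, so $\eta_{\ell'-1}(A)\in\Sk_m(\C)^{(\ell,\ell)}$ for all $A$ and its column likewise vanishes above that block.

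Next I would compute the diagonal blocks using $\xi_B(A)=BA+AB^T$ from \eqref{Eqn8.13}, noting that $\xi_{E_{k,\ell}}(A)=E_{k,\ell}A+AE_{\ell,k}$ is the matrix whose $k$-th row is the $\ell$-th row of $A$, whose $k$-th column is the $\ell$-th column of $A$, and which is zero elsewhere. For the top block $D_{m-2}$, the relative coefficient matrix of $G_m/K_{m-3}$ on $\Sk_m(\C)/\Sk_m(\C)^{(m-3,m-3)}$ against $\{\bar e_{1,2},\bar e_{1,3},\bar e_{2,3}\}$ and the complementary basis $\{E_{1,1},E_{2,2},E_{3,3}\}$ of $\g_m$ over $\bk_{m-3}$, one simply reads the $(1,2)$, $(1,3)$, $(2,3)$ entries of $E_{i,i}A+AE_{i,i}$ for $i=1,2,3$, which produces the displayed $3\times3$ matrix (whose determinant is $-2a_{12}a_{13}a_{23}$). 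For $D_\ell$ with $1\leq\ell\leq m-3$, the $\ell'-1$ columns attached to $\xi_{E_{\ell'+1,j}}$, $j=3,\dots,\ell'+1$, reduce modulo $\Sk_m(\C)^{(\ell-1,\ell-1)}$ to the $j$-th column of $A$ in rows $1,\dots,\ell'$, i.e. $(a_{1,j},\dots,a_{\ell',j})^{T}$. For the remaining column, attached to $\eta_{\ell'-1}$, among the summands $\Pf_{\{\epsilon(\ell'-1),\dots,\ell'-1,p,q\}}(A)\,\bar e_{p,q}$ with $\ell'-1<p<q\leq m$ the only one whose class is nonzero in $\Sk_m(\C)^{(\ell,\ell)}/\Sk_m(\C)^{(\ell-1,\ell-1)}$, namely the ones with $q=\ell'+1$, is $(p,q)=(\ell',\ell'+1)$; so this column has a single nonzero entry, the Pfaffian in position $(\ell',\ell')$, and zeros above it, yielding \eqref{eqn:skewdiagblock}.

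Finally, to obtain \eqref{eqn:skewcoeffdet} I would expand $\det(D_\ell)$ along its last column: since the nonzero entry sits in the bottom-right corner, $\det(D_\ell)$ equals that Pfaffian entry times the $(\ell'-1)\times(\ell'-1)$ minor got by deleting the last row and column of $D_\ell$. That minor is the submatrix of $A$ on rows $1,\dots,\ell'-1$ and columns $3,\dots,\ell'+1$, which is precisely the upper-left $(\ell'-1)\times(\ell'-1)$ block $\hat{\hat A}^{(\ell'-1)}$ of the matrix $\hat{\hat A}$ obtained by deleting the first two columns of $A$; this is \eqref{eqn:skewcoeffdet}. Multiplying the $\det(D_\ell)$ together with $\det(D_{m-2})$ then recovers \eqref{eqn:skewnonlinFD0} as a consistency check.

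The step I expect to be the main obstacle is the identification of the last column of each $D_\ell$: one must pin down exactly which term of $\eta_{\ell'-1}$ survives in the quotient $\Sk_m(\C)^{(\ell,\ell)}/\Sk_m(\C)^{(\ell-1,\ell-1)}$ and carefully track the index set of the resulting Pfaffian (this is also where any normalization/indexing conventions in \eqref{eqn:skewdiagblock} must be matched precisely). By contrast, the block-triangular shape, the linear columns of the diagonal blocks, and the cofactor expansion producing $\hat{\hat A}^{(\ell'-1)}$ are all routine once the subspaces $\Sk_m(\C)^{(\ell,\ell)}$ and the relation $\xi_{E_{k,\ell}}(A)=E_{k,\ell}A+AE_{\ell,k}$ are in hand; the Lie brackets needed for the eventual application of Saito's criterion are handled separately in Proposition \ref{prop:nonlinvfbracket}.
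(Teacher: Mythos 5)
Your proof follows essentially the same route as the paper's: block triangularity from Lemma \ref{Lem4.1} (and the observation that $\eta_{\ell'-1}$ lands in $\Sk_m(\C)^{(\ell,\ell)}$), direct computation of the top $3\times 3$ block, the columns $(a_{1,j},\ldots,a_{\ell',j})^T$ for the linear fields $\xi_{E_{\ell'+1,j}}$ with $j=3,\ldots,\ell'+1$, identification of the single surviving term $(p,q)=(\ell',\ell'+1)$ of $\eta_{\ell'-1}$ in the quotient, and a final-column cofactor expansion giving $\det(\hat{\hat A}^{(\ell'-1)})$ times the Pfaffian entry. You are a bit more explicit on each step than the paper, but the logic is identical.

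You were right to flag the index set of the Pfaffian as the delicate point, and you should have pushed that through: by the definition
$\eta_k(A)=\sum_{k<p<q\leq m}\Pf_{\{\epsilon(k),\ldots,k,p,q\}}(A)\,\bar e_{p,q}$
with $k=\ell'-1$, the coefficient of $\bar e_{\ell',\ell'+1}$ is
$\Pf_{\{\epsilon(\ell'-1),\ldots,\ell'-1,\ell',\ell'+1\}}(A)=\Pf_{\{\epsilon(\ell'+1),\ldots,\ell'+1\}}(A)$,
a Pfaffian of a $(\ell'+1-\epsilon(\ell'+1)+1)$-sized (even) block, \emph{not} $\Pf_{\{\epsilon(\ell'-1),\ldots,\ell'-1\}}(A)$ as printed in \eqref{eqn:skewdiagblock} and \eqref{eqn:skewcoeffdet}. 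The Example with $m=4$ confirms this: there $\ell=1$, $\ell'=3$, and the surviving entry of $\eta_2$ is $\Pf_{\{1,2,3,4\}}(A)=\Pf(A)$, not $\Pf_{\{1,2\}}(A)=a_{12}$. Moreover only the corrected index set makes the product $\prod_{\ell=1}^{m-3}\det(D_\ell)\cdot\det(D_{m-2})$ agree with $\prod_{k=1}^{m-2}\det(\hat{\hat A}^{(k)})\cdot\prod_{k=2}^{m}\Pf_{\{\epsilon(k),\ldots,k\}}(A)$ in \eqref{eqn:skewnonlinFD0}: with $\ell'+1$ ranging over $4,\ldots,m$ one gets exactly the Pfaffian factors for $k=4,\ldots,m$, while $D_{m-2}$ supplies $k=2,3$; with the index set as printed you would double-count $k=2,3$ and omit $k=m-1,m$. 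So the Proposition statement as written carries a typo (a slip already present in the paper's own proof); a careful execution of the plan you outlined not only closes your flagged gap but detects it.
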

\begin{proof}
We claim that the coefficient matrix with respect to the two sets of 
bases is block lower triangular with $m-2$ blocks.  The first block 
corresponds to $\bg_m/K_{m-3}$ and a direct calculation shows it is the 
$3 \times 3$ block $D_{m-2}$ in the proposition.  
For the subsequent blocks, we note by Lemma \ref{Lem4.1} and the remark 
concerning $\eta_k$ preceding the proposition, that the columns 
corresponding to $\{ E_{\ell^{\prime} + 1\,3}, \dots , E_{\ell^{\prime}+1\, 
\ell^{\prime}+1}, \eta_{\ell^{\prime} - 1} \}$ will be $0$ above the 
$\ell^{\prime} \times \ell^{\prime}$ diagonal block $D_{\ell}$.  \par
Moreover, for this block, by the calculations carried out in \S 
\ref{S:CholFact},
the upper left $(\ell^{\prime}-1) \times (\ell^{\prime}-1)$ submatrix is  
$\hat{\hat A}^{(\ell^{\prime}-1)}$ (because $E_{\ell^{\prime} + 1\,1}$ and 
$E_{\ell^{\prime} + 1\,2}$ are missing in the basis for 
$\bk_{\ell}/\bk_{\ell-1}$).  Also, by the form of $\eta_{\ell^{\prime} - 
1}$, the column for it will only have an entry 
$\Pf_{\{\epsilon(\ell^{\prime} - 1),\ldots, \ell^{\prime} - 1\} }$ in the 
last row of the block.  Thus, $D_{\ell}$ and $\det (D_{\ell})$ have the 
forms as stated.
\end{proof}
\par
Then, applying Proposition \ref{prop:nonlincoeff} to each diagonal block 
yields as the coefficient determinant (up to sign) the left-hand side of 
(\ref{eqn:skewnonlinFD0}).  
Lemma \ref{Lem7.4} can be used as in earlier cases to show that the 
determinant is reduced.  Thus, by Saito's Criterion $\cL_m$ is a free 
$\cO_{s_m}$ module which defines a free divisor on $\Sk_m(\C)$ with 
defining equation (\ref{eqn:skewnonlinFD0}).  
\par
Lastly, since the degree $0$ subalgebra $\bg_m$ of $\cL_m$ is solvable, 
the solvability of $\cL_m$ follows from the next lemma, completing the 
proof of the Theorem.  
\end{proof}
\begin{Lemma}
\label{solLiealg}
A holomorphic Lie algebra $\cL$ generated by homogeneous vector fields 
of degree $ \geq 0$
 is solvable if and only if the degree $0$ subalgebra is solvable.
\end{Lemma}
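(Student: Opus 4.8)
The plan is to exploit the grading by polynomial degree that the hypothesis imposes on $\cL$. Since $\cL$ is generated over $\cO_{V,0}$ by vector fields $\theta_1,\dots,\theta_N$ homogeneous of degrees $d_1,\dots,d_N\ge 0$, every element of $\cL$ decomposes into homogeneous vector-field components of nonnegative degrees, and one checks that each such component again lies in $\cL$; writing $\cL_{[d]}$ for the $\C$-span of the degree-$d$ components that occur, we get $\cL=\bigoplus_{d\ge 0}\cL_{[d]}$ (in the completed sense), with $[\cL_{[a]},\cL_{[b]}]\subseteq \cL_{[a+b]}$. Each $\cL_{[d]}$ is finite dimensional, being contained in the space of homogeneous degree-$d$ polynomial vector fields on $V$, and $\cL_{[0]}$ — which consists of linear vector fields and is thus a finite-dimensional subalgebra of $\gl(V)$ — is exactly the degree-$0$ subalgebra of the statement. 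The forward implication is then immediate: a subalgebra of a solvable Lie algebra is solvable, so if $\cL$ is solvable then so is $\cL_{[0]}=\bg_m$-type piece.

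For the converse, assume $\cL_{[0]}$ is solvable. Set $\cL_{\ge k}:=\bigoplus_{d\ge k}\cL_{[d]}$; additivity of degrees under bracket makes each $\cL_{\ge k}$ an ideal of $\cL$, and $\cL/\cL_{\ge 1}\cong \cL_{[0]}$. First I would note that the positive-degree part $\cL_+=\cL_{\ge 1}$ is ``pronilpotent'': the $j$-th term of its lower central series lies in $\cL_{\ge j}$, since every bracket raises degree by at least $1$. Now fix $k$: the quotient $\cL/\cL_{\ge k}=\bigoplus_{d<k}\cL_{[d]}$ is a finite-dimensional Lie algebra; the image of $\cL_+$ there is a nilpotent ideal (its lower central series is supported in degrees $\ge j$, hence vanishes once $j\ge k$), and the image of $\cL_{[0]}$ is a solvable subalgebra complementing it, so $\cL/\cL_{\ge k}$ is solvable. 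Since $\bigcap_k\cL_{\ge k}=0$, the $\{\cL_{\ge k}\}$ form a descending chain of ideals of $\cL$ with solvable quotients converging to $\cL$; that is, $\cL=\varprojlim_k \cL/\cL_{\ge k}$ is an inverse limit of finite-dimensional solvable Lie algebras, which is what solvability of such a holomorphic Lie algebra amounts to. Hence $\cL$ is solvable.

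The step I expect to be the real content, as opposed to routine bookkeeping, is this last one: the derived series of $\cL_+$ descends into arbitrarily high degree but need not reach $0$ in finitely many steps over $\C$, so one must deliberately choose the degree filtration (not the $\itm$-adic one) as the bridge and verify that it is a filtration by ideals of $\cL$ with finite-dimensional, solvable successive truncations. The remaining ingredients — existence of the grading, finite-dimensionality of $\cL_{[0]}$ and of each $\cL_{[d]}$, and the degree estimate on the lower central series of $\cL_+$ — are straightforward. For the application to the $\cL_m$ of Theorem \ref{Thm.nonlinLiealg} one can also bypass the general framework entirely: by Proposition \ref{prop:nonlinvfbracket} the nonlinear generators $\eta_k$ span the positive-degree part, $[\eta_k,\eta_l]$ is a positive-degree multiple of $\eta_l$, and $[\xi_{E_{p,q}},\eta_k]\in\{0,\eta_k\}$, so the degree filtration of $\cL_m$ is explicit and each truncation is visibly an extension of $\bg_m$ by a nilpotent ideal.
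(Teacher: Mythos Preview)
Your argument is correct and rests on the same key observation as the paper: the degree filtration $\cL_{\ge k}$ (the paper writes $\cL^{(k)}$) consists of ideals with abelian successive quotients for $k\ge 1$, and $\cL/\cL_{\ge 1}\cong L_0$ is the given solvable degree-$0$ piece. The paper's proof simply refines the top of this chain by pulling back the derived series of $L_0$ and declares that the resulting (infinite) chain of ideals with abelian quotients exhibits solvability; it does not pass through finite-dimensional truncations or inverse limits at all. Your route via the finite-dimensional quotients $\cL/\cL_{\ge k}$, each an extension of the solvable $\cL_{[0]}$ by a nilpotent ideal, is more explicit about what solvability should mean in this infinite-dimensional setting and makes the ``prosolvable'' nature of $\cL$ transparent; the paper's version is shorter but leaves that point implicit. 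One small caveat: the equality $\cL=\varprojlim_k \cL/\cL_{\ge k}$ need not hold literally (the inverse limit can be strictly larger, containing formal rather than convergent coefficients), but you only need the injection $\cL\hookrightarrow\varprojlim_k \cL/\cL_{\ge k}$, which follows from $\bigcap_k\cL_{\ge k}=0$, so the conclusion is unaffected.
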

\begin{proof}  Let $L_0$ denote the Lie algebra of vector fields of degree 
zero (it is a linear Lie algebra). Also, let $\cL^{(k)}$ denote the 
holomorphic sub-Lie algebra generated by the homogeneous vector fields 
of degree $\geq k$.  Then, as $[\cL^{(k)}, \cL^{(j)}] \subset \cL^{(k+j)}$, it 
follows that the Lie algebra $\cL^{(k)}/ \cL^{(k+1)}$ is abelian for $k \geq 
1$.  Lastly, the projection induces an isomorphism $\cL/ \cL^{(1)} \, =\, 
\cL^{(0)}/ \cL^{(1)} \simeq L_0$.  This is solvable by assumption.  Hence, if 
we adjoin to $\{ \cL^{(k)}\}$ the pullback of the derived series of $L_0$ 
via the projection of $\cL$ onto $L_0$, we obtain a filtration by 
subalgebras, each an ideal in the preceding, whose successive quotients 
are abelian.  Hence, $\cL$ is solvable.  \par
For the reverse direction we just note that $L_0$, as a quotient of the 
solvable Lie algebra $\cL$, is solvable.
\end{proof}

\section{Block Representations by Restriction and Extension}
\label{S:RestrBlocReps}
\par
In this section we apply the restriction and
extension properties of block representations to obtain free divisors 
which will be used in part II. \par
Suppose $\rho : G \to \GL(V)$ is a block representation with associated 
decomposition
$$   V = W_k \supset W_{k-1} \supset \cdots  \supset W_{1}  \supset
W_{0} = (0)  $$
 with $K_j = \ker(\rho_j)$ for the induced representation $\rho_j : G \to
\GL(V/W_j)$. \par
If we restrict to the representation of $K_m$ on $W_m$, we will obtain a 
decomposition descending from $W_m$ with corresponding normal 
subgroups $K_j$.  We already know that the resulting coefficient matrix 
has the necessary block triangular form.  
There is a problem because the corresponding relative
coefficient determinants are those for $\rho$ restricted to the
subspace $W_m$.  Although the relative coefficient determinants were
reduced and relatively prime as polynomials on $V$, this may not
continue to hold on $W_m$.  \par
A simple example illustrating this problem occurs for the bilinear form
representation of $B_2$ on $\Sym_2(\C)$.  Suppose we restrict to the 
subspace
$W_1 \subset \Sym_2(\C)$ of symmetric matrices with upper left entry $ 
= 0$.  
The corresponding normal subgroup of $B_2$ has upper left entry $ = 1$.  
In terms of the basis used in  \S \ref{S:CholFact}, the coefficient matrix 
is
$ A = \begin{pmatrix}
0 & a_{1\, 2} \\ a_{1\, 2}  &  a_{2\, 2}
\end{pmatrix}$.
Thus, the relative coefficient matrix is $a_{1\, 2}^2$, so it is a
nonreduced block representation.  \par
Nonetheless, in many cases of interest we may restrict a tower
of block representations by modifying the lowest degree one to obtain 
another tower of block representations. 

\subsection*{Restricted Symmetric Representations} \hfill
\par 
We consider several restrictions of the tower of representations $\{
(B_m, \Sym_m)\}$.   First, for the subrepresentations $\{ (G_m, W_{m-
1})\}$ for $m\geq 3$.  Here $G_m \subset B_m$ is the subgroup of 
matrices $B = (b_{i\,
j}) \in B_m$ with entries $b_{2\, 1} = 0$ so that the upper left $3 \times
3$-block has the form a) in (\ref{Eqn12.5}).
\begin{equation}
\label{Eqn12.5}
a) \quad \begin{pmatrix}
*  & 0  & 0  \\
0  & *  & 0  \\
 * & *  &*
\end{pmatrix}  \qquad b)  \quad
\begin{pmatrix}
*  & 0  & 0  &0   \\
0  & *  & 0  &0   \\
0  & * & *  &0   \\
0  & * & *  &*  &
\end{pmatrix} \,\, .
\end{equation}
As in \S \ref{S:CholFact}, we let $W_{m-1} = \Sym_m^{(m-1, m-1)}(\C) 
\subset \Sym_m(\C)$, which is the subspace of symmetric matrices with  
the upper left entry equal to $0$.  With the same inclusions as for 
$\{ (B_m, \Sym_m(\C))\}$, $\{ (G_m, W_{m-1})\}$ is again a tower of 
representations.  
\par
Second, we consider the restriction of the same tower
$\{ (B_m, \Sym_m(\C))\}$ but to the subspace $W_{m-2}$, which consists 
of matrices with the upper left hand $2 \times 2$ block equal to $0$.  We
only consider the tower beginning with $m \geq 4$.  This time we choose
$G_m$ to be the subgroup of $B_m$ consisting of matrices with upper left
$4 \times 4$-block of the form b) in (\ref{Eqn12.5}).
\begin{Proposition}
\label{Prop12.4}
The two restrictions of the tower $\{ (B_m, \Sym_m(\C))\}$ define block 
representations of towers.  Thus, the exceptional orbit varieties are free 
divisors and have defining equations given by: for the first case
\begin{equation}
\label{Eqn12.1a}
  - a_{1\, 2}\,  a_{2\, 2} \cdot (a_{3\, 3} a_{1\, 2}^2  -
2  a_{2\, 3} a_{1\, 2} a_{1\, 3}  +   a_{2\, 2} a_{1\, 3}^2)\, \cdot \prod_{k 
= 4}^{m} \det(A^{(k)}_{1}) \, \, = \,\, 0\, ; 
\end{equation}
and for the second case 
\begin{equation}
\label{Eqn12.1b}
  - a_{1\, 3}\, a_{2\, 3}\cdot  (a_{1\, 3}a_{2\, 4}  - a_{1\, 4} a_{2\,
3})\cdot ( a_{3\, 3}a_{2\, 4}^2  - 2 a_{3\, 4} a_{2\, 4} a_{2\,
3}  + a_{4\, 4} a_{2\, 3}^2) \, \cdot \prod_{k = 5}^{m} \det(A^{(k)}_{2}) \, \, 
= \,\, 0\, . 
\end{equation}
where $A^{(k)}_{r}$ denotes the upper left $k \times k$ submatrix of 
$A_{r}$, which is obtained from $A$ by setting $a_{i, j} = 0$ for $1 \leq i, 
j \leq r$. 
\end{Proposition}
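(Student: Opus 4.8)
The plan is to verify, for each of the two restricted towers, the levelwise criterion of Proposition~\ref{Prop7.2}, reading off the relative coefficient determinants from the bilinear representation vector fields \eqref{Eqn8.13} of \S\ref{S:MatrComps} and checking reducedness with Lemma~\ref{Lem7.4}. The guiding remark is that $G_m$ is obtained from $B_m$ by deleting exactly the generator $E_{2,1}$ of its Lie algebra in the first case, and the generators $E_{2,1},E_{3,1},E_{4,1}$ in the second: a naive restriction produces nonreduced relative coefficient determinants (this is the phenomenon exhibited by the example of $B_2$ acting on $\Sym_2(\C)$), and this deletion, together with a refinement of the chain of invariant subspaces, is precisely what restores reducedness. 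Most of the decomposition is inherited from $\{(B_m,\Sym_m(\C))\}$: in the block triangular form of Proposition~\ref{Prop4.5} the diagonal block with relative coefficient determinant $\det(A^{(k)})$ uses a complementary Lie algebra basis of elementary matrices $E_{k,i}$, $i=1,\dots,k$, and for $k\geq 4$ (resp.\ $k\geq 5$) none of these is among the deleted generators, so these blocks persist under the restriction; their relative coefficient determinants, restricted to $W_{m-1}$ (resp.\ $W_{m-2}$), where the prescribed top-left entries vanish, become $\det(A^{(k)}_{1})$ for $k=4,\dots,m$ (resp.\ $\det(A^{(k)}_{2})$ for $k=5,\dots,m$).

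Next I would compute the new bottom blocks. For the first tower I would refine the step $W_{m-1}\supset\Sym_m(\C)^{(m-2,m-2)}$ by inserting the $G_m$-invariant subspace $W'=\Sym_m(\C)\cap M^{(m-1,m-2)}$ (the symmetric matrices with $a_{11}=a_{12}=0$). One checks that $\ker(G_m\to\GL(W_{m-1}/W'))$ and $\ker(G_m\to\GL(W_{m-1}/\Sym_m(\C)^{(m-2,m-2)}))$ have dimensions $\dim W'$ and $\dim\Sym_m(\C)^{(m-2,m-2)}$, and that the complementary representation vector fields, computed from \eqref{Eqn8.13}, give two new $1\times1$ diagonal blocks whose relative coefficient determinants are (up to nonzero scalars) $a_{12}$ and $a_{22}$, while the $3\times3$ block for $\Sym_m(\C)^{(m-2,m-2)}/\Sym_m(\C)^{(m-3,m-3)}$ has relative coefficient matrix $A^{(3)}_{1}$, of determinant $-(a_{33}a_{12}^2-2a_{23}a_{12}a_{13}+a_{22}a_{13}^2)$. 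It is essential here that $E_{1,1}$ still lies in the Lie algebra of $G_m$: restricting instead to the kernel $K_{m-1}$ of $B_m\to\GL(\Sym_m(\C)/W_{m-1})$, whose Lie algebra omits $E_{1,1}$, one is forced to pair the new blocks with $E_{2,1}$ and $E_{2,2}$ and one obtains the square $a_{12}^2$. Multiplying over all diagonal blocks then gives $\pm\,a_{12}\,a_{22}\prod_{k=3}^{m}\det(A^{(k)}_{1})$, which is the left-hand side of \eqref{Eqn12.1a}. For the second tower the analogous refinement inserts two intermediate subspaces and redistributes the generators remaining in the Lie algebra of $G_m$ after deleting $E_{2,1},E_{3,1},E_{4,1}$; the same computation with \eqref{Eqn8.13} produces the factors $a_{13}$, $a_{23}$, the $2\times2$ minor $a_{13}a_{24}-a_{14}a_{23}$, and the cubic $a_{33}a_{24}^2-2a_{34}a_{24}a_{23}+a_{44}a_{23}^2$, so that the full product is the left-hand side of \eqref{Eqn12.1b}.

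Finally I would check that these products are reduced and that the relative coefficient determinants are relatively prime in pairs, so that Proposition~\ref{Prop7.2} applies, with base cases $(G_3,\Sym_3(\C)^{(2,2)})$ and $(G_4,\Sym_4(\C)^{(2,2)})$ the explicit finite computations just described, and inductive step adjoining the single block $\det(A^{(m)}_{1})$, resp.\ $\det(A^{(m)}_{2})$. The factors $a_{12},a_{22}$ (resp.\ $a_{13},a_{23}$) are linear, hence irreducible; $a_{13}a_{24}-a_{14}a_{23}$ is an irreducible quadratic form; and the two cubics together with the determinants $\det(A^{(k)}_{r})$ are irreducible by the now-familiar induction via Lemma~\ref{Lem7.4}(ii): the partial derivative of $\det(A^{(k)}_{r})$ in the $a_{k,k}$ direction is $\det(A^{(k-1)}_{r})$, irreducible by the induction hypothesis, and one exhibits symmetric test matrices---of the block-diagonal type used in \eqref{eqn:symmexample} and \eqref{eqn:modluirredexs}, adapted so that the prescribed top-left entries vanish---at which the previous determinant vanishes while the current one does not; relative primality of the factors then follows from comparing polynomial degrees and evaluating at such test points. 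Combined with the inherited blocks, this shows that each of the two restrictions is a tower of block representations, and Theorem~\ref{Towerthm}(i) (via Theorem~\ref{BlkRepFrDiv}) gives that the exceptional orbit varieties are free divisors with reduced defining equations \eqref{Eqn12.1a} and \eqref{Eqn12.1b}.

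I expect the main obstacle to lie in the third-and-a-half step, i.e.\ in the bookkeeping of the previous two paragraphs: choosing the refined chains of invariant subspaces and, within them, the complementary Lie algebra bases at the bottom of each tower so that the new relative coefficient determinants come out as single powers of $a_{12},a_{22}$ (resp.\ $a_{13},a_{23}$, the quadratic minor, and the cubic) rather than as proper powers, and then reading off the diagonal blocks from \eqref{Eqn8.13}. Once the chains are fixed, the block triangular form is automatic from Proposition~\ref{Prop4.5}, the low-degree blocks are a short direct calculation, and the irreducibility is exactly the Lemma~\ref{Lem7.4} argument already used in \S\ref{S:CholFact}--\S\ref{S:ModCholreps}.
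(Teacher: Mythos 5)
Your approach is correct and close in spirit to the paper's, but you take a genuinely finer route at the bottom of the tower. The paper also invokes the Restriction Property (Proposition~\ref{prop5.2}, not Proposition~\ref{Prop7.2} as your lead sentence says, though in practice both mechanisms apply here), but it treats the whole bottom lump $W_{m-1}/W_{m-3}$ (resp.\ $W_{m-2}/W_{m-4}$) as a \emph{single} $5\times 5$ (resp.\ $7\times 7$) diagonal block: it writes down the relative coefficient matrix with respect to the complementary bases $\{E_{1\,1},E_{2\,2},E_{3\,2},E_{3\,3},E_{4\,2},E_{4\,3},E_{4\,4}\}$ and $\{e_{1\,3},\dots,e_{4\,4}\}$, computes the single determinant, and reads off the factorization by hand. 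The higher blocks are then inherited by restriction and checked reduced/relatively prime via Lemma~\ref{Lem7.4}, exactly as you say.

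Your refinement is legitimate: for the first case the chain $W_{m-1}\supset W'\supset\Sym_m^{(m-2,m-2)}\supset\Sym_m^{(m-3,m-3)}$ with $W'=\Sym_m\cap M^{(m-1,m-2)}$ does split the $5\times 5$ block into $1+1+3$, the one-dimensional kernels $\{b_{11}b_{22}=1\}$ and $\{b_{11}=b_{22}=\pm 1\}$ have the right dimensions, and the complementary Lie algebra elements $E_{1\,1}$ and $E_{1\,1}-E_{2\,2}$ produce exactly $a_{12}$ and $a_{22}$ as the $1\times 1$ blocks. Your observation that replacing $G_m$ by the kernel $K_{m-1}$ (i.e.\ trading $E_{1\,1}$ for $E_{2\,1}$) forces a $2\times 2$ block with determinant $-a_{12}^2$ is correct and explains why $G_m$ is chosen as it is; this is the same phenomenon as the $B_2$/$\Sym_2$ example at the start of \S\ref{S:RestrBlocReps}. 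For the second case the analogous split $1+1+2+3$ also works, with intermediate invariant subspaces $W_{m-2}\cap\{a_{13}=0\}\supset\Sym_m^{(m-2,m-3)}\supset\Sym_m^{(m-2,m-4)}$, giving the $1\times 1$, $1\times 1$, $2\times 2$, $3\times 3$ diagonal blocks whose determinants are $a_{13}$, $a_{23}$, $-(a_{13}a_{24}-a_{14}a_{23})$, and $-(a_{33}a_{24}^2-2a_{34}a_{24}a_{23}+a_{44}a_{23}^2)$; note this requires \emph{three} intermediate subspaces, not the ``two'' you wrote. What your approach buys is a cleaner explanation of where each irreducible factor comes from (each is literally a small block determinant, so you never have to factor a $7\times 7$ determinant by inspection); what it costs is the extra bookkeeping of identifying the intermediate invariant subspaces and the matching kernels, a step the paper sidesteps by computing the lumped determinant once for $m=4$.
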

\begin{Remark}
The middle term in (\ref{Eqn12.1a}) is the determinant of the generic
$3 \times 3$ symmetric matrix with $a_{1\, 1} = 0$ and for 
(\ref{Eqn12.1b}) it is minus the determinant of the $3
\times 3$ lower-right submatrix of $A^{(4)}_{1}$ (so $a_{2\, 2} = 0$), and 
it is reduced.  \par
\end{Remark}
\begin{proof}
The proof of each statement is similar so we just consider the second 
case.
It is the restriction of the tower $\{ (B_m, \Sym_m(\C))\}$ to the 
subspace 
$W_{m-2}$, which consists of matrices with the upper left hand $2 
\times 2$ block equal to $0$.  Then, we will apply the Restriction 
Property, Proposition \ref{prop5.2}.  \par
It is only necessary to consider the diagonal block corresponding to
$W_{m-2}/W_{m-4}$ and $G_m/K_{m-4}$.  It is sufficient to consider the
subrepresentation on $W_2 \subset \Sym_4(\C)$. We use the 
complementary
bases
$$  \{E_{1\, 1},  E_{2\, 2}, E_{3\, 2},
E_{3\, 3}, E_{4\, 2}, E_{4\, 3}, E_{4\, 4}\} \quad \makebox{to $\bk_{m-4}$
in $\bg_m$, and } $$
$$  \{ e_{1\, 3}, e_{2\, 3}, e_{3\, 3}, e_{1\, 4}, e_{2\, 4}, e_{3\, 4}, e_{4\,
4}\} \quad \makebox{to  $W_{m-4}$ in $W_{m-2}$} $$
(using the notation of \S  \ref{S:CholFact}).  \par
The corresponding relative coefficient matrix has the form
$$ \begin{pmatrix}
a_{1\, 3}  & 0  & 0  & a_{1\, 3}  &   0       &      0        &           0          \\
0  & a_{2\, 3} &  0  &  a_{2\, 3} &    0       &      0       &           0          \\
0  &  0 &  a_{2\, 3}  &  a_{3\, 3} &    0      &      0        &          0           \\
a_{1\, 4}  & 0   &   0  &     0     &        0     &   a_{1\, 3}  &    a_{1\, 4}   \\
0 &   a_{2\, 4}  &  0   &   0       &        0      &   a_{2\, 3}  &    a_{2\, 3}  \\
0 &  0   &  a_{2\, 4}  & a_{3\, 3}  &  a_{2\, 3}  &  a_{3\, 3}  &   a_{3\, 4}
\\
0  &    0    &       0     &     0          &  a_{2\, 4}  &       0     &      a_{4\, 4}
\end{pmatrix}  $$
This has for its determinant the reduced polynomial
$$  - a_{1\, 3}\, a_{2\, 3}\cdot  (a_{1\, 3}a_{2\, 4}  - a_{1\, 4} a_{2\,
3})\cdot ( a_{3\, 3}a_{2\, 4}^2  - 2 a_{3\, 4} a_{2\, 4} a_{2\,
3}  + a_{4\, 4} a_{2\, 3}^2) \, . $$  
\par
Then, the subsequent relative coefficient determinants are those for
$(B_m, \Sym_m(\C))$, but with $a_{1\, 1} = a_{1\, 2} = a_{2\, 2} = 0$.  
Just 
as for the unrestricted case, we see using Lemma \ref{Lem7.4} that they 
are reduced  and relatively prime.  Hence, we obtain a tower of block 
representations.  Thus, the exceptional orbit variety is free with defining 
equation the product of the relative coefficient determinants.
\end{proof}

\subsection*{Restricted General Representations} \hfill
\par  
We second consider the restrictions of the tower of block representations 
formed from  $(B_m \times C_m, M_{m, m})$ and $(B_{m-1} \times C_{m} 
, M_{m-1, m})$ as in \S \ref{S:CholFact}.  These together form a tower
of block representations.  We consider the restriction to
the subspaces where $a_{1, 1} = 0$ for $m \geq 3$.  We replace $B_m$ by 
the subgroup $B_m^{\prime}$ with upper left hand $2 \times 2$ matrix a 
diagonal matrix.  \par
\begin{Proposition}
\label{Prop12.5}
For restrictions of the tower formed from $(B_m \times C_m, M_{m, m})$ 
and $(B_{m-1} \times C_{m} , M_{m-1, m})$ define block representations 
of towers so the exceptional orbit varieties are free divisors and have 
defining equations given by: for $M_{m, m}$ with $m \geq 3$, 
\begin{equation}
\label{Eqn12.2a}
  a_{1\, 2}\, a_{2\, 1}\,  a_{2\, 2}\cdot (a_{1\, 2} a_{2\, 3} - a_{1\, 3} 
a_{2\, 2}) \, \cdot \prod_{k =3}^{m}
\det(A^{(k)}_{1})\cdot \prod_{k =3}^{m-1}\det(\hat A^{(k)}_{1}) \,\, = \,\, 0 
\, ; 
\end{equation}
and for $M_{m-1, m}$, with $m \geq 3$,
\begin{equation}
\label{Eqn12.2b}
 a_{1\, 2}\, a_{2\, 1}\,  a_{2\, 2}\cdot (a_{1\, 2} a_{2\, 3} - a_{1\, 3} 
a_{2\, 2}) \,  \cdot \prod_{k =3}^{m-1} \det(A^{(k)}_{1})\cdot \prod_{k 
=3}^{m-1}\det(\hat A^{(k)}_{1}) \,\, = \,\, 0 
\end{equation}
with $A^{(k)}_{1}$ as defined earlier. 
\end{Proposition}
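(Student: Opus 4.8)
The plan is to follow the pattern of the proof of Proposition \ref{Prop12.4}, applying the Restriction Property (Proposition \ref{prop5.2}) to the tower of block representations $\{(B_m\times C_m, M_{m,m})\}$ together with $\{(B_{m-1}\times C_m, M_{m-1,m})\}$ constructed in Theorem \ref{ModCholRep}. As in the last paragraph of that proof, the two families form a single tower and the $(m-1)\times m$ case is recovered from the $m\times m$ case by the Quotient Property (Proposition \ref{prop5.1}) applied to the bottom $M_{m,m}^{(1,0)}$-block, so it is enough to treat $\{(B_m'\times C_m, M_{m,m})\}$. Let $W\subset M_{m,m}$ be the invariant subspace $\{a_{1\,1}=0\}$; its invariance under $B_m\times C_m$ is immediate since $(BAC^{-1})_{1\,1}=b_{1\,1}a_{1\,1}$ when $B$ is lower triangular and $(C^{-1})_{1\,1}=1$. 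One checks $\dim(B_m'\times C_m)=\dim B_m-1+\dim C_m=m^2-1=\dim W$, so the restricted action is equidimensional, and it stays faithful, hence has finite kernel.

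Above the upper-left $3\times 3$ corner the block structure is inherited unchanged, modulo restriction to $W$: the invariant subspaces are the intersections with $W$ of the spaces $M^{(\ell,\ell)}$, $M^{(\ell,\ell-1)}$ used in Theorem \ref{ModCholRep}, the kernels are the corresponding subgroups (up to connected components, as provided by Proposition \ref{prop5.2}), and the relative coefficient determinants are the restrictions $\det(A^{(k)})|_W=\det(A^{(k)}_{1})$ and $\det(\hat A^{(k)})|_W=\det(\hat A^{(k)}_{1})$ for $k\ge 3$. That each of these polynomials is irreducible and that they are pairwise distinct follows by induction on $k$ from Lemma \ref{Lem7.4}(ii), using the differentiation identities
$$\pd{\det(A^{(k)}_{1})}{a_{k\,k}}=\det(A^{(k-1)}_{1}),\qquad \pd{\det(\hat A^{(k)}_{1})}{a_{k\,k+1}}=\det(\hat A^{(k-1)}_{1})$$
(unaffected by $a_{1\,1}=0$ since $k\ge 3$) together with diagonal-plus-minor test matrices of the shape of \eqref{eqn:modluirredexs}.

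The heart of the argument is the low-degree part of the block structure. Passing from $B_m$ to $B_m'$ deletes the generator $E_{2\,1}$ from $\bb_m$, which forces the blocks that in Theorem \ref{ModCholRep} carried $\det(A^{(1)})=a_{1\,1}$, $\det(\hat A^{(1)})=a_{1\,2}$, $\det(A^{(2)})$ and $\det(\hat A^{(2)})$ to coalesce into a single diagonal block. By naturality this block depends only on the upper-left $3\times 3$ corner of $A$, so it suffices to compute it in the base case $m=3$: one writes out, using the representation vector fields \eqref{Eqn8.9} and an explicit complementary basis (in the style of the $7\times 7$ matrix in the proof of Proposition \ref{Prop12.4}), the $5\times 5$ relative coefficient matrix for $B_3'\times C_3$ acting on $\{a_{1\,1}=0\}\subset M_{3,3}$, and verifies that its determinant is, up to sign,
$$a_{1\,2}\,a_{2\,1}\,a_{2\,2}\cdot(a_{1\,2}a_{2\,3}-a_{1\,3}a_{2\,2}).$$
The three linear factors are irreducible and distinct, $a_{1\,2}a_{2\,3}-a_{1\,3}a_{2\,2}$ is a generic $2\times 2$ minor and hence irreducible and coprime to them, and none of these four factors occurs among the $\det(A^{(k)}_{1})$, $\det(\hat A^{(k)}_{1})$ with $k\ge 3$; so the full product of relative coefficient determinants is reduced and pairwise relatively prime. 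The Restriction Property then yields that $(B_m'\times C_m, W)$ is a block representation, Theorem \ref{BlkRepFrDiv} identifies the exceptional orbit variety as a linear free divisor with reduced defining equation the product of the relative coefficient determinants — which is the left side of \eqref{Eqn12.2a} — and quotienting by the determinant block as in Theorem \ref{ModCholRep} produces \eqref{Eqn12.2b}; pulling these structures back along the tower projections shows they assemble into towers of block representations.

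The main obstacle I anticipate is the base-case $5\times 5$ computation, together with the bookkeeping needed to confirm that deleting $E_{2\,1}$ and coarsening the flag accordingly is precisely what makes Proposition \ref{prop5.2} applicable — in particular that the coalesced block is genuinely square and its determinant is the stated reduced degree-five polynomial rather than something nonreduced or with the wrong factors. Everything above that block is a direct restriction of what was already proved in Theorems \ref{CholTow} and \ref{ModCholRep}.
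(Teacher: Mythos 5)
Your proposal is essentially the paper's own proof: restrict along the tower from Theorem~\ref{ModCholRep}, note that the high-index diagonal blocks restrict unchanged to $\{a_{1\,1}=0\}$ with relative coefficient determinants $\det(A^{(k)}_1)$, $\det(\hat A^{(k)}_1)$ ($k\ge 3$) handled by Lemma~\ref{Lem7.4}(ii), do a hands-on computation for the lowest-degree block, and recover the $(m-1)\times m$ case by quotienting out the bottom determinant block. The paper's own base-case presentation is terse (it lists only a $3\times 3$ complementary basis for $B_2'\times C_3$ on $M_{2,3}$ but attributes to it the full degree-$5$ determinant), and you fill this in sensibly.

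Two small points of bookkeeping are worth noting, mostly because the paper shares the same looseness. First, your claim that passing from $B_m$ to $B_m'$ ``forces'' the first four blocks of Theorem~\ref{ModCholRep} to coalesce into a single $5\times 5$ block overstates what happens: on $W=\{a_{1\,1}=0\}$ the $A^{(1)}$ block simply vanishes (dimension drop), $\hat A^{(1)}=a_{1\,2}$ and $A^{(2)}|_W=-a_{1\,2}a_{2\,1}$ share a factor and must be merged (here is where $E_{2\,1}$ is dropped and the flag coarsened to a $3\times 3$ block with determinant $\pm a_{1\,2}a_{2\,1}a_{2\,2}$), while $\hat A^{(2)}|_W=a_{1\,2}a_{2\,3}-a_{1\,3}a_{2\,2}$ can stay as its own $2\times 2$ block since it is irreducible and coprime to $a_{1\,2}a_{2\,1}a_{2\,2}$. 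Your wholesale $5\times 5$ merge is a legitimate further coarsening (coarsening a block decomposition never breaks Proposition~\ref{Prop4.5}) and yields the same reduced product, so the computation still works; but the mechanism is the factoring of $\det(A^{(2)})$ on $W$, not the deletion of $E_{2\,1}$ per se. Second, the test matrices of~\eqref{eqn:modluirredexs} have $a_{1\,1}=1$, so they are not in $W$; you correctly say ``of the shape of'' but the actual witnesses must be adjusted to lie in $\{a_{1\,1}=0\}$ (e.g.\ put the off-diagonal $2\times 2$ swap in the upper-left corner). This is also glossed in the paper. Neither point is a real gap; your argument and the paper's are the same.
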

\begin{proof}
The proof is similar to that for Proposition \ref{Prop12.4}.  It is 
sufficient to consider the (lowest degree) representation of $G_2 = 
B_2^{\prime} \times C_{3}$ on $M_{2, 3}$, and then restrict the other 
relative coefficients determinants by evaluating those from Theorem 
\ref{ModCholRep} with $a_{1, 1} = 0$ and use Lemma \ref{Lem7.4} to see 
that they are reduced and relatively prime.  \par
  We compute the coefficient matrix using the complementary bases
$$  \{(E_{1\, 1}, 0),  (E_{2\, 2}, 0), (0, E_{2\, 2})\} \quad \makebox{to 
$\bk_{m-2}$ in $\bg_m$, and } $$
$$  \{ E_{1\, 2}, E_{2\, 1}, E_{2\, 2}\} \quad \makebox{to  $W_{m-2}$ in 
$W_{m-1}$}\, . $$
The corresponding coefficient determinant will be, up to sign, 
\begin{equation*}
a_{1\, 2}\, a_{2\, 1}\,  a_{2\, 2}\cdot (a_{1\, 2} a_{2\, 3} - a_{1\, 3} 
a_{2\, 2}) \, .\qedhere
\end{equation*}
\end{proof}
\par
The preceding involve restrictions of block representations of solvable 
linear algebraic groups.   We may also apply the Extension Property, 
Proposition \ref{prop5.3}, to extend block representations for a class of 
groups which extend both solvable and reductive groups.

\begin{Example}[Extension of a solvable group by a reductive group]
\label{ex:extension}
We consider the restriction of the bilinear form representation to the 
group
$$
G_3=\left\{\begin{pmatrix} * & 0 & 0 \\ 0 & * & * \\ 0 & * & *
\end{pmatrix}\in\GL_3(\C)\right\}
$$
and to the subspace $V_3 = \Sym_3^{(2, 2)}(\C) \subset \Sym_3(\C)$, 
consisting of matrices with upper left entry zero.  We considered the 
restriction to this subspace in Proposition \ref{Prop12.4}; however, now 
the group $G_3$ is reductive.  This representation also will play a role in 
part II in the computations for $3 \times 3$ symmetric matrix 
singularities.  A direct calculation shows that this equidimensional 
representation has coefficient determinant 
$$ - (a_{2\,2}a_{3\,3}-a_{2\,3}^2)\cdot (a_{3\, 3} a_{1\, 2}^2  -
2  a_{2\, 3} a_{1\, 2} a_{1\, 3}  +   a_{2\, 2} a_{1\, 3}^2) \, , 
$$
which defines the exceptional orbit variety as a linear free divisor on 
$V_3$.  
The second term in the product is the determinant of the $3 \times 3$ 
matrix with $a_{1\, 1} = 0$.  \par
The Extension Property, Proposition \ref{prop5.3}, now allows us to 
inductively extend the reductive group $G_3$ by a solvable group, and the 
representation to a representation of the extended group, obtaining a 
linear free divisor for the larger representation.  We again use the 
notation of \S \ref{S:CholFact}.
For $m\geq 3$, we more generally let $V_m = \Sym_m^{(m-1, m-1)}(\C) 
\subset \Sym_m(\C)$ (also the subspace considered in Proposition 
\ref{Prop12.4}).
However, the extended group
\begin{equation}
\label{Eqn12.2}
G_m=\left\{ \begin{pmatrix} A & 0 \\ B & C \end{pmatrix}\in \GL_m(\C):
A\in G_3, C\in B_{m-3}(\C)\right\}
\end{equation}
is no longer reductive (nor solvable).   We note that it is the extension of 
$G_3$ by the solvable subgroup $K_{m-3}$ consisting of elements in 
$G_m$ with $A = I$ in (\ref{Eqn12.2}).  These subgroups were used earlier 
in both \S \ref{S:CholFact} for the tower structure of $\Sym_m(\C)$ and 
also in Proposition \ref{Prop12.4}.  Then, $(G_m , V_m)$ for the bilinear 
form representation restricts to $G_m$ acting on $V_m$ form a tower of 
representations using the same inclusions (\ref{Eqn9.2a}) as earlier.  \par
\begin{Proposition}
\label{Prop12.6}
The $\{(G_m , V_m)\}$ for $m\geq 3$ form a tower of block 
representations so the exceptional orbit varieties are linear free divisors 
and their defining equations are given by 
\begin{equation}
\label{eqn:x223323}
(a_{2\,2}a_{3\,3}-a_{2\,3}^2)\cdot \prod_{j=3}^m \det(A^{(j)}_{1}) \,\, = 
\,\, 0 \, .
\end{equation}
\end{Proposition}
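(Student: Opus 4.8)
The plan is to prove this by induction on $m$, taking $m=3$ as the base case and using the Extension Property, Proposition \ref{prop5.3}, for the inductive step (equivalently, one can verify the levelwise hypotheses of Proposition \ref{Prop7.2}). For the base case I would simply observe that the coefficient determinant $-(a_{2,2}a_{3,3}-a_{2,3}^2)\cdot\det(A^{(3)}_1)$ computed in Example \ref{ex:extension} is a product of two coprime irreducible factors, hence reduced, so that $(G_3,V_3)$ is a block representation with the trivial one-block decomposition — which is exactly what condition (ii) in the definition of a tower with block structure requires at the bottom of the tower, since $V_2$ is not part of the tower.

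For the inductive step assume $m\ge 4$ and that $(G_{m-1},V_{m-1})$ is a block representation whose (reduced) coefficient determinant is the left-hand side of \eqref{eqn:x223323} for $m-1$. First I would produce the bottom piece of the new decomposition: inside $V_m=\Sym_m^{(m-1,m-1)}(\C)$ take $W:=\Sym_m^{(1,1)}(\C)$, the symmetric matrices supported on the last row and column. One checks $W$ is $G_m$-invariant because every $g\in G_m$ is block lower triangular for the splitting $\C^m=\C^{m-1}\oplus\C$ (the top-right zero block of \eqref{Eqn12.2} together with the lower-triangularity of its $C$-part force the last column of $g$ to have the form $(0,\dots,0,*)^T$; this is where $m\ge 4$ is used, as $G_3$ has a nonzero $(2,3)$-entry). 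Via the upper left $(m-1)\times(m-1)$ block one then identifies $V_m/W$ with $V_{m-1}$ as a $G_{m-1}$-representation, and finds that $K:=\ker(G_m\to\GL(V_m/W))$ consists of the $g\in G_m$ whose upper left block lies in the finite kernel of $(G_{m-1},V_{m-1})$; hence $\dim K=m=\dim W$, $G_m/K\cong G_{m-1}$, and the quotient representation is $(G_{m-1},V_{m-1})$, a block representation by the inductive hypothesis. This puts every hypothesis of Proposition \ref{prop5.3} in place except the assertion about the relative coefficient determinant of $K$ acting on $W$.

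That last point — the explicit computation of the bottom diagonal block $D$, a ``basic matrix computation'' of the kind carried out in \S\S\ref{S:MatrComps}--\ref{S:CholFact} — is the step I expect to be the \emph{main obstacle}. Using $\{E_{m,1},\dots,E_{m,m}\}$ as a basis of the Lie algebra of $K$ and $\{e_{1,m},\dots,e_{m-1,m},E_{m,m}\}$ as a basis of $W$, I would apply the bilinear-form formula \eqref{Eqn8.13}, $\xi_{E_{m,j}}(A)=E_{m,j}A+AE_{j,m}$, to see that the column of $D$ for $\xi_{E_{m,j}}$ records the $j$-th column of $A$ (with a harmless factor $2$ in its last entry), so that $D$ is essentially $A$ itself and $\det D=\det(A^{(m)}_1)$ up to a unit. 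Granting this, the reducedness and relative primality of the product $\det(A^{(m)}_1)\cdot(a_{2,2}a_{3,3}-a_{2,3}^2)\prod_{j=3}^{m-1}\det(A^{(j)}_1)$ follow just as in Theorem \ref{CholTow}(i) and Proposition \ref{Prop12.4}: each earlier factor is irreducible by the inductive hypothesis, $\partial\det(A^{(m)}_1)/\partial a_{m,m}=\det(A^{(m-1)}_1)$, and a matrix of the type \eqref{eqn:symmexample} (with $a_{1,1}$ set to $0$) exhibits a point where $\det(A^{(m-1)}_1)$ vanishes while $\det(A^{(m)}_1)$ does not, so Lemma \ref{Lem7.4}(ii) gives the irreducibility of $\det(A^{(m)}_1)$; it is distinct from the earlier factors because it genuinely involves the variables $a_{i,m}$. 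Proposition \ref{prop5.3} then yields the block representation of $(G_m,V_m)$, with bottom block $W$ and remaining blocks pulled back from $(G_{m-1},V_{m-1})$, so its coefficient determinant is \eqref{eqn:x223323}; since the identifications $V_m/W\cong V_{m-1}$ and $G_m/K\cong G_{m-1}$ are induced by the inclusions \eqref{Eqn9.2a}, condition (ii) for a tower with block structure holds, and Theorem \ref{BlkRepFrDiv} (or Theorem \ref{Towerthm}(i)) then shows the exceptional orbit varieties are linear free divisors defined by \eqref{eqn:x223323}.
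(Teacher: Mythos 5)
Your proof is correct and follows essentially the same route as the paper: both verify the levelwise hypotheses of Proposition \ref{Prop7.2} (equivalently, apply the Extension Property) with base case $(G_3,V_3)$ and bottom block $W_1=\Sym_m^{(1,1)}(\C)$, identify $K_1$ and the quotient $(G_m/K_1,V_m/W_1)\cong (G_{m-1},V_{m-1})$, and compute the relative coefficient determinant of $K_1$ on $W_1$ to be $\det(A^{(m)}_1)$ (up to a unit). You fill in the irreducibility and coprimality of $\det(A^{(m)}_1)$ via Lemma \ref{Lem7.4}(ii) in more detail than the paper, which merely asserts these; your filling-in is correct and matches the style of the analogous argument in Theorem \ref{CholTow}(i).
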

\par
\begin{proof}
To verify this claim, we apply the extension property to the entire tower 
in the form of Proposition \ref{Prop7.2}.  The first group and 
representation are $(G_3, V_3)$ which is a block representation with just 
one block.  \par
Next, we let $W_{1} =  \Sym_m^{(1, 1)}(\C) \subset V_m$.  
The kernel of the quotient representation $G_m\to\GL(V_m/W_1)$
is the product of a finite group with the subgroup $K_{1}\subset G_m$.  
Then, $G_m/K_1$ is naturally identified with $G_{m-1}$, and $V_m/W_1$ 
with $V_{m-1}$.  With these identifications, 
$G_m/K_m\to\GL(V_m/W_m)$ is isomorphic as a representation to 
$G_{m-1}\to\GL(V_{m-1})$.  This establishes ii) of Proposition  
\ref{Prop7.2}.  \par
Lastly, the coefficient determinant for $K_1$ acting on $V_m$ with 
$a_{1, 1} = 0$ is $\det(A^{(m)}_{1})$.  As this is not identically zero, 
$K_1$ has a relatively open orbit.  Also, this polynomial is irreducible and 
relatively prime to the coefficient determinant for $G_m/K_1$.
Thus, ii) of Proposition \ref{Prop7.2} follows and the claim for $(G_m, 
V_m)$ follows.  
\end{proof}
\end{Example}
\par
It appears that linear free divisors can often be extended to larger linear 
free divisors using an extension of the original group by a solvable
group.  For more examples see \cite[\S 5.3]{P}.


\begin{thebibliography}{GMNS}
\bibitem[BBW]{BBW} Benner, P., Byers, R.,  Fassbender, H., Mehemann, V.,
and Watkins, D. {\em Cholesky-like Factorizations for Skew-Symmetric
Matrices}, Electron. Trans. Numer. Anal. {\bf 11} (2000), 85--93

\bibitem[Bo]{Bo} Borel, A. {\em Linear Algebraic Groups}, Second Edition,
Graduate Texts in Mathematics {\bf vol 126}, Springer-Verlag Publ., 1991

\bibitem[Br]{Br}  Bruce, J.W. {\em Families of symmetric matrices}
Moscow Math. Jour  {\bf3 no. 2}, Special Issue in honor of V. I. Arnol\rq d,
 (2003) 335--360

\bibitem[BrT]{BrT}  Bruce, J.W. and Tari, F. {\em Families of square 
matrices}, Proc. London Math. Soc. {\bf 89} (2004), 738--762.

\bibitem[BE]{BE}  Buchsbaum, D. and Eisenbud, D.   {\em  Algebraic
Structures for Finite Free Resolutions and Some Structure Theorems for
Ideals of Codimension 3}, Amer. Jour. Math {\bf 99} (1977), 447--485

\bibitem[BM]{BM}  Buchweitz, R. O. and Mond, D. {\em Linear Free Divisors
and Quiver Representations} in {\rm Singularities and Computer Algebra}
London math. Soc. Lect. Ser.  {\bf vol 324}  Cambridge Univ. Press, 2006,
41--77

\bibitem[Bh]{Bh} Burch, L. {\em On ideals of finite homological dimension
in local rings} Math. Proc. Camb. Phil. Soc. {\bf 64} (1968), 941-948.

\bibitem[D1]{D1} Damon, J.  {\em Higher Multiplicities and Almost Free
Divisors and Complete Intersections} Memoirs Amer. Math. Soc.
{\bf 123 no 589} (1996)

\bibitem[D2]{D2} \bysame  {\em On the Legacy of Free Divisors I:
Discriminants and Morse--Type Singularities}, Amer. Jour. Math. {\bf 120 
no.
3} (1998) 453Ð492.

\bibitem[D3]{D3} \bysame  {\em On the Legacy of Free Divisors II: Free*
Divisors and Complete Intersections},  Moscow Math. Journal {\bf 3 no. 2},
Special Issue in honor of V. I. Arnol\rq d, (2003) 361--395

\bibitem[DM]{DM}  Damon, J. and Mond, D. {\em $\cA$--codimension and
the vanishing topology of discriminants}  Invent. Math. {\bf 106} (1991),
217--242

\bibitem[DP]{DP} Damon, J. and Pike, B. {\em Solvable Groups, Free
Divisors and Nonisolated Matrix Singularities II: Vanishing Topology}, 
Geometry \& Topology {\bf 18} (2014) 911--962

\bibitem[DP2]{DP2}   \bysame  {\em Solvable Group Representations and
Free Divisors whose Complements are $K(\pi, 1)$\lq s}.  Top. and its Appl. 
{\bf 159(2)} (2012) 437--449

\bibitem[Dm]{Dm}  Demmel, J.W. {\em Applied Numerical Linear Algebra},
SIAM publ., 1997

\bibitem[DW]{DW}  Dress, A. and Wenzel, W {\em A simple proof of an
identity concerning Pfaffians of skew symmetric matrices}, Adv. Math.
{\bf 112 no 1} (1995) 120--134

\bibitem[EN]{EN}  Eagon, J.A. and Northcott, D.   {\em Ideals defined by
matrices and a certain complex associated with them}  Proc. Roy. Soc.
London {\bf 299} (1967), 147--172

\bibitem[Fr]{Fr} Fr{\"u}hbis-Kr{\"u}ger, Anne, {\em Classification of 
simple space curve singularities}, Comm. Algebra {\bf 27 no. 8} (1999), 
3993--4013

\bibitem[FN]{FN} Fr\"{u}hbis-Kr\"{u}ger, A. and Neumer, A. {\em Simple 
Cohen--Macaulay Codimension $2$ Singularities} Comm. Algebra
{\bf 38 no. 2}  (2010) 454--495

\bibitem[GM]{GM} Goryunov, V. and Mond, D.  {\em Tjurina and Milnor
Numbers of Matrix Singularities}  J. London Math. Soc. {\bf 72 (2)} (2005),
205--224

\bibitem[GZ]{GZ} Goryunov, V. and  Zakalyukin, V. M. {\em Simple 
symmetric matrix singularities and the subgroups of Weyl groups 
$A_{\mu}$, $D_{\mu}$, $E_{\mu}$ }, Moscow Math. Jour. {\bf 3} Moscow 
Math. Jour  {\bf3 no. 2}, Special Issue in honor of V. I. Arnol\rq d, (2003) 
507--530

\bibitem[GMNS]{GMNS} Granger, M., Mond, D., Nieto-Reyes, A., and  Schulze,
M. {\em Linear Free Divisors and the Global Logarithmic Comparison
Theorem}, Annales Inst. Fourier. {\bf 59} (2009) 811--850.

\bibitem[GMS]{GMS} Granger, M., Mond, D., and Schulze,
M. {\em Free divisors in prehomogeneous vector spaces},
Proc. Lond. Math. Soc. (3). {\bf 102 (5)} (2011) 923--950.

\bibitem[Ha]{Ha} Haslinger, G. {\em Families of Skew-Symmetric 
Matrices}, Ph.D. Thesis, University of Liverpool, 2001

\bibitem[Hi]{Hi} Hilbert, D. {\em \"{U}ber die Theorie von Algebraischen
Formen}  Math. Annalen {\bf 36} (1890), 473--534.

\bibitem[K]{K}  Kimura, T. {\em Introduction to Prehomogeneous Vector
Spaces}, Transl. Math. Mono., {\bf vol. 215}, Amer. Math. Soc.,  2003,

\bibitem[LGr]{LGr} L\^e, D.T. and Greuel, G. M. {\em Spitzen, Doppelpunkte 
und vertikale Tangenten in der Diskriminante verseller Deformationen von 
vollst\"andigen Durchschnitten} Math. Ann. {\bf 222} (1976), 71-88

\bibitem[L]{L}   Looijenga, E.J.N.  {\em Isolated singular points on complete
intersections}  London Math. Soc. Lecture Notes in Math. {\bf 77},
Cambridge Univ. Press (1984)

\bibitem[Mc]{Mc}  Macaulay,  F.S. {\em The algebraic theory of modular
systems} Cambridge Tracts {\bf 19}, Cambridge Univ. Press, 1916

\bibitem[Mi]{Mi} Milnor, J. {\em Singular points on complex hypersurfaces}
Ann. Math. Studies {\bf 61}, Princeton University Press, 1968.

\bibitem[MM]{MM} Muir, T., and Metzler, W. {\em A treatise on the theory
of determinants} Priv. Pub., New York, 1930

\bibitem[P]{P} Pike, B., Ph.D.~Thesis, Dept. of Mathematics, University of
North Carolina, 2010. 
Available at \url{http://dc.lib.unc.edu/cdm/ref/collection/etd/id/3212}

\bibitem[Sa]{Sa} Saito, K. {\em Theory of logarithmic differential forms
and logarithmic vector fields} J. Fac. Sci. Univ. Tokyo Sect. Math. {\bf 27}
(1980), 265--291.

\bibitem[So]{So}  Sato, M. {\em Theory of prehomogeneous vector spaces
(algebraic part)} --English translation of Sato's lecture from notes by T.
Shintani,\, Nagoya Math. J. \textbf{120} (1990), 1--34

\bibitem[SK]{SK} Sato, M. and Kimura, T. {\em A classification of
irreducible prehomogeneous vector spaces and their relative invariants},
Nagoya Math. Jour. {\bf 65}, (1977) 1--155

\bibitem[Sh]{Sh} Schaps, M. {\em Deformations of Cohen--Macaulay
schemes of codimension 2 and nonsingular deformations of space curves}
Amer. Jour. Math. {\bf 99} (1977), 669--684.

\end{thebibliography}
\end{document}